\documentclass[a4paper, reqno]{amsart}
\usepackage{graphicx}
\usepackage[bookmarksnumbered,colorlinks,plainpages]{hyperref}

\usepackage{float}
\usepackage{amsthm}
\usepackage[all,2cell]{xy}
\usepackage{amsfonts}
\usepackage{amsmath}
\usepackage{amssymb}
\usepackage{xypic,leqno,amscd,amssymb,pstricks,latexsym,amsbsy,xypic,mathrsfs,verbatim}
\usepackage{multirow}
\usepackage{booktabs} 
\usepackage{makecell} 

\usepackage{etex}
\usepackage{mathtools}

\usepackage{tikz}
\usetikzlibrary{positioning,arrows}
\usetikzlibrary{decorations.pathreplacing}

\UseAllTwocells \SilentMatrices
\setlength{\textheight}{23cm} \setlength{\textwidth}{16.4cm}
\setlength{\oddsidemargin}{0cm}  \setlength{\evensidemargin}{0cm}
\setlength{\topmargin}{0cm}
\usepackage{amscd,amssymb,pstricks,latexsym,amsbsy,xypic,mathrsfs,verbatim}
\usepackage[all]{xy}
\usepackage{lscape}
\usepackage{enumerate}
\usepackage{graphicx}
\usepackage{bm}
\usepackage{amsmath}
\usepackage{xypic,leqno,amscd,amssymb,pstricks,latexsym,amsbsy,xypic,mathrsfs,verbatim}
\usepackage{multirow}
\usepackage{booktabs} 
\usepackage{makecell}
\usepackage{mathrsfs}

\makeatletter
\@namedef{subjclassname@2010}{%
  \textup{2020} Mathematics Subject Classification}

\numberwithin{equation}{section}

\theoremstyle{plain}
\newtheorem{prop}{Proposition}[section]
\newtheorem{thm}[prop]{Theorem}
\newtheorem{cor}[prop]{Corollary}
\newtheorem{lem}[prop]{Lemma}
\newtheorem{defn}[prop]{Definition}

\theoremstyle{definition}
\newtheorem{exam}[prop]{Example}
\newtheorem{rem}[prop]{Remark}

\def\End{{{\rm End}\,}}
\def\Hom{{{\rm Hom}\,}}

\def\ind{{{\rm ind}\,}}
\def\add{{{\rm add}\,}}
\def\im{{{\rm im}\,}}
\def\soc{{{\rm soc}\,}}
\def\top{{{\rm top}\,}}
\def\Ext{{{\rm Ext}\,}}

\def\D{{{\rm D}\,}}

\def\DHom{{{\rm DHom}\,}}
\def\mod{{\text{\rm mod}}}

\def\cal{\mathcal}

\def\bbZ{{\mathbb Z}}

\def\bbL{{\mathbb L}}

\def\co{{\mathcal O}}
\def\vx{{\vec{x}}}

\def\vc{{\vec{c}}}

\def\coh{{\rm coh\,}}
\def\vect{{\rm vect\,}}
\def\X{{\mathbb X}}

\def\add{{\rm add}}
\def\deg{{\rm deg}}
\def\rank{{\rm rank}}

\setlength{\footskip}{1cm}

 \def\lz{\lambda}

\def\End{{{\rm End}\,}} 
\def\Hom{{{\rm Hom}\,}}

\def\soc{{{\rm soc}\,}} 
\def\top{{{\rm top}\,}}
\def\Ext{{{\rm Ext}\,}}

\def\D{{{\rm D}\,}}

\def\DHom{{{\rm DHom}\,}}
\def\mod{{\text{\rm mod}\,}}

\def\cal{\mathcal}

\def\bbZ{{\mathbb Z}}

\def\bbL{{\mathbb L}}

\def\add{{\rm{add}}}

\def\co{{\mathcal O}}
\def\vx{{\vec{x}}}

\def\vc{{\vec{c}}}

\def\coh{{\rm coh\,}}
\def\vect{{\rm vect\,}}
\def\X{{\mathbb X}}

\begin{document}

\title[{Stability approach to torsion pairs on abelian categories}]
{Stability approach to torsion pairs on abelian categories}

\author[Mingfa Chen, Yanan Lin, Shiquan Ruan] {Mingfa Chen, Yanan Lin, Shiquan Ruan$^*$}

\address{Mingfa Chen, Yanan Lin and Shiquan Ruan;\; School of Mathematical Sciences, Xiamen University, Xiamen, 361005, Fujian, P.R. China.}
\email{mingfachen@stu.xmu.edu.cn, ynlin@xmu.edu.cn, sqruan@xmu.edu.cn}

\thanks{$^*$ the corresponding author}

\keywords{Stability data; torsion pair; weighted projective
line; elliptic curve; tube category}

\date{\today}
\makeatletter \@namedef{subjclassname@2020}{\textup{2020} Mathematics Subject Classification} \makeatother
\subjclass[2010]{18E40, 18E10, 16G20, 16G70, 14F06.}

\begin{abstract}
In this paper we introduce a local-refinement procedure to investigate stability data on an abelian category, and provide a sufficient and necessary condition for a stability data to be finest.
We classify all the finest stability data for the categories of coherent sheaves over certain weighted projective curves, including the classical projective line, smooth elliptic curves and certain weighted projective lines. As applications, we obtain a classification of torsion pairs for these categories via stability data approach. As a by-product, a new proof for the classification of torsion pairs in any tube category is also provided.
\end{abstract}

\maketitle
\section{Introduction}

Torsion pairs in abelian categories were introduced by Dickson \cite{Dickson}, which become increasingly important in a wide range of  research areas.
Mizuno and Thomas \cite{MT} found a close relationship between
$c$-sortable elements of a Coxeter group and torsion pairs, in terms of the representation theory of preprojective algebras, and described the cofinite torsion classes in the context of the Coxeter group.
Tattar \cite{Tat} defined torsion pairs for quasi-abelian categories and gave several characterizations, he
showed that many of the torsion theoretic concepts translate from abelian categories to quasi-abelian categories.

The wall and chamber structure of a module category was introduced by Bridgeland
in \cite{TB2} to give an algebraic interpretation of scattering diagrams studied in mirror symmetry by Gross, Hacking, Keel and Kontsevich, see \cite{GHKK}.
It is shown in \cite{BST1} that all functorially finite torsion classes of an algebra can be obtained from its wall and chamber structure. Asai \cite{Asai} proved that chambers coincide with the so-called TF equivalence classes (defined via numerical torsion pairs) by using Koenig-Yang correspondences in silting theory. Research highlights about torsion pairs are also studied in \cite{Ban,BL,BP,Enom,Ja,PaSV}.

The notion of stability data arising from Geometric Invariant Theory was introduced by Mumford  \cite{Mum} in order to construct the moduli spaces of vector bundles on an algebraic curve.
This new approach was adapted by a great deal of mathematicians
to different branches of mathematics.
Such is the case of Schofield,
who did it for quiver representations in \cite{Schofi}; King, for representation of finite
dimensional algebras in \cite{King}; Rudakov, for abelian categories in \cite{Rudak}; and
Bridgeland, for triangulated categories in \cite{TB1}.
The stability data is generalized to t-stability in \cite{GKR}, which provides an effective approach to classify the bounded t-structures on the derived categories of coherent sheaves
on the classical projective line and on an smooth elliptic curves.

The stability data on an abelian category has a close relation with torsion theory. Namely, for an abelian category $\mathcal{A}$,
any torsion pair gives a stability data; and for any stability data on $\mathcal{A}$, there exists a family of torsion pairs, c.f. \cite{BST2}. More precisely, any stability data induces a
chain of torsion classes in $\mathcal{A}$.
A finite non-refinable increasing chain of torsion classes starting with the zero class and ending in $\mathcal{A}$ is called a maximal green sequence in $\mathcal{A}$.
Br$\rm{\ddot{u}}$stle-Smith-Treffinger \cite{BST2} characterized which stability data induce maximal green sequences in $\mathcal{A}$.

The stability data have played a key role in the calculations of Donaldson-Thomas invariants, which have deep implications in algebraic geometry and theoretical physics.
As is described in \cite{TB3}, generating functions for Donaldson-Thomas invariants can be deduced from certain factorisation of a distinguished element $e_\mathcal{A}$ in the completed Hall algebra associated to $\mathcal{A}$ via an integration map.
The equalities between different factorisations of $e_\mathcal{A}$ induced by distinct stability data are known under the name of wall-crossing formulas.

In the present paper we study the finest stability data on abelian categories and apply them to the classification of torsion pairs.
We define a partial ordering for the set of stability data on an abelian category, minimal elements with respect to this partial ordering will be called the finest stability data.
We give a sufficient and necessary condition to determine when a
stability data is finest.
We classify all the finest stability data for the categories of coherent sheaves over certain weighted projective curves, including the classical projective line, smooth elliptic curves and the weighted projective line of weight type (2).
As an application, we obtain the classification of torsion pairs for these categories.
Moreover, for a nilpotent representation category of a cyclic quiver (i.e., a tube category), Baur-Buan-Marsh \cite{BBM} classified the torsion pairs by using maximal rigid objects in the extension of the tube category. We provide a new proof via stability data approach.

This paper is organized as follows.
In Section 2, we recall the definition and some basic results of  stability data on an abelian category $\cal{A}$ in the sense of \cite{GKR}, and establish the connections between stability data and torsion pairs.
In Section 3, we introduce a partial order relation on the set of stability data. We describe the finest stability data, and introduce a procedure to refine the stability data locally.
In Section 4, we investigate the finest stability data on a tube category, and obtain the classification of torsion pairs via the stability data approach.
In Section 5, we show that each stability data can be refined to a finest one for tame hereditary algebras.
The classification of finest stability data for the projective line and elliptic curves are given in Section 6 and Section 7 respectively.
In the final Section 8, we investigate the stability data for weighted projective lines. In particular, we obtain classification results for finest stability data and torsion pairs for weight type (2).

\emph{Notation.} Throughout this paper, let $\textbf{k}$ be an algebraically closed field, let $\mathcal{A}$ be an abelian category.
All subcategories are assumed to be full and closed under isomorphisms.
For $X, Y \in \mathcal{A}$, we simply denote $\Hom(X, Y):= \Hom_{\mathcal{A}}(X, Y)$ and $\Ext^{i}(X, Y):= \Ext^{i}_{\mathcal{A}}(X, Y)$ for $i\geq 1$.
Given a set $\mathcal{S}$ of objects in $\mathcal{A}$, we write $\langle \mathcal{S} \rangle$ for the subcategory of $\mathcal{A}$ generated by the objects in $\mathcal{S}$ closed \emph{under extensions and direct summands}, and denote by $\add~ \mathcal{S}$ the subcategory of $\mathcal{A}$ consisting of direct summands of finite direct sums of objects in $\mathcal{S}$.
For two subcategories $\cal{T}, \cal{F}$ in $\mathcal{A}$, we denote $\cal{T}^{\perp}:=\{Y\in \mathcal{A} \ |\ \Hom(X, Y)=0, \forall\ X \in \cal{T} \}$ and ${}^{\perp}\cal{F}:=\{X\in \mathcal{A} \ |\ \Hom(X, Y)=0, \forall\ Y \in \cal{F} \}$.
For any $n\in\mathbb{N}$, the set $\Phi =\{i\,|\,1\leq i\leq n\}$ is always viewed as a linearly ordered set in the sense that $1<2<\cdots<n$.

\section{Preliminary}

In this section, we recall the definitions of stability data and torsion pair for an abelian category, and explain the close relations between them.

\subsection{Stability data}
The notion of stability data on an abelian category $\cal{A}$ is introduced by Gorodentsev-Kuleshov-Rudakov in \cite{GKR}.
The most important feature of a stability data is the fact that they create a distinguished subclass of objects in $\cal{A}$ called semistable factors.

\begin{defn}{\rm(}\cite[Def. 2.4]{GKR}{\rm)}\label{defn of stab data}
Suppose that $\cal{A}$ is an abelian category, $\Phi$ is a linearly ordered set, and an extension-closed subcategory
$\Pi_\varphi\subset\cal{A}$ is given for every $\varphi\in\Phi$. A pair
$(\Phi,\{\Pi_\varphi\}_{\varphi\in\Phi})$ is called a \emph{stability data} if
\begin{itemize}
   \item [(1)] ${\rm{Hom}}(\Pi _{\varphi'},\Pi_{\varphi''})=0$ for all $\varphi'>\varphi''$ in $\Phi$;
  \item [(2)] every non-zero object $X\in\mathcal{A}$  has a Harder-Narasimhan filtration
\begin{equation}\label{HN-filt2.1}
\xymatrix @C=6.55em@R=10ex@M=4pt@!0{ 0=X_{0}\ar@{^(->}[r]^-{p_{0}} &X_1\ar@{->>}[d]^{q_{1}}\ar@{^(->}[r]^-{p_{1}}&X_{2}
\ar@{->>}[d]^-{q_{2}}\ar@{^(->}[r]^-{p_{2}}
&\cdots\ar@{^(->}[r]^-{p_{n-2}}&X_{n-1}\ar@{->>}[d]^-{q_{n-1}}
\ar@{^(->}[r]^-{p_{n-1}}& X_n=X\ar@{->>}[d]^{q_{n}}&\\
& A_1&A_{2}&&A_{n-1}&A_{n}\\ }
\end{equation}
with non-zero factors $A_{i}=X_{i}/ X_{i-1}\in\Pi_{\varphi_i}$ and strictly decreasing $\varphi _{i}>\varphi_{i+1}.$
\end{itemize}
\end{defn}

The
categories $\Pi _\varphi$ are called the semistable subcategories of
the stability data $(\Phi,\{\Pi_\varphi\}_{\varphi\in\Phi})$. The nonzero objects in $\Pi _\varphi$ are
said to be semistable of \emph{phase} $\varphi$, while the minimal objects
are said to be stable.
The filtration $\eqref{HN-filt2.1}$ is called the Harder-Narasimhan filtration (\emph{HN-filtration} for short) of $X$, which is unique up to isomorphism. The factors $A_{i}$' s are called the \emph{HN-factors} of $X$, where
$A_1$ is called the \emph{maximal semistable subobject} of $X$, and $A_n$ is called the \emph{minimal semistable quotient object} of $X$.
Define $\bm \phi^{+}(X):=\varphi_1$ and $\bm \phi^{-}(X):=\varphi_n$. Then $X\in\Pi_{\varphi}$ if and
only if {{$\bm \phi^{+}(X)=\bm \phi^{-}(X)=\varphi=:\bm \phi(X)$.}}
For simplification, we often omit the trivial exact sequence  $0\rightarrow X_0\stackrel{0}\longrightarrow  X_1\stackrel{q_{1}}\longrightarrow  A_1\rightarrow0$ in $(2.1)$ in the following.

For an extension-closed subcategory $\cal{B}$ of $\cal{A}$, if there exists $\Pi_\varphi\subset\cal{B}$ for any $\varphi\in\Phi$ satisfying Definition
\ref{defn of stab data} (1) and \eqref{HN-filt2.1} for any non-zero object $X\in\mathcal{B}$, then
we call the pair $(\Phi,\{\Pi_\varphi\}_{\varphi\in\Phi})$ a \emph{local stability data} on $\cal{B}$.

The following observation is important for our later use.

\begin{lem} \label{maxsubss}
Keep the notations in (\ref{HN-filt2.1}), then for each $i$,
\begin{itemize}
  \item [(1)] $X_{i+1}/X_i$ is the maximal semistable subobject of $X/X_i$;
  \item [(2)] $A_i$ is the minimal semistable quotient object of $X_i$.
\end{itemize}
\end{lem}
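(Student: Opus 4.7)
The plan is to exhibit, for each $i$, an HN-filtration of $X_i$ and an HN-filtration of $X/X_i$ obtained by truncating or quotienting the given HN-filtration \eqref{HN-filt2.1} of $X$, and then appeal to the uniqueness statement in Definition~\ref{defn of stab data} together with the naming conventions that $A_1$ of any HN-filtration is the maximal semistable subobject and $A_n$ is the minimal semistable quotient object.

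For part (2), the truncated chain
\[
0 = X_0 \subset X_1 \subset \cdots \subset X_{i-1} \subset X_i
\]
has the same factors $A_1,\dots,A_i$ as \eqref{HN-filt2.1}, hence each $A_j$ lies in $\Pi_{\varphi_j}$ and the phases satisfy $\varphi_1 > \varphi_2 > \cdots > \varphi_i$. This is therefore an HN-filtration of $X_i$, and by its uniqueness the last factor $A_i = X_i / X_{i-1}$ is, by definition, the minimal semistable quotient object of $X_i$.

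For part (1), I would use the third isomorphism theorem on the chain
\[
0 = X_i/X_i \subset X_{i+1}/X_i \subset X_{i+2}/X_i \subset \cdots \subset X_n/X_i = X/X_i,
\]
whose successive quotients are $(X_{j+1}/X_i)/(X_j/X_i) \cong X_{j+1}/X_j = A_{j+1} \in \Pi_{\varphi_{j+1}}$ for $j = i, \dots, n-1$. Since the phases $\varphi_{i+1} > \varphi_{i+2} > \cdots > \varphi_n$ remain strictly decreasing, this chain is an HN-filtration of $X/X_i$. By uniqueness, its first factor $X_{i+1}/X_i$ is the maximal semistable subobject of $X/X_i$.

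Honestly, this lemma is almost formal: both statements reduce to the uniqueness of the HN-filtration once one writes down the obvious candidate filtrations. The only point that requires a little care is checking that the factors of the quotient chain in part (1) are genuinely the $A_{j+1}$ — which is the standard isomorphism theorem in an abelian category — and that a subobject inclusion in an abelian category yields a well-defined quotient filtration. There is no real obstacle beyond careful bookkeeping of indices and phases.
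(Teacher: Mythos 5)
Your proof is correct, but it takes a genuinely different route from the paper's. You exhibit the truncated chain $0\subset X_1\subset\cdots\subset X_i$ (resp.\ the image chain $0\subset X_{i+1}/X_i\subset\cdots\subset X/X_i$, via the third isomorphism theorem) as an HN-filtration of $X_i$ (resp.\ of $X/X_i$), and then invoke uniqueness of HN-filtrations together with the naming convention under which ``minimal semistable quotient object'' just means the last HN-factor. The paper instead proves (2) by contradiction (and takes (1) as dual): supposing $X_i$ had a non-zero semistable quotient $Y$ with $\bm{\phi}(Y)<\bm{\phi}(A_i)$, it gets $\Hom(A_j,Y)=0$ for all $j\leq i$, then climbs the short exact sequences $0\to X_{j-1}\to X_j\to A_j\to 0$ to conclude $\Hom(X_i,Y)=0$, contradicting that $Y$ is a quotient of $X_i$. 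Your argument is shorter and purely formal, but it leans entirely on the uniqueness statement, which the paper asserts without proof; the paper's argument is self-contained modulo the Hom-vanishing axiom and, as a bonus, yields the intrinsic characterization that \emph{every} non-zero semistable quotient of $X_i$ has phase at least $\bm{\phi}(A_i)$ --- which is the form of the statement actually used in later sections (e.g.\ in the tube arguments of Section 4). Both approaches are acceptable.
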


\begin{proof} {{We only prove the second statement, since the proof for the first one is dual. Assume there is a non-zero semistable quotient object $Y$ of $X_i$ with $\bm \phi(Y)<\bm \phi(A_i)$. Then $\Hom(A_j, Y)=0$ for any $j\leq i$ since $\bm \phi(Y)<\bm \phi(A_i)\leq \bm \phi(A_j)$. Now applying $\Hom(-, Y)$ to the exact sequence $0\to X_{j-1}\xrightarrow{p_{j-1}} X_{j}\xrightarrow{q_{j}} A_{j}\to 0$ for $1<j\leq i$, we obtain that $\Hom(X_{j}, Y)=0$ for any $j\leq i$. But $Y$ is a quotient object of $X_i$, we have $\Hom(X_i, Y)\neq 0$, a contradiction.}}
\end{proof}

\begin{rem} We can obtain the HN-filtration of $X$ by considering the maximal semistable subobject of $X/X_i$ step by step for $i=0, 1, \cdots, n$, or by considering the minimal semistable quotient object of $X_i$ step by step for $i=n, n-1, \cdots, 0$.
\end{rem}

The following lemma shows that each semistable subcategory is closed direct summands.
\begin{lem}
Let $(\Phi,\{\Pi_\varphi\}_{\varphi\in\Phi})$ be a stability data on $\cal{A}$. Then
$$\Pi_\varphi= \big(\bigcap\limits_{\psi>\varphi}\Pi_\psi ^{\perp}\big)\cap \big(\bigcap\limits_{\psi<\varphi} {}^{\perp}\Pi_\psi \big),$$  where
$$\Pi_\psi ^{\perp}=\{X\in \cal{A}\,|\,\Hom(A, X)=0, \;\forall\, A\in \Pi_\psi\}\quad \text{and}\quad {}^{\perp}\Pi_\psi =\{X\in \cal{A}\,|\,\Hom(X, A)=0, \;\forall\, A\in \Pi_\psi\}.$$
Consequently, $\Pi_\varphi$ is closed under direct summands.
\end{lem}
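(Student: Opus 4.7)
The plan is to show the two inclusions separately and then deduce the corollary from a general fact about perpendicular subcategories.

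For the forward inclusion $\Pi_\varphi \subseteq \big(\bigcap_{\psi>\varphi}\Pi_\psi^{\perp}\big)\cap\big(\bigcap_{\psi<\varphi}{}^{\perp}\Pi_\psi\big)$, I would argue directly from axiom (1) of Definition \ref{defn of stab data}: if $X\in\Pi_\varphi$ and $\psi>\varphi$, then $\Hom(A,X)=0$ for all $A\in\Pi_\psi$, so $X\in\Pi_\psi^{\perp}$; the case $\psi<\varphi$ is symmetric. This step is immediate and requires no filtration machinery.

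The reverse inclusion is the substantive direction and is where I would use the HN-filtration together with Lemma \ref{maxsubss}. Take a non-zero $X$ lying in the right-hand side and consider its HN-filtration with factors $A_i\in\Pi_{\varphi_i}$, $\varphi_1>\varphi_2>\cdots>\varphi_n$. By Lemma \ref{maxsubss}, $A_1$ sits inside $X$ as the maximal semistable subobject, providing a non-zero morphism $A_1\hookrightarrow X$; hence the assumption $X\in\bigcap_{\psi>\varphi}\Pi_\psi^{\perp}$ forces $\varphi_1\leq\varphi$. Dually, $A_n$ is a quotient of $X$, so there is a non-zero surjection $X\twoheadrightarrow A_n$, and $X\in\bigcap_{\psi<\varphi}{}^{\perp}\Pi_\psi$ forces $\varphi_n\geq\varphi$. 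Since $\varphi_1\geq\varphi_n$ with equality iff $n=1$, we obtain $\varphi\geq\varphi_1\geq\varphi_n\geq\varphi$, so $n=1$ and $X=A_1\in\Pi_\varphi$.

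The corollary about closure under direct summands then follows formally: for any object $A$ in $\mathcal{A}$, the functor $\Hom(A,-)$ (respectively $\Hom(-,A)$) is additive, so each $\Pi_\psi^{\perp}$ and each ${}^{\perp}\Pi_\psi$ is closed under direct summands, and an intersection of subcategories closed under direct summands inherits this property. I do not anticipate a serious obstacle; the only point requiring care is to invoke Lemma \ref{maxsubss} correctly to guarantee the existence of the non-zero embedding $A_1\hookrightarrow X$ and the non-zero surjection $X\twoheadrightarrow A_n$ needed to activate the two orthogonality conditions.
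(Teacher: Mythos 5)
Your proposal is correct and follows essentially the same route as the paper: the forward inclusion is immediate from axiom (1), and the reverse inclusion uses the HN-filtration to show $\bm\phi^{+}(X)\leq\varphi\leq\bm\phi^{-}(X)$, forcing the filtration to have length one. (The appeal to Lemma \ref{maxsubss} is not even needed, since $A_1=X_1$ is already a subobject of $X$ and $A_n=X/X_{n-1}$ a quotient by the definition of the filtration; your explicit additivity argument for the direct-summand corollary is the implicit content of the paper's ``consequently.'')
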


\begin{proof}
By Definition \ref{defn of stab data} (1), we have ${\rm{Hom}}(\Pi _{\varphi'},\Pi_{\varphi''})=0$ for all $\varphi'>\varphi''$ in $\Phi$. Hence $\Pi_\varphi\subseteq \Pi_\psi ^{\perp}$ for any $\psi>\varphi$, and $\Pi_\varphi\subseteq {}^{\perp}\Pi_\psi$ for any $\psi<\varphi$. Therefore, $\Pi_\varphi\subseteq \big(\bigcap\limits_{\psi>\varphi}\Pi_\psi ^{\perp}\big)\cap \big(\bigcap\limits_{\psi<\varphi} {}^{\perp}\Pi_\psi \big)$.

On the other hand, for any $X\in \big(\bigcap\limits_{\psi>\varphi}\Pi_\psi ^{\perp}\big)\cap \big(\bigcap\limits_{\psi<\varphi} {}^{\perp}\Pi_\psi \big)$.
Since $\Hom(\Pi_\psi, X)=0$ for any $\psi>\varphi$, we have $\bm \phi^{+}(X) \leq \varphi$.
Similarly, since $\Hom(X, \Pi_\psi)=0$ for any $\psi<\varphi$, we have $ \bm \phi^{-}(X)\geq\varphi $.
Hence $\bm \phi^{-}(X) =\bm \phi^{+}(X) = \varphi$, and then $X\in \Pi_\varphi$.
Therefore, $\Pi_\varphi\supseteq \big(\bigcap\limits_{\psi>\varphi}\Pi_\psi ^{\perp}\big)\cap \big(\bigcap\limits_{\psi<\varphi} {}^{\perp}\Pi_\psi \big)$.
We are done.
\end{proof}

Given a set $\mathcal{S}$ of objects in $\mathcal{A}$, recall that
$\langle\mathcal{S}\rangle$ denotes the
subcategory of $\mathcal{A}$ generated by the objects in $\mathcal{S}$ closed under extensions and direct summands.
For any interval $I\subseteq \Phi$,
define $\Pi_{I}:=\langle\Pi_{\varphi}\,|\,\varphi\in I\rangle$ to be the subcategory of $\mathcal{A}$ generated by $\Pi_{\varphi}$ for all $\varphi\in I$. Note that non-zero objects in $\Pi_{I}$ consists precisely of those objects  $X\in\cal{A}$ which satisfy $\bm \phi^{\pm}(X)\in I$.

\subsection{Torsion pair}

The notion of torsion pair in an abelian category was first introduced by Dickson \cite{Dickson}, generalizing properties of abelian groups of finite rank. We recall the definition as
follows.

\begin{defn}
A pair $(\cal{T}, \cal{F})$ of subcategories in an abelian category $\cal{A}$ is a \emph{torsion pair} if the following conditions are satisfied:

\begin{itemize}
 \item [(1)]    $\Hom(\cal{T}, \cal{F})=0$;
 \item [(2)] for any $Z\in\cal{A}$, there is an exact sequence $0\to X\to Z\to Y\to 0$ with $X\in\cal{T}, Y\in\cal{F}$.
 \end{itemize}
\end{defn}

By definition, there are two trivial torsion pairs in $\cal{A}$, namely, $(\cal{A},0)$ and $(0, \cal{A})$. All the other torsion pairs are called non-trivial.
A pair $(\cal{T}, \cal{F})$ in $\cal{A}$ is called a \emph{tilting torsion pair} if there is a tilting object $T$ (c.f. \cite{CF}) such that
 $$\cal{T}= \{ E \in \cal{A} \ | \ \Ext^{1}(T, E) = 0\}, \quad \cal{F}= \{ E \in \cal{A} \ | \ \Hom(T, E) = 0 \}.$$

The following result is well-known, see for example
  \cite[Prop. 3.3]{Dickson}, \cite[Lem. 1.1.3]{Po} and \cite[Prop. 5.14]{Tat}.
\begin{prop}\label{torsion pair on Noetherian abelian}
 Let $\mathcal{A}$ be an abelian category. Then a pair $(\cal{T}, \cal{F})$ of subcategories in $\mathcal{A}$ is a torsion pair if and only if $\cal{T} = {}^{\bot}\cal{F}$ and $\cal{F} = \cal{T}^{\bot}$.
\end{prop}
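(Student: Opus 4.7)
The plan is to verify both implications of the biconditional separately, exploiting the universal properties of the perpendicular operations.

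For the forward direction, suppose $(\mathcal{T}, \mathcal{F})$ is a torsion pair. Axiom (1) immediately yields $\mathcal{T} \subseteq {}^{\perp}\mathcal{F}$ and $\mathcal{F} \subseteq \mathcal{T}^{\perp}$, so the substance lies in the reverse inclusions. Given $X \in {}^{\perp}\mathcal{F}$, apply axiom (2) to obtain a short exact sequence $0 \to X_\mathcal{T} \to X \to X_\mathcal{F} \to 0$ with $X_\mathcal{T} \in \mathcal{T}$ and $X_\mathcal{F} \in \mathcal{F}$. The epimorphism $X \twoheadrightarrow X_\mathcal{F}$ lies in $\Hom(X, X_\mathcal{F})$, which vanishes since $X \in {}^{\perp}\mathcal{F}$; being simultaneously zero and epi, it forces $X_\mathcal{F} = 0$, whence $X \cong X_\mathcal{T} \in \mathcal{T}$. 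The dual argument, applied to $X \in \mathcal{T}^{\perp}$ and the monomorphism $X_\mathcal{T} \hookrightarrow X$, gives $\mathcal{T}^{\perp} \subseteq \mathcal{F}$.

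For the converse, assume $\mathcal{T} = {}^{\perp}\mathcal{F}$ and $\mathcal{F} = \mathcal{T}^{\perp}$. First I would record the standard closure properties: ${}^{\perp}\mathcal{F}$ is closed under extensions and quotients (if $X \twoheadrightarrow Y$ and $f \colon Y \to F \in \mathcal{F}$, then $X \to F$ lies in $\Hom(X,F)=0$ and surjectivity forces $f=0$), while $\mathcal{T}^{\perp}$ is closed under extensions and subobjects. Axiom (1) of the torsion pair is then immediate from $\mathcal{T} = {}^{\perp}\mathcal{F}$. For axiom (2), the plan is to associate to each $Z \in \mathcal{A}$ its maximal subobject $T(Z)$ lying in $\mathcal{T}$ and to show that $Z/T(Z) \in \mathcal{T}^{\perp} = \mathcal{F}$. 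Assume otherwise: some $\varphi \colon X \to Z/T(Z)$ with $X \in \mathcal{T}$ is nonzero. Then $I := \im \varphi$ is a nonzero quotient of $X$, hence lies in $\mathcal{T}$ by quotient-closure; its preimage $W$ under $\pi \colon Z \twoheadrightarrow Z/T(Z)$ fits into $0 \to T(Z) \to W \to I \to 0$, so $W \in \mathcal{T}$ by extension-closure. Since $I \neq 0$, this $W$ strictly enlarges $T(Z)$ inside $Z$, contradicting maximality.

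The main obstacle is the existence of the maximal torsion subobject $T(Z)$, which is not automatic in an arbitrary abelian category and typically requires an extra hypothesis such as well-poweredness, cocompleteness, or the noetherian condition suggested by the proposition's label. This is precisely where the cited references \cite{Dickson,Po,Tat} contribute, providing the required existence under their respective standing assumptions; once $T(Z)$ is available, the verification collapses to the closure argument above.
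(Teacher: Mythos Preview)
The paper does not supply its own proof of this proposition; it merely cites \cite[Prop.~3.3]{Dickson}, \cite[Lem.~1.1.3]{Po}, and \cite[Prop.~5.14]{Tat} as references for a well-known result. So there is no argument in the paper to compare against, and your proposal is effectively filling in a proof the authors omitted.

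Your forward direction is entirely standard and correct. Your converse is also the right strategy, and you are right to flag the existence of the maximal torsion subobject $T(Z)$ as the genuine obstacle: in a bare abelian category with no size or completeness hypotheses, one cannot in general form the join of all $\mathcal{T}$-subobjects of $Z$, nor guarantee that such a join lies in $\mathcal{T}$. The cited sources each carry standing assumptions (Dickson works in well-powered categories with arbitrary coproducts; Polishchuk in noetherian settings; Tattar with his own framework) that supply exactly this step. The paper's phrasing ``Let $\mathcal{A}$ be an abelian category'' is therefore slightly loose, and your caveat is the honest thing to say. For the applications in the paper (module categories over finite-dimensional algebras, tube categories, coherent sheaves on curves), all objects are noetherian and the maximal torsion subobject exists trivially as the largest element of a finite chain, so no harm is done downstream.
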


A  torsion pair gives a natural stability data $(\Phi,\{\Pi_{\varphi}\}_{\varphi\in\Phi})$, where $\Phi=\{-, +\}$ with the order $-<+$, $\Pi_{-}=\cal{F}$ and $\Pi_{+}=\cal{T}$.  In fact, a stability data can be thought as a kind of refinement of a torsion pair.

\begin{prop}\label{torsion pair}
 Let $(\Phi,\{\Pi_{\varphi}\}_{\varphi\in\Phi})$ be a stability data on $\mathcal{A}$. Let $\Phi=\Phi_{-}\cup\Phi_{+}$
  be an arbitrary decomposition with $\varphi_-<\varphi_+$ for any $\varphi_{\pm}\in\Phi_{\pm}$. Then the subcategories
$$\cal{T}=\langle\Pi_{\varphi}\mid\varphi\in\Phi_+ \rangle\quad\text{and}\quad\cal{F}=\langle\Pi_{\varphi}\mid\varphi\in\Phi_-\rangle$$
give a torsion pair $(\cal{T},\cal{F})$ in $\cal{A}$.
\end{prop}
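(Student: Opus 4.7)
The plan is to verify the two defining conditions of a torsion pair directly from the stability data structure. Condition (1), the Hom-vanishing $\Hom(\cal{T}, \cal{F}) = 0$, will follow by a d\'{e}vissage that propagates the base vanishing $\Hom(\Pi_{\varphi}, \Pi_{\psi}) = 0$ (for $\varphi \in \Phi_+$, $\psi \in \Phi_-$) through the extension-and-summand closures defining $\cal{T}$ and $\cal{F}$. Condition (2), the existence of a short exact sequence $0 \to X \to Z \to Y \to 0$ with $X \in \cal{T}$ and $Y \in \cal{F}$, will be produced by cutting the HN-filtration of an arbitrary $Z \in \cal{A}$ at the boundary separating phases in $\Phi_+$ from those in $\Phi_-$.

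For condition (1), I would proceed in two steps. First, fix $F \in \cal{F}$ and consider the class $\cal{C}_F := \{X \in \cal{A} \mid \Hom(X, F) = 0\}$. Left-exactness of $\Hom(-, F)$ applied to $0 \to X_1 \to X \to X_2 \to 0$ shows that $\cal{C}_F$ is closed under extensions, and it is trivially closed under direct summands; hence it suffices to check $\Pi_\varphi \subseteq \cal{C}_F$ for every $\varphi \in \Phi_+$. Second, for such a fixed $\varphi$, consider $\cal{D}_\varphi := \{Y \in \cal{A} \mid \Hom(\Pi_\varphi, Y) = 0\}$, which is likewise closed under extensions and summands by left-exactness of $\Hom(\Pi_\varphi, -)$. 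It therefore remains to show $\Pi_\psi \subseteq \cal{D}_\varphi$ for every $\psi \in \Phi_-$, but this is immediate from Definition \ref{defn of stab data}(1) since $\varphi \in \Phi_+$ and $\psi \in \Phi_-$ force $\varphi > \psi$.

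For condition (2), take any non-zero $Z \in \cal{A}$ together with its HN-filtration (\ref{HN-filt2.1}), with factors $A_i \in \Pi_{\varphi_i}$ and $\varphi_1 > \cdots > \varphi_n$. Since $\Phi_+$ sits entirely above $\Phi_-$ in $\Phi$, there exists a unique index $k \in \{0, 1, \ldots, n\}$ with $\varphi_i \in \Phi_+$ for $i \leq k$ and $\varphi_i \in \Phi_-$ for $i > k$. Setting $X := Z_k$ and $Y := Z/Z_k$ yields the required short exact sequence $0 \to X \to Z \to Y \to 0$; iterating the exact sequences $0 \to Z_{i-1} \to Z_i \to A_i \to 0$ for $i \leq k$ and invoking the extension-closure of $\cal{T}$ gives $X \in \cal{T}$, while the induced filtration $0 \subset Z_{k+1}/Z_k \subset \cdots \subset Z_n/Z_k = Y$ has successive factors $A_{k+1}, \ldots, A_n$ belonging to $\Pi_{\varphi_i}$ with $\varphi_i \in \Phi_-$, so $Y \in \cal{F}$.

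The whole argument is essentially routine bookkeeping once the two ingredients above are in place. The only mild technicalities are the boundary cases $k = 0$ and $k = n$, in which $X = 0$ respectively $Y = 0$; both are unproblematic since the zero object lies in every extension-closed subcategory. I do not anticipate any serious obstacle: the proposition is in essence the statement that $\cal{T}$ and $\cal{F}$ have been defined precisely so as to absorb the $\Phi_+$-part and the $\Phi_-$-part of every HN-filtration.
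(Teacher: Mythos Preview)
Your proposal is correct and follows essentially the same approach as the paper: establish $\Hom(\cal{T},\cal{F})=0$ from the base vanishing, then obtain the torsion decomposition of any $Z$ by cutting its HN-filtration at the unique index separating $\Phi_+$-phases from $\Phi_-$-phases. The paper's proof is slightly terser (it simply asserts $\Hom(\cal{T},\cal{F})=0$ ``by construction'' and treats the boundary cases $k=0,n$ as separate observations rather than folding them into a single index $k$), but the substance is identical.
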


\begin{proof} By construction we have $\Hom(\cal{T},\cal{F})=0$. In order to prove the second axiom of a torsion pair, we consider the HN-filtration \eqref{HN-filt2.1} for any non-zero object $X\in\cal{A}$. If $\bm \phi^{-}(X)\in\Phi_+$ or $\bm \phi^{+}(X)\in\Phi_-$, then $X$ belongs to the subcategory $\cal{T}$ or $\cal{F}$ respectively. Otherwise, we find some $k$, such that $\varphi_{k}\in\Phi_+$ but $\varphi_{k+1}\in\Phi_-$.
Then we get the following exact sequence $0\to X_k\to X\to X/X_k\to 0$, where $X_k\in\cal{T}$ and $X/X_k\in\cal{F}$. This finishes the proof.
\end{proof}

As a consequence of the previous proposition, we have the following result which
provides a method to construct torsion pairs in $\mathcal{A}$ using stability data.
\begin{cor}\label{torsion pair determined by phi}
 Let $(\Phi,\{\Pi_{\varphi}\}_{\varphi\in\Phi})$ be a stability data on $\mathcal{A}$. Then each $\psi\in\Phi$ determines two torsion pairs $(\Pi_{\geq\psi},\Pi_{<\psi})$ and $(\Pi_{>\psi},\Pi_{\leq\psi})$, where
$$\Pi_{\geq\psi}=\langle\Pi_{\varphi}\mid\varphi\geq \psi \rangle,\quad\Pi_{<\psi}=\langle\Pi_{\varphi}\mid\varphi<\psi\rangle;$$
$$\Pi_{>\psi}=\langle\Pi_{\varphi}\mid\varphi> \psi \rangle,\quad\Pi_{\leq\psi}=\langle\Pi_{\varphi}\mid\varphi\leq\psi\rangle.$$
\end{cor}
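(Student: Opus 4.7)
The plan is to deduce this corollary directly from Proposition 2.4 by exhibiting two natural decompositions of the ordered set $\Phi$.

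For the first torsion pair, I would set $\Phi_+ = \{\varphi \in \Phi \mid \varphi \geq \psi\}$ and $\Phi_- = \{\varphi \in \Phi \mid \varphi < \psi\}$. By construction $\Phi = \Phi_+ \cup \Phi_-$ is a disjoint union, and for any $\varphi_- \in \Phi_-$ and $\varphi_+ \in \Phi_+$ we have $\varphi_- < \psi \leq \varphi_+$, so the hypothesis of Proposition 2.4 is met. Applying that proposition produces the torsion pair $(\langle \Pi_\varphi \mid \varphi \geq \psi\rangle, \langle \Pi_\varphi \mid \varphi < \psi\rangle) = (\Pi_{\geq \psi}, \Pi_{<\psi})$.

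For the second torsion pair, I would use instead the decomposition $\Phi_+ = \{\varphi \in \Phi \mid \varphi > \psi\}$ and $\Phi_- = \{\varphi \in \Phi \mid \varphi \leq \psi\}$, where now $\psi$ itself is placed in $\Phi_-$. Again this is a disjoint union exhausting $\Phi$, and for $\varphi_\pm \in \Phi_\pm$ we have $\varphi_- \leq \psi < \varphi_+$, so Proposition 2.4 applies and yields $(\Pi_{>\psi}, \Pi_{\leq \psi})$.

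Since both conclusions are immediate corollaries of the previously proved Proposition 2.4, there is essentially no obstacle; the only thing to be careful about is that the two cases correspond to the two ways of assigning the ``boundary'' value $\psi$ to either $\Phi_+$ or $\Phi_-$, and that in each case the strict inequality $\varphi_- < \varphi_+$ required by Proposition 2.4 does hold. Consequently the proof can be given in a few lines, essentially reading ``apply Proposition 2.4 to the two decompositions above.''
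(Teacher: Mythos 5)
Your proof is correct and is exactly the argument the paper intends: the corollary is stated as an immediate consequence of Proposition \ref{torsion pair} (the result on decompositions $\Phi=\Phi_-\cup\Phi_+$ with $\varphi_-<\varphi_+$), applied to the two decompositions you describe, differing only in whether the boundary phase $\psi$ is placed in $\Phi_+$ or $\Phi_-$. The only nitpick is the reference label: in the paper's numbering the relevant result is Proposition 2.7, not 2.4, but the content you invoke is the right one.
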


\section{Finest stability data }

In this section, we define a partial ordering for the set of all stability data on an abelian category $\mathcal{A}$. We introduce a procedure to refine a stability data, and give a sufficient and necessary condition to justify a stability data to be finest. Finally, we show that any stability data can be refined to a finest one for representation-directed algebras.

\subsection{Local refinement}

In order to classify the torsion pairs via the stability data approach, we only need to consider the equivalent classes of stability data in the following sense.
\begin{defn} Two stability data $(\Phi,\{\Pi_{\varphi}\}_{\varphi\in\Phi}),(\Psi,\{P_{\psi}\}_{\psi\in\Psi})$ on $\mathcal{A}$ are called \emph{equivalent} if there exists an order-preserved bijective map $r:\Phi\rightarrow\Psi$ such that  $P_{r(\varphi)}=\Pi_\varphi$ for any $\varphi\in\Phi$.
\end{defn}

Now let us define a partial ordering for the set of all stability data on an abelian category $\mathcal{A}$ as follows.

\begin{defn}\label{finer or coarser} Let $(\Phi,\{\Pi_{\varphi}\}_{\varphi\in\Phi}),(\Psi,\{P_{\psi}\}_{\psi\in\Psi})$ be stability data on $\mathcal{A}$. We say that the stability data $(\Psi,\{P_{\psi}\}_{\psi\in\Psi})$ is \emph{finer} than $(\Phi,\{\Pi_{\varphi}\}_{\varphi\in\Phi})$, or  $(\Phi,\{\Pi_{\varphi}\}_{\varphi\in\Phi})$ is \emph{coarser} than $(\Psi,\{P_{\psi}\}_{\psi\in\Psi})$ and write  $(\Phi,\{\Pi_{\varphi}\}_{\varphi\in\Phi})\preceq(\Psi,\{P_{\psi}\}_{\psi\in\Psi})$, if there exists a surjective map $r:\Psi\rightarrow\Phi$ such that
\begin{itemize}
 \item [(1)] $\psi'>\psi''$ implies $ r(\psi')\geq r(\psi'')$;
  \item [(2)] for any $\varphi\in\Phi$, $\Pi_{\varphi}=\langle P_{\psi}\mid\psi\in r^{-1}(\varphi)\rangle$.
\end{itemize}
\end{defn}

In other words, a coarser stability data is obtained from a finer one by fusing certain
blocks of consecutive semistable subcategories. The relation ``finer-coarser"
defines a partial ordering on the set of all stability data on a given abelian category.
Minimal elements with respect to this partial ordering will be called
the \emph{finest} stability data. It is clear that the finest stability data contain the most complete information about the torsion pairs.

For a given stability data on an abelian category, we introduce a local-refinement method as follows.

\begin{prop}\label{local refinement construction} Let $(\Phi,\{\Pi_{\varphi}\}_{\varphi\in\Phi})$ be a stability data on $\mathcal{A}$.
For any $\varphi\in\Phi$, assume $(I_{\varphi},\{P_{{\psi}}\}_{\psi \in I_{\varphi}})$ is a local stability data on $\Pi_{\varphi}$.
Let $\Psi= \cup_{\varphi \in \Phi } I_{\varphi}$, which is a linearly ordered set containing each $I_{\varphi}$ as a linearly ordered subset, and $\psi_1>\psi_2$ whenever $\psi_{i}\in I_{\varphi_i}$ with $\varphi_1>\varphi_2$. Then
$(\Psi, \{P_{\psi}\}_{\psi \in \Psi})$ is a stability data on $\mathcal{A}$, which is finer than $(\Phi,\{\Pi_{\varphi}\}_{\varphi\in\Phi})$.
\end{prop}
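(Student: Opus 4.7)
The plan is to verify that $(\Psi, \{P_{\psi}\}_{\psi \in \Psi})$ satisfies the three requirements of Definition \ref{defn of stab data}, and then exhibit the surjection $r:\Psi\to\Phi$ witnessing that it refines the original stability data. Extension-closedness of each $P_{\psi}$ in $\cal{A}$ is immediate from transitivity: $P_\psi$ is extension-closed inside $\Pi_\varphi$ (where $\psi\in I_\varphi$), and $\Pi_\varphi$ is extension-closed inside $\cal{A}$. For the $\Hom$-vanishing, I would split into two cases. If $\psi'>\psi''$ both lie in a common $I_\varphi$, then $\Hom(P_{\psi'},P_{\psi''})=0$ by the local stability data axiom on $\Pi_\varphi$. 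Otherwise $\psi'\in I_{\varphi'}$ and $\psi''\in I_{\varphi''}$ with $\varphi'>\varphi''$ (by definition of the order on $\Psi$), so $\Hom(P_{\psi'},P_{\psi''})\subseteq \Hom(\Pi_{\varphi'},\Pi_{\varphi''})=0$.

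The main step is the existence of HN-filtrations in $(\Psi,\{P_\psi\}_{\psi\in\Psi})$. Given a non-zero $X\in\cal{A}$, I would first take its HN-filtration
$$0=X_0\hookrightarrow X_1\hookrightarrow\cdots\hookrightarrow X_n=X$$
with respect to the coarser stability data, with factors $A_i=X_i/X_{i-1}\in\Pi_{\varphi_i}$ and $\varphi_1>\cdots>\varphi_n$. Since each $A_i$ is non-zero in $\Pi_{\varphi_i}$, applying the local stability data on $\Pi_{\varphi_i}$ yields a local HN-filtration
$$0=A_i^{(0)}\hookrightarrow A_i^{(1)}\hookrightarrow\cdots\hookrightarrow A_i^{(m_i)}=A_i$$
with factors $B_{i,j}=A_i^{(j)}/A_i^{(j-1)}\in P_{\psi_{i,j}}$ and $\psi_{i,1}>\cdots>\psi_{i,m_i}$ in $I_{\varphi_i}$. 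Then I would pull back each subobject $A_i^{(j)}\hookrightarrow A_i$ along the quotient $X_i\twoheadrightarrow A_i$ to obtain an intermediate subobject $Y_{i,j}\hookrightarrow X_i$ containing $X_{i-1}$, with $Y_{i,j}/X_{i-1}\cong A_i^{(j)}$ and hence $Y_{i,j}/Y_{i,j-1}\cong B_{i,j}$. Concatenating the chains $X_{i-1}=Y_{i,0}\hookrightarrow\cdots\hookrightarrow Y_{i,m_i}=X_i$ across all $i$ produces a filtration of $X$ with successive factors $B_{i,j}\in P_{\psi_{i,j}}$, and the concatenated sequence of phases is strictly decreasing because each block $\psi_{i,1}>\cdots>\psi_{i,m_i}$ is decreasing in $I_{\varphi_i}$ and because $\psi_{i,m_i}>\psi_{i+1,1}$ follows from $\varphi_i>\varphi_{i+1}$ by the definition of the order on $\Psi$. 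This gives the required HN-filtration.

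For the last assertion, I would define $r:\Psi\to\Phi$ by sending each $\psi\in I_\varphi$ to $\varphi$. This map is surjective by construction, and condition (1) of Definition \ref{finer or coarser} holds because strict inequalities $\psi'>\psi''$ in $\Psi$ force $r(\psi')\geq r(\psi'')$ in $\Phi$ by the way the order on $\Psi$ was defined. For condition (2), the inclusion $\langle P_\psi\mid\psi\in I_\varphi\rangle\subseteq \Pi_\varphi$ is automatic since each $P_\psi\subseteq \Pi_\varphi$ and $\Pi_\varphi$ is extension-closed and summand-closed, while the reverse inclusion follows from the local HN-filtration: any object of $\Pi_\varphi$ has a filtration whose factors lie in various $P_\psi$ with $\psi\in I_\varphi$.

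The only technical point to check carefully is the pullback construction of the intermediate subobjects $Y_{i,j}$ inside $X_i$, but this is a standard manipulation in abelian categories using the correspondence between subobjects of the quotient $A_i$ and subobjects of $X_i$ containing $X_{i-1}$. Once this bookkeeping is set up, everything else reduces to reading off the orderings from the definition of $\Psi$; the main obstacle is thus essentially notational rather than conceptual.
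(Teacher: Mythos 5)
Your proposal is correct and follows essentially the same route as the paper: refine the coarse HN-filtration of $X$ by inserting, between $X_{i-1}$ and $X_i$, the preimages of the local HN-filtration of each factor $A_i$ (the paper realizes this via iterated pullback diagrams, which is the same subobject correspondence you invoke), then check the ordering on $\Psi$ and exhibit the surjection $r$. The only addition you make beyond the paper's write-up is the explicit verification of extension-closedness of each $P_{\psi}$ and the two-case split for the Hom-vanishing, both of which are harmless and correct.
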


\begin{proof}
 By definition we have $\Hom(P_{\psi'}, P_{\psi''})=0$ for any $\psi'>\psi'' \in \Psi$. Moreover, for any non-zero object $X\in\cal{A}$, there is a HN-filtration (\ref{HN-filt2.1}) with respect to the stability data $(\Phi,\{\Pi_{\varphi}\}_{\varphi\in\Phi})$. Since each $(I_{\varphi},\{P_{{\psi}}\}_{\psi \in I_{\varphi}})$ is a local stability data on $\Pi_{\varphi}$, each factor $A_i\in\Pi_{\varphi_{i}}$ has a filtration of the following form
\begin{equation*}
\xymatrix @C=8.25em@R=10ex@M=4pt@!0{ 0=A_{i, 0}\ar@{^(->}[r]^-{f_{i, 0}} &A_{i, 1}\ar@{->>}[d]^{g_{i, 1}}\ar@{^(->}[r]^-{f_{i, 1}}&A_{i, 2}
\ar@{->>}[d]^-{g_{i, 2}}\ar@{^(->}[r]^-{f_{i, 2}}
&\cdots\ar@{^(->}[r]^-{f_{i, m_{i}-2}}&A_{i, m_{i}-1}\ar@{->>}[d]^-{g_{i, m_{i}-1}}
\ar@{^(->}[r]^-{f_{i, m_{i}-1}}& A_{i, m_{i}}=A_{i}\ar@{->>}[d]^{g_{i, m_{i}}}&\\
& B_{i, 1}&B_{i, 2}&&B_{i, m_{i}-1}&B_{i, m_{i}}\\ }
\end{equation*}
with non-zero factors $B_{i, k}=A_{i, k}/ A_{i, k-1}\in P_{\psi_{k}}$ and strictly decreasing $\psi_{k}>\psi_{k+1}\in I_{\varphi_i}.$

By (\ref{HN-filt2.1}) we have a short exact sequence
$\xi_i: 0\rightarrow X_{i-1}\xrightarrow{p_{i-1}} X_{i}\xrightarrow {q_{i}} A_{i}\rightarrow 0 $
for any  $i=1, 2, \cdots, n$. Taking pullback along  $f_{i, m_{i}-1}$,
we obtain the following commutative diagram
$$\xymatrix{
   & & 0 \ar[d]_{}   & 0\ar[d]_{} &  & & \\
  0  \ar[r]^{} & X_{i-1}\ar@{=}[d]_{}  \ar[r]^{} &  X_{i,m_{i}-1} \ar[d]^-{p_{i, m_{i}-1}} \ar[r]^{} & A_{i,m_{i}-1} \ar[d]^-{f_{i, m_{i}-1}}\ar[r]^{} & 0  & &   \\
  0 \ar[r]^{} &X_{i-1}  \ar[r]^-{p_{i-1}} & X_{i}  \ar[r]^-{q_{i}} \ar[d]^-{q_{i, m_{i}}}& A_{i,m_{i}}=A_{i}\ar[d]^{g_{i, m_{i}}} \ar[r]^{} & 0.  & &  \\
   &  & B_{i,m_{i}}\ar[d]_{} \ar@{=}[r]^{} &B_{i,m_{i}} \ar[d]_{} &   & &  \\
    & & 0  & 0 &  & & \\
 }$$
Then we obtain the following two short exact sequences:
$$\xi_{i,m_{i}-1}:0\rightarrow X_{i-1}\rightarrow X_{i,m_{i}-1}\rightarrow A_{i,m_{i}-1}\rightarrow 0 \quad\text{and}\quad 0\rightarrow X_{i,m_{i}-1}\xrightarrow{p_{i, m_{i}-1}} X_{i}\xrightarrow{q_{i, m_{i}}} B_{i,m_{i}}\rightarrow 0, $$
which fit together as follows:
\begin{equation*}
\xymatrix@C=8.75em@R=10ex@M=4pt@!0 { X_{i-1}\ar@{^(->}[r]^-{} &X_{i,m_{i}-1}\ar@{->>}[d]^-{}\ar@{^(->}[r]^-{p_{i, m_{i}-1}}&X_{i,m_{i}}=X_{i}.\ar@{->>}[d]^-{q_{i, m_{i}}}\\
&A_{i,m_{i}-1}&B_{i,m_{i}} }
\end{equation*}
By replacing $\xi_i$ by $\xi_{i,m_{i}-1}$, and taking pullback along $f_{i, m_{i}-2}$,  and keeping the procedure going on step by step, we finally obtain that
each $X_{i}$ admits a finite filtration as follows:
\begin{equation*}
\xymatrix @C=8.48em@R=10ex@M=4pt@!0{ X_{i-1}\ar@{^(->}[r]^-{p_{i, 0}} &X_{i,1}\ar@{->>}[d]^{q_{i, 1}}\ar@{^(->}[r]^-{p_{i, 1}}&X_{i,2}
\ar@{->>}[d]^-{q_{i, 2}}\ar@{^(->}[r]^-{p_{i, 2}}
&\cdots\ar@{^(->}[r]^-{p_{i, m_{i}-2}}&X_{i,m_{i}-1}\ar@{->>}[d]^-{q_{i, m_{i}-1}}
\ar@{^(->}[r]^-{p_{i, m_{i}-1}}& X_{i,m_{i}}=X_{i}.\ar@{->>}[d]^{q_{i, m_{i}}}&\\
& B_{i, 1}&B_{i, 2}&&B_{i, m_{i}-1}&B_{i, m_{i}}\\ }
\end{equation*}
Therefore, $X$ admits a subobject filtration with factors $(B_{1,1}, \cdots, B_{1,m_{1}},  B_{2,1}, \cdots, B_{2,m_{2}}, \cdots, B_{n,1}, \cdots,$ $ B_{n,m_{n}})$, having strictly decreasing order by the definition of ordering in $\Psi$. Hence the subobject filtration of $X$ is in fact a HN-filtration, which ensures that $(\Psi, \{P_{\psi}\}_{\psi \in \Psi})$ is a stability data on $\cal{A}$.

Recall that $\Psi= \cup_{\varphi \in \Phi } I_{\varphi}$. This induces a well-defined surjective map $r: \Psi\to \Phi$ such that $r(\psi)=\varphi$ for any $ \psi \in I_{\varphi}$. In order to show $(\Psi, \{P_{\psi}\}_{\psi \in \Psi})$ is finer than $(\Phi,\{\Pi_{\varphi}\}_{\varphi\in\Phi})$, it remains to show the statements (1) and (2) in
Definition \ref{finer or coarser} hold. In fact,
if $\psi_{1}>\psi_{2} \in\Psi$, we claim $r(\psi_{1})\geq r(\psi_{2})$. Otherwise, we write $r(\psi_{i})=\varphi_{i}$, that is $\psi_{i} \in I_{\varphi_{i}}$ for $i=1,2$. Then $\varphi_{1}<\varphi_{2}$, which follows that $\psi_{1}<\psi_{2}$ by definition of ordering in $\Psi$, a contradiction. This proves the claim.
On the other hand, for any $\varphi\in\Phi$,
$$\Pi_{\varphi}=\langle P_{{\psi}}\ |\ \psi \in I_{\varphi}\rangle=\langle P_{{\psi}}\ |\ \psi \in r^{-1} (\varphi)\rangle.$$
We are done.
\end{proof}

The stability data $(\Psi, \{P_{\psi}\}_{\psi \in \Psi})$ obtained in the above way will be called a \emph{local refinement} of $(\Phi, \{\Pi_{\varphi}\}_{\varphi\in\Phi})$.

\subsection{Finest stability data}

In this subsection, we provide a criterion for a stability data to be finest on arbitrary abelian category.

\begin{thm}\label{iff cond for finest}
Let $\mathcal{A}$ be an abelian category.  A stability data $(\Phi,\{\Pi_{\varphi}\}_{\varphi\in\Phi})$ on $\mathcal{A}$ is finest if and only if for any $\varphi\in\Phi$ and non-zero objects $X,Y\in\Pi_{\varphi}$, $\Hom(X,Y)\neq 0\neq \Hom(Y,X)$.
\end{thm}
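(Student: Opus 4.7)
The plan is to prove both directions of the equivalence, with the ``finest $\Rightarrow$ Hom-condition'' direction providing the main technical content.

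For the easy direction (``Hom-condition $\Rightarrow$ finest''), I argue by contradiction. Suppose $(\Phi,\{\Pi_\varphi\})$ admits a strictly finer stability data $(\Psi,\{P_\psi\})$, with refinement map $r\colon\Psi\to\Phi$. Strictness forces some fibre $r^{-1}(\varphi_0)$ to contain at least two elements $\psi_1>\psi_2$, and by Definition \ref{finer or coarser}(2) we may choose non-zero $Y\in P_{\psi_1}$ and $X\in P_{\psi_2}$; both lie in $\Pi_{\varphi_0}$. The stability axiom (Definition \ref{defn of stab data}(1)) forces $\Hom(Y,X)=0$, contradicting the Hom-condition.

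For the main direction, suppose for contradiction that non-zero $X,Y\in\Pi_\varphi$ satisfy $\Hom(X,Y)=0$ (the case $\Hom(Y,X)=0$ being symmetric). I will construct a two-element local stability data on $\Pi_\varphi$ and then invoke Proposition \ref{local refinement construction} to lift it to a strictly finer stability data on $\mathcal{A}$, contradicting finestness. Concretely, set
\[
\mathcal{F}:=\{Z\in\Pi_\varphi\mid\Hom(X,Z)=0\},\qquad \mathcal{T}:=\Pi_\varphi\cap{}^{\perp}\mathcal{F}.
\]
Then $Y\in\mathcal{F}$ and $X\in\mathcal{T}$ are both non-zero, each of $\mathcal{T},\mathcal{F}$ is extension-closed by the long exact Hom sequence, and $\Hom(\mathcal{T},\mathcal{F})=0$ by construction. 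The remaining task is, for every non-zero $Z\in\Pi_\varphi$, to exhibit an HN-type decomposition $0\to Z_+\to Z\to Z_-\to 0$ with $Z_+\in\mathcal{T}$ and $Z_-\in\mathcal{F}$.

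The key enabling observation is that $\Pi_\varphi$ is itself an abelian subcategory of $\mathcal{A}$: for $A\in\Pi_\varphi$, any subobject $B\subseteq A$ satisfies $\bm\phi^{+}(B)\leq\varphi$ and any quotient $C$ satisfies $\bm\phi^{-}(C)\geq\varphi$ (otherwise composing with the extreme HN-factor would yield a non-zero morphism between different phases, violating Definition \ref{defn of stab data}(1)); applied to a morphism $f$ in $\Pi_\varphi$ this forces $\im f$, $\ker f$, $\coker f$ back into $\Pi_\varphi$. Inside this abelian $\Pi_\varphi$, $\mathcal{F}$ is closed under subobjects, extensions and direct summands, while $\mathcal{T}$ is closed under quotients and extensions. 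The required short exact sequence is then the torsion decomposition with $Z_+=\tau(Z)$ constructed as the sum of images of all maps $X\to Z$ (iterated transfinitely if necessary): the maximality of $\tau(Z)$ inside $\mathcal{T}$ forces $Z/\tau(Z)\in\mathcal{F}$. I expect the main obstacle to be establishing the existence of this torsion radical in full generality: it is routine in the categories of interest later in the paper (coherent sheaves on weighted projective curves, tubes, tame hereditary algebras), but abstractly requires enough smallness of $\Pi_\varphi$ to ensure that the relevant sum of subobjects is well-defined. Once this is secured, Proposition \ref{local refinement construction} produces the promised strictly finer stability data, yielding the contradiction.
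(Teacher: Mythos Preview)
Your easy direction is correct and identical to the paper's.

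For the main direction, your definitions of $\mathcal{F}$ and $\mathcal{T}$ coincide with the paper's $\Pi_{\varphi_-}$ and $\Pi_{\varphi_+}$, but your ``key enabling observation'' that $\Pi_\varphi$ is an abelian subcategory of $\mathcal{A}$ is \emph{false} in general. Your argument correctly shows $\im f\in\Pi_\varphi$ (it is simultaneously a quotient of the source and a subobject of the target, so $\bm\phi^{-}(\im f)\geq\varphi\geq\bm\phi^{+}(\im f)$). But for $\ker f$ you only obtain $\bm\phi^{+}(\ker f)\leq\varphi$, and nothing forces $\bm\phi^{-}(\ker f)\geq\varphi$; dually for $\coker f$. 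A concrete counterexample: in $\mod\mathbf{k}(1\to 2)$, take the stability data coming from the torsion pair $(\langle P_1,S_1\rangle,\langle S_2\rangle)$. The surjection $P_1\twoheadrightarrow S_1$ is a morphism in $\Pi_{+}$, yet $\ker(P_1\twoheadrightarrow S_1)=S_2\in\Pi_{-}$. So $\Pi_\varphi$ need not be closed under kernels or cokernels, and your construction of the torsion radical $\tau(Z)$ \emph{inside} $\Pi_\varphi$ has no category to live in. The obstacle you anticipated (smallness for the transfinite sum) is not the real one.

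The paper sidesteps this entirely by working globally in $\mathcal{A}$: it enlarges your $\mathcal{T},\mathcal{F}$ to
\[
\mathcal{T}'=\langle \Pi_\psi,\ \psi>\varphi;\ \Pi_{\varphi_+}\rangle,\qquad
\mathcal{F}'=\langle \Pi_\psi,\ \psi<\varphi;\ \Pi_{\varphi_-}\rangle,
\]
and checks $\mathcal{T}'={}^{\perp}\mathcal{F}'$ and $\mathcal{F}'=(\mathcal{T}')^{\perp}$ by reading off the HN-filtration of an arbitrary $Z\in\mathcal{A}$ (the bottom HN-factor $A_n$ surjects from $Z$, so $\Hom(Z,\Pi_{\varphi_-})=0$ forces $A_n\in\Pi_{\varphi_+}$ when $\varphi_n=\varphi$). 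Proposition~\ref{torsion pair on Noetherian abelian} then gives a genuine torsion pair in $\mathcal{A}$, and the torsion decomposition of any $W\in\Pi_\varphi$ is shown a posteriori to have both pieces in $\Pi_{\varphi_\pm}$, producing the local stability data needed for Proposition~\ref{local refinement construction}. The missing idea in your approach is precisely this passage through the ambient abelian category and its HN-structure, rather than attempting an internal torsion theory in the non-abelian $\Pi_\varphi$.
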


\begin{proof}  To prove the `` if " part, we assume $(\Phi,\{\Pi_{\varphi}\}_{\varphi\in\Phi})$  is not finest. Then there exists a stability data $(\Psi, \{P_{\psi}\}_{\psi \in \Psi})$ which is finer than $(\Phi,\{\Pi_{\varphi}\}_{\varphi\in\Phi})$. Hence there is a surjective map $r: \Psi \rightarrow \Phi$, which is not a bijection. So there exists $\varphi \in \Phi$ and $\psi_1> \psi_2\in\Psi$ such that $r(\psi_1) = r(\psi_2)=\varphi$. Then $P_{\psi_1}, P_{\psi_2} \subseteq \Pi_{\varphi}$, but $\Hom(P_{\psi_1}, P_{\psi_2})=0$, a contradiction.

For the `` only if " part, we assume there exists some $\varphi\in\Phi$ and non-zero $X,Y\in\Pi_{\varphi}$ such that $\Hom(X,Y)=0$. We will show that $(\Phi,\{\Pi_{\varphi}\}_{\varphi\in\Phi})$ is not finest.

For this we first need to construct a new torsion pair in $\mathcal{A}$.
Define
$$\Pi_{\varphi_{-}}=\{Z\in\Pi_{\varphi}\mid \Hom(X,Z)=0\}\quad \text{and\quad} \Pi_{\varphi_{+}}=\{W\in\Pi_{\varphi}\mid \Hom(W,Z)=0, \forall Z\in \Pi_{\varphi_{-}}\}.$$
Then obviously $Y\in\Pi_{\varphi_{-}}$,
$X\in\Pi_{\varphi_{+}}$ and $\Hom(\Pi_{\varphi_{+}}, \Pi_{\varphi_{-}})=0$.
Let $$\cal{T}=\langle  \Pi_{\psi}, \psi>\varphi; \; \Pi_{\varphi_{+}}\rangle\quad \text{and\quad} \cal{F}=\langle  \Pi_{\psi}, \psi<\varphi; \; \Pi_{\varphi_{-}}\rangle.$$ We claim that
$(\cal{T},\cal{F})$ forms a torsion pair in
$\mathcal{A}$. According to Proposition \ref{torsion pair on Noetherian abelian}, it suffices to show that $\cal{T}={}^{\bot}\cal{F}$ and $\cal{F}=\cal{T}^{\bot}$. We only show the first statement, the second one can be obtained similarly.

By definition, we have $\Hom(\cal{T},\cal{F})=0$. Hence $\cal{T}\subseteq{}^{\bot}\cal{F}$. On the other hand, for any
$Z\in {}^{\bot}\cal{F}$, we need to show that $Z\in \cal{T}$.
Assume the HN-filtration of $Z$ is given by
\begin{equation*}
\xymatrix @C=6.55em@R=10ex@M=4pt@!0{ 0=Z_{0}\ar@{^(->}[r]^-{p_{0}} &Z_1\ar@{->>}[d]^{q_{1}}\ar@{^(->}[r]^-{p_{1}}&Z_{2}
\ar@{->>}[d]^-{q_{2}}\ar@{^(->}[r]^-{p_{2}}
&\cdots\ar@{^(->}[r]^-{p_{n-2}}&Z_{n-1}\ar@{->>}[d]^-{q_{n-1}}
\ar@{^(->}[r]^-{p_{n-1}}& Z_n=Z\ar@{->>}[d]^{q_{n}}&\\
& A_1&A_{2}&&A_{n-1}&A_{n}\\ }
\end{equation*}
with factors $A_i\in\Pi_{\varphi_i}$.
Now $\Hom(Z,\cal{F})=0$ implies $\varphi_{n}\geq \varphi$. If $\varphi_{n}> \varphi$, then $Z\in\langle\Pi_{\psi}\ |\ \psi>\varphi\rangle\subseteq \cal{T}$.
If $\varphi_{n}= \varphi$, then $\Hom(Z, \Pi_{\varphi_{-}})=0$ implies  $\Hom(A_n, \Pi_{\varphi_{-}})=0$ since there is a surjection $q_{n}: Z\twoheadrightarrow A_n$.
Hence $A_n\in({}^{\bot}\Pi_{\varphi_{-}})\cap \Pi_{\varphi}=\Pi_{\varphi_{+}}\subseteq \cal{T}$ and then $Z\in \cal{T}$.
Hence, $\cal{T}={}^{\bot}\cal{F}$.

Since $(\cal{T},\cal{F})$ is a torsion pair, for any object $W\in \Pi_{\varphi} \subset \cal{A}$, there exists a decomposition
$0\to W_t\to W\to W_f\to 0$ with $W_t\in\cal{T}$ and $W_f\in\cal{F}$.
For any $\psi>\varphi$, $\Hom(\Pi_{\psi}, W)=0$ implies $\Hom(\Pi_{\psi}, W_t)=0$, which ensures that $W_t\in \langle \Pi_{\psi}\ |\ \psi\leq\varphi\rangle\cap \cal{T}=\Pi_{\varphi_{+}}$.
Similarly, one can show that $W_f\in \Pi_{\varphi_{-}}$.
Therefore, $\Pi_{\varphi}=\langle \Pi_{\varphi_{-}},  \Pi_{\varphi_{+}}\rangle$.

Let $\Psi=(\Phi\setminus\{\varphi\}) \cup \{\varphi_{-}, \varphi_{+}\}$, which is a linearly ordered set with the relations $\varphi'>\varphi_{+}>\varphi_{-}>\varphi''$ for any $\varphi'>\varphi_{}>\varphi'' \in \Phi$.
By Proposition \ref{local refinement construction}, we know that
$(\Psi, \{\Pi_{\psi}\}_{\psi \in \Psi})$ is a stability data, which is finer than $(\Phi,\{\Pi_{\varphi}\}_{\varphi\in\Phi})$, yielding a contradiction.
We are done.
\end{proof}

We know that stability data is a refinement of torsion pairs. In fact, for certain cases, all the torsion pairs can be obtained from finest stability data.

\begin{prop}\label{torsion pair construction}
Assume that each stability data on $\cal{A}$ is coarser than a finest one. Then for any torsion pair $(\cal{T}, \cal{F})$, there exists a stability data $(\Phi,\{\Pi_{\varphi}\}_{\varphi\in\Phi})$ and a decomposition  $\Phi=\Phi_{-}\cup\Phi_{+}$, such that $$\cal{T}=\langle\Pi_{\varphi}\mid\varphi\in\Phi_+ \rangle,\quad\cal{F}=\langle\Pi_{\varphi}\mid\varphi\in\Phi_-\rangle.$$
Moreover, $(\Phi,\{\Pi_{\varphi}\}_{\varphi\in\Phi})$ can be taken from the set of finest stability data.
\end{prop}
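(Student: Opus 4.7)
The plan is to turn the given torsion pair into a coarse, two-phase stability data, feed that into the hypothesis to obtain a finest refinement, and then read off the desired decomposition from the refinement map. This way the same stability data will simultaneously serve as a finest one and provide the decomposition witnessing $(\mathcal T,\mathcal F)$.

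First, given the torsion pair $(\mathcal T,\mathcal F)$, I would form the stability data $(\Phi_0,\{\Pi_\varphi\}_{\varphi\in\Phi_0})$ with $\Phi_0=\{-,+\}$, $-<+$, $\Pi_+=\mathcal T$, $\Pi_-=\mathcal F$, exactly as indicated in the paragraph preceding Proposition 2.9. The axiom $\mathrm{Hom}(\mathcal T,\mathcal F)=0$ gives condition~(1) of Definition~\ref{defn of stab data}, the two classes are extension-closed by Proposition~\ref{torsion pair on Noetherian abelian} (since $\mathcal T={}^\perp\mathcal F$ and $\mathcal F=\mathcal T^\perp$), and for any non-zero $Z\in\cal A$ the short exact sequence $0\to X\to Z\to Y\to 0$ with $X\in\mathcal T,\ Y\in\mathcal F$ furnishes the Harder–Narasimhan filtration (of length at most $2$), giving condition~(2).

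Next, I invoke the running hypothesis: there exists a finest stability data $(\Psi,\{P_\psi\}_{\psi\in\Psi})$ coarser than which sits $(\Phi_0,\{\Pi_\varphi\}_{\varphi\in\Phi_0})$; equivalently, $(\Phi_0,\{\Pi_\varphi\}_{\varphi\in\Phi_0})\preceq(\Psi,\{P_\psi\}_{\psi\in\Psi})$. By Definition~\ref{finer or coarser} this provides a surjective map $r:\Psi\to\Phi_0$ that is weakly order-preserving and satisfies $\Pi_\varphi=\langle P_\psi\mid \psi\in r^{-1}(\varphi)\rangle$ for $\varphi\in\Phi_0$. I then set $\Psi_+:=r^{-1}(+)$ and $\Psi_-:=r^{-1}(-)$, so that $\Psi=\Psi_+\sqcup\Psi_-$. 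If some $\psi_-\in\Psi_-$ and $\psi_+\in\Psi_+$ satisfied $\psi_->\psi_+$, then the order-preserving property would force $r(\psi_-)\geq r(\psi_+)$, i.e.\ $-\geq +$ in $\Phi_0$, a contradiction. Hence $\psi_+>\psi_-$ for every $\psi_\pm\in\Psi_\pm$, which is the required compatibility with the linear order on $\Psi$.

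Finally, unwinding $\Pi_\varphi=\langle P_\psi\mid \psi\in r^{-1}(\varphi)\rangle$ at $\varphi=\pm$ yields
\[
\mathcal T=\Pi_+=\langle P_\psi\mid \psi\in\Psi_+\rangle,\qquad \mathcal F=\Pi_-=\langle P_\psi\mid \psi\in\Psi_-\rangle,
\]
so renaming $(\Psi,\{P_\psi\})$ as $(\Phi,\{\Pi_\varphi\})$ and $\Psi_\pm$ as $\Phi_\pm$ gives the assertion, and since $(\Psi,\{P_\psi\}_{\psi\in\Psi})$ was chosen to be finest, the additional ``moreover" clause is automatic. The only delicate point is confirming that the two-block assignment coming from a torsion pair really is a stability data in the sense of Definition~\ref{defn of stab data}; everything else is a direct application of the definition of ``coarser than'' together with the hypothesis.
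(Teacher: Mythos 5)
Your proposal is correct and follows essentially the same route as the paper: the paper's (one-line) proof likewise passes the torsion pair to the two-phase stability data $(\{-,+\},\{\mathcal F,\mathcal T\})$, invokes the hypothesis to get a finest refinement, and reads the decomposition off the refinement map via Definition~\ref{finer or coarser}. You have merely written out the details (extension-closedness via Proposition~\ref{torsion pair on Noetherian abelian}, the length-$\le 2$ HN filtrations, and the order compatibility of $\Psi_\pm=r^{-1}(\pm)$) that the paper leaves implicit.
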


\begin{proof} Recall that each  torsion pair gives a stability data. Then the result follows from Proposition \ref{torsion pair}.
\end{proof}

Let $A$ be a representation-directed algebra. Recall from \cite{Ringel} that the indecomposable modules in $\mod$-$A$ can be ordered by a linearly ordered set $\Phi_{A}$, namely, $\ind (\mod$-$A)$ $=\{M_{\varphi} \ |\ \varphi\in \Phi_{A}\}$, such that $\Hom(M_{\varphi'},M_{\varphi''})=0$ for any $\varphi'>\varphi''\in \Phi_{A}$. Let $\Pi_{\varphi}=\langle M_{\varphi}\rangle$ for any $\varphi\in\Phi_A$, then by Theorem \ref{iff cond for finest}, $(\Phi_A,\{\Pi_{\varphi}\}_{\varphi\in\Phi_A})$ is a finest stability data on $\mod$-$A$. Clearly, any finest stability data on $\mod$-$A$ has the above form (depends on $\Phi_A$), and any stability data can be refined to a finest one.

Note that the path algebra $\mathbf{k}Q$ of a Dynkin quiver $Q$ is a representation-directed algebra, hence each stability data on $\mod$-$\mathbf{k}Q$ is coarser than a finest one. By Proposition \ref{torsion pair construction}, we can obtain the classification of torsion pairs in the module category $\mod$-$\mathbf{k}Q$ via the stability data approach. In the following we provide two concrete examples for path algebras of Dynkin type $A_2$ and $A_3$.

\begin{exam}
Let $Q:1 \rightarrow  2$ and $A_2$:=$\mathbf{k}Q$. Then there are only three indecomposable modules in $\mod$-$A_2$, the simple modules $S_1, S_2$ and the projective modules $P_1, P_2(=S_2)$. There are only two equivalent classes of finest stability data $(\Phi,\{\Pi_{\varphi}\}_{\varphi\in\Phi})$ on $\mod$-$A_2$ as follows:
\begin{itemize}
  \item [(1)]  $\Phi =\{ 1, 2, 3  \}$, and\ $\Pi_{ 1}=\langle S_2\rangle, \Pi_{2}=\langle P_1\rangle, \Pi_{ 3}=\langle S_1\rangle$;
in this case, each indecomposable module is semistable;
  \item [(2)] $\Phi =\{ 1, 2  \}$, and\ $\Pi_{ 1}=\langle S_1\rangle,  \Pi_{2}=\langle S_2\rangle$;
in this case, only $S_1, S_2$ are semistable indecomposable modules.  The indecomposable module $P_1$ is not semistable, whose HN-filtration is induced from the exact sequence
$$0\rightarrow S_2 \rightarrow P_1 \rightarrow S_1 \rightarrow0.$$
\end{itemize}

As a consequence of  Proposition \ref{torsion pair construction}, we obtain a classification of torsion pairs in $\mod$-$A_2$ as below:

\begin{table}[h]
\caption{Non-trivial torsion pairs in $\mod$-$A_2$}
\begin{tabular}{|c|c|c|c|c|}
\hline
\makecell*[c]{$\cal{T}$}&$\cal{F}$\\
\hline
\makecell*[c]{$\langle S_1 \rangle$}&$ \langle P_1,S_2 \rangle $\\
\hline
\makecell*[c]{$\langle  S_{2} \rangle$}&$ \langle S_{1} \rangle $\\
\hline
\makecell*[c]{$\langle P_1, S_1  \rangle  $}&$  \langle  S_2 \rangle$\\
\hline
\end{tabular}
\label{table 2}
\end{table}
\end{exam}

\begin{exam}
Let $Q: 1\rightarrow2\rightarrow3$ and $A_3$:=$\mathbf{k}Q$.
Then the Auslander-Reiten quiver $\Gamma(\mod$-$A_3)$ of the module category $\mod$-$A_3$ has the form
$$\xymatrix@M=1pt@!0{
   &  & I_{3} \ar[dr]_{}  &  &  \\
   & P_{2}\ar[dr]_{} \ar[ur]^{} &  & I_{2}\ar[dr]_{}  & \\
  S_{3} \ar[ur]^{} &  & S_{2} \ar[ur]^{} &  & S_{1}.  }$$

Let $\Phi =\{ 1, 2, 3\}$ be a linear ordered set in the natural sense $1<2<3$, and let $\Pi_{1}=\langle S_1\rangle, \Pi_{2}=\langle S_2\rangle, \Pi_{3}=\langle S_3\rangle$, then $(\Phi,  \{\Pi_{\varphi}\}_{\varphi\,\in\, \Phi})$ is a finest stability data on $\mod$-$A_3$.
In this case, $\bm \phi(S_{3})>\bm \phi(S_{2})>\bm \phi(S_{1})$ and only $S_3, S_2, S_1$ are semistable indecomposable modules. The indecomposable module $P_2, I_{3}, I_{2}$ are not semistable, whose HN-filtration are given as below respectively:
\begin{equation*}
{\footnotesize\xymatrix { S_{3}^{}\ar@{^(->}[r]^-{} &P_{2}\ar@{->>}[d]^{}&&
S_{2}^{}\ar@{^(->}[r]^-{} &P_{2}\ar@{->>}[d]^{}\ar@{^(->}[r]^-{} &I_{3}\ar@{->>}[d]^{}&&
S_{2}^{}\ar@{^(->}[r]^-{}&I_{2}.\ar@{->>}[d]^{}&&\\
&S_{2}&&&S_{2}&S_{1}&&&S_{1} & }}
\end{equation*}

Similarly, we can obtain all the other finest stability data $(\Phi,  \{\Pi_{\varphi}\}_{\varphi\,\in\, \Phi})$ on $\mod$-$A_3$ as follows:

\begin{itemize}
\item[(1)] $\Phi =\{ 1, 2, 3, 4\}$;

\begin{table}[h]
\begin{tabular}{|c|c|c|c|}
\hline
\makecell*[c]{$\Pi_{1}$}&$\Pi_{2}$&$\Pi_{3}$&$\Pi_{4}$\\
\hline
\makecell*[c]{$\langle S_1\rangle$}&$ \langle S_3\rangle $&$\langle P_2\rangle$&$\langle  S_2\rangle$\\
\hline
\makecell*[c]{$\langle S_2\rangle$}&$ \langle I_2\rangle $&$\langle S_1\rangle$&$\langle  S_3\rangle$\\
\hline
\makecell*[c]{$\langle S_2\rangle$}&$ \langle I_2\rangle $&$\langle S_3\rangle$&$\langle  S_1\rangle$\\
\hline
\makecell*[c]{$\langle S_3\rangle$}&$ \langle S_1\rangle $&$\langle P_2\rangle$&$\langle  S_2\rangle$\\
\hline
\end{tabular}
\label{I4}
\end{table}

\item[(2)] $\Phi =\{ 1, 2, 3, 4, 5 \}$;

\begin{table}[h]
\begin{tabular}{|c|c|c|c|c|}
\hline
\makecell*[c]{$\Pi_{1}$}&$\Pi_{2}$&$\Pi_{3}$&$\Pi_{4}$&$\Pi_{5}$\\
\hline
\makecell*[c]{$\langle S_2\rangle$}&$\langle S_3\rangle$&$\langle I_3\rangle$&$\langle  I_2\rangle$&$\langle S_1\rangle$\\
\hline
\makecell*[c]{$\langle S_3\rangle$}&$\langle P_2\rangle$&$\langle I_3\rangle$&$\langle  S_1\rangle$&$\langle S_2\rangle$\\
\hline
\end{tabular}
\label{I5}
\end{table}

\item[(3)] $\Phi =\{ 1, 2, 3, 4, 5, 6\}$;

\begin{table}[h]
\begin{tabular}{|c|c|c|c|c|c|}
\hline
\makecell*[c]{$\Pi_{1}$}&$\Pi_{2}$&$\Pi_{3}$&$\Pi_{4}$&$\Pi_{5}$&$\Pi_{6}$\\
\hline
\makecell*[c]{$\langle S_3\rangle$}&$\langle P_2\rangle$&$\langle S_2\rangle$&$\langle I_3\rangle$&$\langle I_2\rangle$&$\langle S_1\rangle$\\
\hline
\makecell*[c]{$\langle S_3\rangle$}&$\langle P_2\rangle$&$\langle I_3\rangle$&$\langle S_2\rangle$&$\langle I_2\rangle$&$\langle S_1\rangle$\\
\hline
\end{tabular}
\label{I6}
\end{table}

As a consequence of Proposition \ref{torsion pair construction}, we obtain a classification of torsion pairs in $\mod$-$A_3$ as below:

\begin{table}[h]
\caption{Non-trivial torsion pairs $(\mathcal{T},\mathcal{F})$ in $\mod$-$A_3$}
\begin{tabular}{|c|c|c|c|c|c|}
\hline
\makecell*[c]{$\mathcal{T}$}&$\mathcal{F}$& \multirow{4}{*}{} &$\mathcal{T}$&$\mathcal{F}$\\
\cline{1-2}   \cline{4-5}

\makecell*[c]{$\langle S_1\rangle$}&$ \langle S_3, S_2, I_2\rangle $&&$\langle I_2, S_1\rangle$&$\langle S_3, S_2, I_3\rangle$\\
\cline{1-2}   \cline{4-5}
\makecell*[c]{$\langle S_2\rangle$}&$\langle S_3, P_2, S_1\rangle$&&
$\langle P_2, S_2\rangle$&$\langle S_3,  S_1\rangle$\\
\cline{1-2}   \cline{4-5}
\makecell*[c]{$\langle S_3\rangle $}&$\langle S_2, S_1\rangle$&&
$\langle  I_3, I_2, S_1\rangle$&$ \langle S_3, S_2\rangle $\\
\cline{1-2}   \cline{4-5}
\makecell*[c]{$\langle S_2, S_1\rangle$}&$\langle S_3, P_2, I_3\rangle$&&$\langle S_2, I_3,  S_1\rangle$
&$\langle P_2, S_3\rangle$\\
\cline{1-2}   \cline{4-5}
\makecell*[c]{$\langle S_3, S_1 \rangle$}&$\langle S_2, I_2\rangle$&&
$\langle  P_2, S_2, S_1 \rangle$&$ \langle  S_3 \rangle $\\
\cline{1-2}   \cline{4-5}
\makecell*[c]{$\langle S_2, S_3\rangle$}&$ \langle S_1 \rangle$&&
$\langle S_3, I_2, S_1\rangle$&$\langle S_2 \rangle$\\
\hline
\end{tabular}
\label{torsion pairs in A3}
\end{table}
\end{itemize}
\end{exam}

\section{\rm{\textbf{ Finest stability data on tube categories}}}

In this section we focus on the nilpotent representation category of cyclic quiver $C_{n}$ with $n$ vertices, that is, on the  \emph{tube}  category $\textbf{T}_n$ of rank $n\geq1$ in the sense of \cite[Sect. 4.6]{Ringel} and \cite[Chap. X]{SimSkow}, whose associated Auslander-Reiten quiver (AR-quiver for short) $\Gamma(\textbf{T}_n)$ is a \emph{stable tube} of rank $n$. We investigate  the stability data for tube category $\textbf{T}_n$, and provide a new method to classify torsion pairs on $\textbf{T}_n$ via stability data approach.

\subsection{Homological properties for tubes}

 Recall that  $\textbf{T}_n$ is a hereditary finite length abelian category with $n$ simple objects $S_{0}, \cdots, S_{n-1}$, equipped with an Auslander-Reiten translation $\tau$ satisfying $\tau(S_{i} )=S_{i-1}$, where the index is (always) considered module $n$.
A stable tube is called \emph{homogeneous} if it has rank
one and it is called \emph{non-homogeneous} otherwise.
For any simple object $S_{j}$ in the tube category
$\textbf{T}_n$ and any $t\in \mathbb{Z}_{\geq 1}$, there is a unique object $S^{(t)}
_{j}$ of length $t$ and top $(S^{(t)}_{j}) = S_{j}$, and any indecomposable object in $\textbf{T}_n$ has this form. The tube category $\textbf{T}_n$ is a uniserial category in the sense
that all subobjects of $S^{(t)}_{j}$ form a chain with respect to the inclusion:
$$ 0:=S^{(0)}_{j-t}\subseteq S^{(1)}_{j-t+1}\subseteq S^{(2)}_{j-t+2}\subseteq \cdots \subseteq S^{(t-1)}_{j-1}\subseteq S^{(t)}_{j}.$$
Consequently, the socle of $S^{(t)}_{j}$ is given by soc $(S^{(t)}_{j})=S^{}_{j-t+1}$. Moreover, $S^{(r)}_{j-t+r}/S^{(r-1)}_{j-t+r-1}=S^{}_{j-t+r}$ for $1\leq r\leq t$, and the \emph{composition factor sequence} of $S_{j}^{(t)}$ is given by $(S_{j-t+1}, \cdots, S_{j-1}, S_{j})$. The set of pairwise distinct simple objects appearing in the sequence is called the \emph{composition factor set} of $S_{j}^{(t)}$. It is easy to see that any $S_{j}^{(t)}$ with $t\geq n$ has the same composition factor set $\{S_0, S_1, \cdots, S_{n-1}\}$.

We denote by $u_{j,t}: S_{j-1}^{(t-1)} \rightarrow S_{j}^{(t)}$ and $p_{j,t}:  S_{j}^{(t)} \rightarrow S_{j}^{(t-1)}$ the irreducible injection map and surjection map respectively. For convenience, in the following we will simply denote them by $u$ and $p$
respectively if no confusions appear. For example, the composition
$$\xymatrix@C=3.5em@R=5ex@M=4pt@!0{
 & &S_{j}^{(t)} \ar[rr]^-{p_{j,t}}& &S_{j}^{(t-1)} \ar[rr]^-{p_{j,t-1}}& &S_{j}^{(t-2)}\ar[rr]^-{p_{j,t-2}} &&\cdots\ar[rr]^-{p_{j,3}}&&S_{j}^{(2)}\ar[rr]^-{p_{j,2}}& & S_{j}^{}  \\ }
$$
will be just denoted by $p^{t-1}: S_{j}^{(t)} \rightarrow S_{j}^{}$. With this simplified notations, we have $u\circ p = p\circ u$
whenever it makes sense. We have the following fundamental exact sequences in the tube category $\textbf{T}_n$, see for example \cite[Thm. 2.2]{SimSkow} and \cite[Lem. A.1]{RW}.
\begin{lem}\label{exact seqs in tube}
The following are exact sequences in {\rm{\textbf{T}}$_n$} for any $j \in \mathbb{Z}/n\mathbb{Z}$ and $t \geq 1$:
$$\xymatrix@C=4.5em@R=5ex@M=3.5pt@!0{
(1)\ \ 0 \ar[r]& S_{j-1}^{(t)} \ar[rr]^{u}& &S_{j}^{(t+1)} \ar[rr]^-{p^{t}}& & S_{j}^{}\ar[r]& 0; & & & & & & & & &  & & & & & & \\
 (2)\ \ 0 \ar[r]& S_{j-t}^{} \ar[rr]^-{u^{t}}& &S_{j}^{(t+1)} \ar[rr]^-{p}& & S_{j}^{(t)}\ar[r]& 0; & & & & & & & & &  & & & & & & \\
 (3)\ \ 0 \ar[r]& S_{j}^{(t)} \ar[rr]^-{(u,p)^{t}}&  &S_{j+1}^{(t+1)}\oplus S_{j}^{(t-1)} \ar[rr]^-{(p,-u)}&  & S_{j+1}^{(t)}\ar[r]& 0; & & & & & & & & &  & &  \\
 (4)\ \ 0 \ar[r]^-{} &  S_{j-t+1}^{} \ar[rr]^-{u^{t-1}}& &S_{j}^{(t)} \ar[rr]^-{u\circ p = p\circ u}& & S_{j+1}^{(t)}\ar[r]^-{p^{t-1}}  & S_{j+1}^{}\ar[r] & 0.& & & \\
  }
$$
\end{lem}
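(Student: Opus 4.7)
The plan is to derive all four sequences directly from the uniserial structure of $\textbf{T}_n$ recalled immediately above the statement: for each indecomposable $S_j^{(t)}$, the subobjects form the chain $0 \subset S_{j-t+1} \subset S_{j-t+2}^{(2)} \subset \cdots \subset S_{j-1}^{(t-1)} \subset S_j^{(t)}$, with the length-$r$ subobject being $S_{j-t+r}^{(r)}$ and the corresponding quotient $S_j^{(t)}/S_{j-t+r}^{(r)} = S_j^{(t-r)}$. Armed with this, each sequence will be read off by identifying a subobject of a given length inside $S_j^{(t+1)}$ (or a similar object) and computing its quotient via the composition factor sequence.

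For (1), I would observe that $S_{j-1}^{(t)}$ is the unique length-$t$ subobject of $S_j^{(t+1)}$, embedded via the irreducible inclusion $u$; the quotient has length $1$ with top $S_j$, hence equals $S_j$, and the projection is the $t$-fold composite $p^t$. Sequence (2) is obtained symmetrically: the socle of $S_j^{(t+1)}$ is $S_{j-t}$, embedded via $u^t$, with quotient of length $t$ and top $S_j$, hence equal to $S_j^{(t)}$, with projection $p$. For (3), I would build the maps using (1) and (2): $(u,p)$ is injective because $u : S_j^{(t)} \to S_{j+1}^{(t+1)}$ already is, and the composite $(p,-u)\circ(u,p) = p\circ u - u\circ p$ vanishes by the commutation relation $u \circ p = p \circ u$ recorded just before the lemma. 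The map $(p,-u)$ is surjective because $p: S_{j+1}^{(t+1)} \to S_{j+1}^{(t)}$ already is, and exactness in the middle is then forced by the length count $\ell(S_{j+1}^{(t+1)} \oplus S_j^{(t-1)}) = 2t = \ell(S_j^{(t)}) + \ell(S_{j+1}^{(t)})$.

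For (4), I would splice (2) applied at $(j,t-1)$, namely $0 \to S_{j-t+1} \xrightarrow{u^{t-1}} S_j^{(t)} \xrightarrow{p} S_j^{(t-1)} \to 0$, with (1) applied at $(j+1,t-1)$, namely $0 \to S_j^{(t-1)} \xrightarrow{u} S_{j+1}^{(t)} \xrightarrow{p^{t-1}} S_{j+1} \to 0$, at the common term $S_j^{(t-1)}$; the resulting middle map is $u\circ p = p\circ u$, and four-term exactness is automatic. The main obstacle I anticipate is purely bookkeeping: matching the indices $j, t$ through the subobject chains and keeping the overloaded symbols $u, p$ aligned with the correct source and target in each spot. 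No new homological input is needed beyond the uniserial structure and the identity $u\circ p = p\circ u$ which the paper has already recalled.
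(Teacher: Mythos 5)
Your proof is correct. The paper does not actually prove this lemma -- it simply cites \cite[Thm.\ 2.2]{SimSkow} and \cite[Lem.\ A.1]{RW} -- so your argument supplies the standard verification that the references contain: (1) and (2) are read off from the uniserial subobject chain of $S_j^{(t+1)}$, (3) is the almost split sequence checked by injectivity of $u$, surjectivity of $p$, the relation $u\circ p=p\circ u$, and a length count for middle exactness, and (4) is the splice of (2) and (1) at $S_j^{(t-1)}$. All index bookkeeping checks out (including the degenerate cases $t=1$ in (3) and (4), where $S_j^{(0)}=0$ and the sequences remain exact), so nothing is missing.
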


In the following we consider the non-zero morphisms in the tube category \rm{\textbf{T}}$_n$.

\begin{prop}\label{non-zero morphism by soc or top between objects} For any objects $S_{j_i}^{(t_i)}\; (i=1,2)$ in {\rm{\textbf{T}}$_n$},
\begin{itemize}
  \item [(1)] if $t_1\geq t_2$, then $\Hom(S_{j_1}^{(t_1)}, S_{j_2}^{(t_2)})\neq 0$ if and only if $\top(S_{j_1}^{(t_1)})$ belongs to the composition factors set of $S_{j_2}^{(t_2)}$;
  \item [(2)] if $t_1\leq t_2$, then $\Hom(S_{j_1}^{(t_1)}, S_{j_2}^{(t_2)})\neq 0$ if and only if $\soc(S_{j_2}^{(t_2)})$ belongs to the composition factors set of $S_{j_1}^{(t_1)}$.
\end{itemize}
\end{prop}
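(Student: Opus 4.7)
The plan is to exploit the uniserial structure of $\textbf{T}_n$ recorded in the preceding discussion: every non-zero morphism $f:S_{j_1}^{(t_1)}\to S_{j_2}^{(t_2)}$ is determined (up to scalar) by its image, and that image must simultaneously be a quotient of $S_{j_1}^{(t_1)}$ and a subobject of $S_{j_2}^{(t_2)}$. Using the chain of subobjects displayed in the excerpt, the non-zero subobjects of $S_{j_2}^{(t_2)}$ are precisely the $S_{j_2-t_2+s}^{(s)}$ with $1\leq s\leq t_2$, while dualising (or combining sequences (1) and (2) of Lemma \ref{exact seqs in tube}) gives that the non-zero quotients of $S_{j_1}^{(t_1)}$ are exactly the $S_{j_1}^{(r)}$ with $1\leq r\leq t_1$.

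From this I would deduce the key intermediate statement: $\Hom(S_{j_1}^{(t_1)},S_{j_2}^{(t_2)})\neq 0$ if and only if there exists an integer $s$ with $1\leq s\leq \min(t_1,t_2)$ such that
\[
j_1\equiv j_2-t_2+s \pmod n.
\]
The ``only if'' direction follows because the image of any non-zero $f$ is forced to be a single object of the form $S_{j_1}^{(s)}=S_{j_2-t_2+s}^{(s)}$, while the ``if'' direction is obtained by constructing the composite $S_{j_1}^{(t_1)}\twoheadrightarrow S_{j_1}^{(s)}\hookrightarrow S_{j_2}^{(t_2)}$ using the canonical projections $p$ and inclusions $u$ from Lemma \ref{exact seqs in tube}.

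From here the two bullets are essentially a translation. For (1), where $t_1\geq t_2$, the bound $s\leq\min(t_1,t_2)=t_2$ is automatic, so the condition becomes $j_1\equiv j_2-t_2+s\pmod n$ for some $1\leq s\leq t_2$; equivalently, $S_{j_1}=\top(S_{j_1}^{(t_1)})$ lies in the composition factor set $\{S_{j_2-t_2+1},\dots,S_{j_2}\}$ of $S_{j_2}^{(t_2)}$. For (2), where $t_1\leq t_2$, the bound becomes $s\leq t_1$ and one re-indexes by $k:=t_1-s+1\in\{1,\dots,t_1\}$, which recasts the condition as $j_2-t_2+1\equiv j_1-t_1+k\pmod n$ for some such $k$; equivalently, $\soc(S_{j_2}^{(t_2)})=S_{j_2-t_2+1}$ lies in the composition factor set $\{S_{j_1-t_1+1},\dots,S_{j_1}\}$ of $S_{j_1}^{(t_1)}$.

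The only step that requires a small amount of care is the ``only if'' direction of the intermediate statement, namely that the image of a non-zero map really is indecomposable and hence of the form $S_{j_1}^{(s)}$. This follows from the fact that every subobject of an indecomposable uniserial object $S_{j_2}^{(t_2)}$ is itself indecomposable uniserial, so in particular the image is indecomposable and its identification as both a quotient of the source and a subobject of the target pins it down uniquely via the uniserial chains above. I do not expect any other obstacle; the rest is bookkeeping with the index $s$ and the composition factor sets.
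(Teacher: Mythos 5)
Your proof is correct and takes essentially the same route as the paper's: both directions rest on factoring a non-zero map through its image, which is simultaneously a quotient of the uniserial source and a subobject of the uniserial target, and conversely composing the canonical epimorphism with the canonical monomorphism; your explicit congruence $j_1\equiv j_2-t_2+s \pmod n$ is just a more combinatorial packaging of the paper's socle/top bookkeeping. One harmless caveat: the parenthetical claim that a non-zero morphism is determined up to scalar by its image is false in general (e.g.\ $\Hom(S^{(2)},S^{(3)})$ in a homogeneous tube is two-dimensional and contains non-proportional maps with the same image), but your argument never actually uses it.
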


\begin{proof}
We only prove the second statement, since the proof for the first one is dual.
First we assume there exists $0\not= f \in \Hom(S_{j_1}^{(t_1)}, S_{j_2}^{(t_2)})$. Then there is a decomposition of  $f: S_{j_1}^{(t_1)} \twoheadrightarrow\im f \hookrightarrow S_{j_2}^{(t_2)}$.
It follows that $\soc(S_{j_2}^{(t_2)})=\soc(\im f)$, which is a composition factor of $\im f$, and hence a composition factor of $S_{j_1}^{(t_1)}$.

On the other hand, assume $\soc(S_{j_2}^{(t_2)})$ belongs to the composition factors set of  $S_{j_1}^{(t_1)}$.
Let $S_{j_1}^{(t)}$ with $t\leq n$ be the unique object such that  $\soc(S_{j_1}^{(t)})=\soc(S_{j_2}^{(t_2)})$. By assumption, $\soc(S_{j_2}^{(t_2)})$ belongs to the composition factors set of $S_{j_1}^{(t_1)}$, we get $t\leq t_1$, hence there is an epimorphism $ S_{j_1}^{(t_1)}\twoheadrightarrow S_{j_1}^{(t)}$. Since $t\leq t_1\leq t_2$ and $\soc(S_{j_1}^{(t)})=\soc(S_{j_2}^{(t_2)})$, there is a monomorphism $S_{j_1}^{(t)}\hookrightarrow  S_{j_2}^{(t_2)}$.
Therefore, there is a non-zero morphism $ S_{j_1}^{(t_1)}\twoheadrightarrow S_{j_1}^{(t)}\hookrightarrow  S_{j_2}^{(t_2)}$. Hence $\Hom(S_{j_1}^{(t_1)}, S_{j_2}^{(t_2)})\neq 0$.
\end{proof}

\begin{prop}\label{non-zero morphism between objects} For any objects $S_{j_i}^{(t_i)}\; (i=1,2)$ in {\rm{\textbf{T}}$_n$}, $\Hom(S_{j_1}^{(t_1)}, S_{j_2}^{(t_2)})\neq 0$ if and only if the following hold:
\begin{itemize}
  \item [(1)] $\top(S_{j_1}^{(t_1)})$ belongs to the composition factors set of $S_{j_2}^{(t_2)}$;
  \item [(2)] $\soc(S_{j_2}^{(t_2)})$ belongs to the composition factors set of $S_{j_1}^{(t_1)}$.
\end{itemize}
\end{prop}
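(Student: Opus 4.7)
The plan is to derive both implications from Proposition \ref{non-zero morphism by soc or top between objects}, exploiting the uniserial structure of $\textbf{T}_n$ to track tops and socles through an image factorization.

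For the ``only if'' direction, I would pick any non-zero $f \in \Hom(S_{j_1}^{(t_1)}, S_{j_2}^{(t_2)})$ and factor it through its image as $S_{j_1}^{(t_1)} \twoheadrightarrow \im f \hookrightarrow S_{j_2}^{(t_2)}$. By uniseriality, every non-zero quotient of $S_{j_1}^{(t_1)}$ appears on the chain $S_{j_1}^{(t_1)} \twoheadrightarrow S_{j_1}^{(t_1-1)} \twoheadrightarrow \cdots \twoheadrightarrow S_{j_1}$, so $\top(\im f) = S_{j_1} = \top(S_{j_1}^{(t_1)})$; dually, every non-zero subobject of $S_{j_2}^{(t_2)}$ contains its socle, so $\soc(\im f) = \soc(S_{j_2}^{(t_2)})$. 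Condition (1) then follows because $\top(\im f)$ is a composition factor of $\im f$, which embeds into $S_{j_2}^{(t_2)}$, and composition factors of a subobject are composition factors of the ambient object; condition (2) is the dual statement obtained from the surjection $S_{j_1}^{(t_1)} \twoheadrightarrow \im f$.

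For the ``if'' direction, I would simply invoke Proposition \ref{non-zero morphism by soc or top between objects}: when $t_1 \geq t_2$, condition (1) together with part (1) of that proposition produces a non-zero morphism; when $t_1 \leq t_2$, condition (2) together with part (2) does the same, and when $t_1 = t_2$ either half suffices. The main thing to watch for is the classification of quotients of $S_{j_1}^{(t_1)}$ and subobjects of $S_{j_2}^{(t_2)}$ used in the forward direction, but this is already recorded by the uniserial chain $0 \subsetneq S_{j_2-t_2+1} \subsetneq \cdots \subsetneq S_{j_2}^{(t_2)}$ at the start of this section, so no genuinely new ingredient is required beyond careful bookkeeping of tops, socles, and composition factors.
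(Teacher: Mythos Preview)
Your proposal is correct and follows essentially the same approach as the paper's proof: both directions are handled identically, with the forward implication obtained by factoring through the image and reading off top and socle via uniseriality, and the backward implication obtained by a case split on $t_1 \geq t_2$ versus $t_1 \leq t_2$ (the paper uses $t_1 < t_2$) that reduces to Proposition~\ref{non-zero morphism by soc or top between objects}.
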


\begin{proof}
First we assume there exists $0\not= f \in \Hom(S_{j_1}^{(t_1)}, S_{j_2}^{(t_2)})$. Then there is a decomposition of  $f: S_{j_1}^{(t_1)} \twoheadrightarrow\im f \hookrightarrow S_{j_2}^{(t_2)}$.
It follows that $\top(S_{j_1}^{(t_1)})=\top(\im f)$, which is a composition factor of $\im f$, and hence a composition factor of $S_{j_2}^{(t_2)}$. Similarly, $\soc(S_{j_2}^{(t_2)})=\soc(\im f)$, which is a composition factor of $\im f$, and hence a composition factor of $S_{j_1}^{(t_1)}$. Hence (1) and (2) hold.

On the other hand, if $t_1\geq t_2$, since $\top(S_{j_1}^{(t_1)})$ belongs to the composition factors set of $S_{j_2}^{(t_2)}$,  by Proposition \ref{non-zero morphism by soc or top between objects} (1), we get $\Hom(S_{j_1}^{(t_1)}, S_{j_2}^{(t_2)})\neq 0$; otherwise,  $t_1< t_2$, since $\soc(S_{j_2}^{(t_2)})$ belongs to the composition factors set of $S_{j_1}^{(t_1)}$,  by Proposition \ref{non-zero morphism by soc or top between objects} (2), we get $\Hom(S_{j_1}^{(t_1)}, S_{j_2}^{(t_2)})\neq 0$. Then we are done.
\end{proof}

\begin{cor}\label{non-zero morphism between heigh objects} For any objects $S_{j_i}^{(t_i)}\; (i=1,2)$ in {\rm{\textbf{T}}$_n$} with $t_1\geq n$,
\begin{itemize}
  \item [(1)] $\Hom(S_{j_1}^{(t_1)}, S_{j_2}^{(t_2)})\neq 0$ if and only if $S_{j_1}$ belongs to the composition factors set of $S_{j_2}^{(t_2)}$;
  \item [(2)] $\Hom(S_{j_2}^{(t_2)}, S_{j_1}^{(t_1)})\neq 0$
  if and only if $S_{j_1-t_1+1}$ belongs to the composition factors set of $S_{j_2}^{(t_2)}$.
\end{itemize}
 In particular, if $t_2\geq n$, then $\Hom(S_{j_1}^{(t_1)}, S_{j_2}^{(t_2)})\neq 0\neq \Hom(S_{j_2}^{(t_2)}, S_{j_1}^{(t_1)})$.
\end{cor}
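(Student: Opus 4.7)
The plan is to reduce this corollary directly to Proposition \ref{non-zero morphism between objects} by exploiting the special hypothesis $t_1\geq n$. The key observation is the one already noted in the subsection preamble: as soon as the length of an indecomposable in $\textbf{T}_n$ reaches $n$, its composition factor set becomes the full set $\{S_0,S_1,\dots,S_{n-1}\}$ of simple objects. Consequently, whenever a condition of the form ``some simple $S_k$ lies in the composition factor set of $S_{j_1}^{(t_1)}$'' appears, it is automatically satisfied, and the corresponding clause of Proposition \ref{non-zero morphism between objects} becomes trivial.

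With that in mind, for part (1) I would apply Proposition \ref{non-zero morphism between objects} to the pair $(S_{j_1}^{(t_1)}, S_{j_2}^{(t_2)})$: condition (2) of that proposition, requiring $\soc(S_{j_2}^{(t_2)})$ to appear among the composition factors of $S_{j_1}^{(t_1)}$, is automatic by the above observation, so non-vanishing of $\Hom$ is equivalent to condition (1), namely that $\top(S_{j_1}^{(t_1)}) = S_{j_1}$ lies in the composition factor set of $S_{j_2}^{(t_2)}$. For part (2), I would apply the same proposition to the reversed pair $(S_{j_2}^{(t_2)}, S_{j_1}^{(t_1)})$: this time it is clause (1) that is automatic, since $\top(S_{j_2}^{(t_2)})$ certainly belongs to the full composition factor set of $S_{j_1}^{(t_1)}$, and what remains is clause (2), which demands $\soc(S_{j_1}^{(t_1)})$ lie in the composition factors of $S_{j_2}^{(t_2)}$. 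The uniserial description recalled before Lemma \ref{exact seqs in tube} identifies $\soc(S_{j_1}^{(t_1)}) = S_{j_1-t_1+1}$, giving exactly the stated condition.

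The ``in particular'' clause then follows at once: if $t_2\geq n$ as well, then the composition factor set of $S_{j_2}^{(t_2)}$ is itself $\{S_0,\dots,S_{n-1}\}$, so both $S_{j_1}$ and $S_{j_1-t_1+1}$ lie in it, and parts (1) and (2) give the simultaneous non-vanishing. There is no real obstacle here; the only care needed is to match $\top(S_j^{(t)})=S_j$ and $\soc(S_j^{(t)})=S_{j-t+1}$ with the indices of the arrow $\tau(S_i)=S_{i-1}$ and to keep the indices read modulo $n$ throughout.
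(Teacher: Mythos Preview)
Your proposal is correct and follows essentially the same approach as the paper: both arguments reduce directly to Proposition \ref{non-zero morphism between objects}, using that $t_1\geq n$ forces the composition factor set of $S_{j_1}^{(t_1)}$ to be all of $\{S_0,\dots,S_{n-1}\}$, so one of the two clauses in that proposition becomes automatic in each direction, and the remaining clause is identified via $\top(S_{j_1}^{(t_1)})=S_{j_1}$ and $\soc(S_{j_1}^{(t_1)})=S_{j_1-t_1+1}$.
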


\begin{proof} The assumption $t_1\geq n$ implies that the composition factor set of $S_{j_1}^{(t_1)}$ is $\{S_0, S_1, \cdots, S_{n-1}\}$, which already contains $\top(S_{j_2}^{(t_2)})$ and $\soc(S_{j_2}^{(t_2)})$. Observe that $\soc(S_{j_1}^{(t_1)})=S_{j_1-t_1+1}$ and $\top(S_{j_1}^{(t_1)})=S_{j_1}$, then the result follows from Proposition \ref{non-zero morphism between objects}.
\end{proof}

The following result plays a key role in this section.

\begin{lem}\label{finitely many extension closed subcategory} There are only finitely many subcategories of {\rm{\textbf{T}}}$_n$ which are closed under extensions and direct summands.
\end{lem}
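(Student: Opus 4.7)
The plan is to encode each such subcategory $\mathcal{C}$ by a finite amount of combinatorial data and then bound the number of admissible pieces of data. Since $\mathcal{C}$ is closed under direct summands it is determined by the set of indecomposable objects it contains, and every indecomposable of $\textbf{T}_n$ has the form $S_j^{(t)}$ with $(j,t)\in\mathbb{Z}/n\mathbb{Z}\times\mathbb{Z}_{\geq 1}$. Thus $\mathcal{C}$ is recovered from the tuple of height sets $T_j(\mathcal{C})=\{t\geq 1:S_j^{(t)}\in\mathcal{C}\}$ for $j\in\mathbb{Z}/n\mathbb{Z}$, and I aim to show that only finitely many such tuples can arise.

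The first step is to derive propagation rules from Lemma \ref{exact seqs in tube}. The Auslander-Reiten sequence Lemma \ref{exact seqs in tube}(3) has a decomposable middle term, so extension closure combined with direct-summand closure immediately forces: if $t\in T_j(\mathcal{C})\cap T_{j+1}(\mathcal{C})$ then $t-1\in T_j(\mathcal{C})$ (whenever $t\geq 2$) and $t+1\in T_{j+1}(\mathcal{C})$. From the uniserial subobject chain one also obtains, for every $t_1,t_2\geq 1$, the tower exact sequence
\[
0\longrightarrow S_{j-t_1}^{(t_2)}\longrightarrow S_j^{(t_1+t_2)}\longrightarrow S_j^{(t_1)}\longrightarrow 0,
\]
which generalises Lemma \ref{exact seqs in tube}(2). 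Whenever the corresponding $\Ext^1$ is non-zero (a condition that Propositions \ref{non-zero morphism by soc or top between objects} and \ref{non-zero morphism between objects} translate into an explicit top/socle/composition-factor criterion), extension closure yields $t_1+t_2\in T_j(\mathcal{C})$ provided $t_1\in T_j(\mathcal{C})$ and $t_2\in T_{j-t_1}(\mathcal{C})$. Specialising to $t_2=n$ gives a ``shift-by-$n$'' rule for $T_j(\mathcal{C})$.

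Combining these rules with the observation that $\Ext^1(S_j^{(t)},S_j^{(t)})\cong D\Hom(S_j^{(t)},S_{j-1}^{(t)})$ is non-zero precisely when $t\geq n$ (which follows from Proposition \ref{non-zero morphism between objects}, since in that range both the top of $S_j^{(t)}$ and the socle of $S_{j-1}^{(t)}$ lie in the composition-factor set of the other), I expect the following dichotomy for each $T_j(\mathcal{C})$: either $T_j(\mathcal{C})\subseteq\{1,\ldots,n-1\}$, and hence is already a subset of a finite set; or $T_j(\mathcal{C})$ contains some $t\geq n$, in which case iterated self-extensions together with the shift-by-$n$ rule force $T_j(\mathcal{C})$ to be eventually periodic of period $n$ and determined by a finite initial segment. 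In either case each $T_j(\mathcal{C})$ is described by finite data, and the cross-ray AR propagation rule couples the tuple $(T_0(\mathcal{C}),\ldots,T_{n-1}(\mathcal{C}))$ in such a way that only finitely many admissible configurations remain.

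The hard part will be to justify the eventual periodicity rigorously and to classify the admissible initial segments. This reduces to a careful analysis of the indecomposable summands that can occur in the middle terms of non-split extensions among long indecomposables: one needs to show that every such middle is either already of the form predicted by the tower sequence or decomposes into summands whose heights feed back into the $T_j(\mathcal{C})$ in a controlled way. The uniserial structure of $\textbf{T}_n$ together with the Hom-criteria of Propositions \ref{non-zero morphism by soc or top between objects}--\ref{non-zero morphism between objects} should make this case analysis feasible, possibly assisted by a numerical argument exploiting that the Grothendieck group of $\textbf{T}_n$ has rank $n$, but some bookkeeping seems unavoidable.
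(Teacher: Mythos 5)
There is a genuine gap, and it sits exactly where you write that ``the hard part will be'' to justify the periodicity and classify the initial segments: none of the rules you actually state produces any \emph{downward} propagation along a single ray, and without that the argument cannot be closed. Your AR-sequence rule only applies when the same length occurs on two adjacent rays, and your tower rule (for which, incidentally, no non-vanishing of $\Ext^1$ is needed, since the displayed sequence is exact in any case) only increases lengths. For instance, starting from a single object $S_j^{(t)}$ with $n\mid t$, your rules generate only the objects $S_j^{(kt)}$, $k\ge 1$, whereas the genuine closure under extensions and direct summands already contains $S_j^{(t-n)}$. Moreover, even granting ``eventual $n$-periodicity'' of each height set $T_j(\mathcal{C})$, that alone does not yield finiteness: there are infinitely many eventually $n$-periodic subsets of $\mathbb{Z}_{\ge 1}$, so one must bound the preperiod \emph{uniformly}, and your sketch provides no mechanism for doing so.

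The paper supplies precisely this missing mechanism by one explicit pushout--pullback computation: combining the tower sequence $0\to S_j^{((r-1)n+l)}\to S_j^{(rn+l)}\to S_j^{(n)}\to 0$ with itself (shifted by $n$) yields a self-extension with \emph{decomposable} middle term,
\begin{equation*}
0\to S_{j}^{(rn+l)}\to S_{j}^{((r-1)n+l)}\oplus S_{j}^{((r+1)n+l)}\to S_{j}^{(rn+l)}\to 0,
\end{equation*}
so that $S_j^{(rn+l)}\in\mathcal{C}$ forces $S_j^{((r\pm 1)n+l)}\in\mathcal{C}$ for any subcategory $\mathcal{C}$ closed under extensions and direct summands. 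Iterating in both directions shows that membership of $S_j^{(rn+l)}$, $r\ge 1$, is equivalent to membership of $S_j^{(n+l)}$ (for $1\le l\le n-1$), respectively of $S_j^{(n)}$ (for $l=n$); hence $\mathcal{C}$ is determined by which of the finitely many indecomposables of length smaller than $2n$ it contains, and finiteness follows at once. This zig-zag sequence is exactly the ``analysis of the indecomposable summands in the middle terms'' that you postponed; once it is in hand, your dichotomy becomes both true and sufficient, but as written your proposal does not establish it.
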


\begin{proof}
For any $0\leq j\leq n-1$ and $1\leq l\leq n$, using Lemma \ref{exact seqs in tube}, one obtains the following exact sequence
$$0\to S_{j}^{((r-1)n+l)}\to S_{j}^{(rn+l)}\to S_{j}^{(n)}\to 0  \quad(r\geq 1).$$
Then the following pushout-pullback commutative diagram
$$\xymatrix{
 0\ar[r] &S_{j}^{(rn+l)} \ar[r]\ar[d]& S_{j}^{((r+1)n+l)} \ar[r]\ar[d]& S_{j}^{(n)} \ar@{=}[d] \ar[r]& 0\\
 0\ar[r] & S_{j}^{((r-1)n+l)} \ar[r]&S_{j}^{(rn+l)}\ar[r]& S_{j}^{(n)} \ar[r]& 0 }
$$
yields a short exact sequence
$$0\to S_{j}^{(rn+l)}\to S_{j}^{((r-1)n+l)}\oplus S_{j}^{((r+1)n+l)}\to S_{j}^{(rn+l)}\to 0.$$
Hence $S_{j}^{((r\pm1)n+l)}\in \langle S_{j}^{(rn+l)}\rangle$.
Then for $r\geq 1$, we obtain that $\langle S_{j}^{(rn)}\rangle=\langle S_{j}^{(n)}\rangle$, and $\langle S_{j}^{(rn+l)}\rangle=\langle S_{j}^{(n+l)}\rangle$ for $1\leq l\leq n-1$. Note that there are only finitely many indecomposable objects in ${\rm{\textbf{T}}}_n$ with length smaller than $2n$. Then the result follows.
\end{proof}

\subsection{Finest stability data on the  tube categories}
In this subsection, we will describe the semistable subcategories for finest stability data on the tube category \rm{\textbf{T}}$_n$.

\begin{prop}\label{finest stab data for cyclic quiver} Any stability data on {\rm{\textbf{T}}}$_n$ can be refined to a finest one.
\end{prop}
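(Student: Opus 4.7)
The plan is to iterate the local refinement construction from Theorem \ref{iff cond for finest} and use the finiteness from Lemma \ref{finitely many extension closed subcategory} to ensure the process terminates.

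First, I would normalize the stability data: given $(\Phi,\{\Pi_\varphi\}_{\varphi\in\Phi})$ on $\textbf{T}_n$, delete any $\varphi$ with $\Pi_\varphi=0$ to obtain an equivalent stability data in which every semistable subcategory is non-zero. Then distinct phases yield distinct semistable subcategories, since for any non-zero $X\in\Pi_{\varphi_1}$ we have $\bm\phi(X)=\varphi_1\neq\varphi_2$, so $X\notin\Pi_{\varphi_2}$. Moreover, each $\Pi_\varphi$ is an extension-closed subcategory of $\textbf{T}_n$ which is closed under direct summands (the latter follows from the lemma preceding Section 2.2). Therefore $\Phi$ injects into the set of extension-closed subcategories of $\textbf{T}_n$ closed under direct summands, and by Lemma \ref{finitely many extension closed subcategory} this set is finite. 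In particular, there exists a constant $N_n$ (depending only on $n$) such that $|\Phi|\leq N_n$ for every stability data on $\textbf{T}_n$.

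Next, I would invoke Theorem \ref{iff cond for finest} as the engine of refinement. If $(\Phi,\{\Pi_\varphi\}_{\varphi\in\Phi})$ is not finest, there exist $\varphi\in\Phi$ and non-zero $X,Y\in\Pi_\varphi$ with $\Hom(X,Y)=0$ (after possibly swapping $X$ and $Y$). The construction in the proof of Theorem \ref{iff cond for finest} produces a strictly finer stability data $(\Psi,\{\Pi_\psi\}_{\psi\in\Psi})$, where $\Psi=(\Phi\setminus\{\varphi\})\cup\{\varphi_-,\varphi_+\}$ is obtained by splitting $\Pi_\varphi$ into $\Pi_{\varphi_+}$ and $\Pi_{\varphi_-}$; in particular $|\Psi|=|\Phi|+1$.

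Iterating this one-step refinement yields a chain of stability data on $\textbf{T}_n$ whose indexing sets have strictly increasing cardinalities. Since $|\Phi|$ is uniformly bounded above by $N_n$, the chain must terminate after finitely many steps, and by Theorem \ref{iff cond for finest} the terminal stability data is finest. This proves the proposition. The only subtle point worth double-checking is that after the normalization every $\Pi_{\varphi_\pm}$ arising in a refinement step is still non-zero, which is immediate from the construction since $X\in\Pi_{\varphi_+}$ and $Y\in\Pi_{\varphi_-}$ are non-zero by assumption.
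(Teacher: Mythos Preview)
Your proof is correct and follows essentially the same approach as the paper: both use Lemma \ref{finitely many extension closed subcategory} to bound the number of possible semistable subcategories and then iterate the refinement from Theorem \ref{iff cond for finest} until termination. Your version is a bit more explicit in extracting a uniform bound $N_n$ on $|\Phi|$ and verifying that each refinement step strictly increases $|\Phi|$, but the underlying argument is the same.
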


\begin{proof} Let $(\Phi,\{\Pi_{\varphi}\}_{\varphi\in\Phi})$ be a stability data on $\textbf{T}_n$. If it is not finest, then we can make local refinement to obtain a finer stability data. But there are only finitely many ways to make refinement since there are only finitely many candidates for semistable subcategories of $\textbf{T}_n$ by Lemma
\ref{finitely many extension closed subcategory}. Hence the local refinement procedures will stop after finite steps, which yields a finest stability data.
\end{proof}

From now onward, we always fix a finest stability data
$(\Phi,\{\Pi_{\varphi}\}_{\varphi\in\Phi})$ on $\textbf{T}_n$.
Then we have the following results.

\begin{lem} \label{different semistable subcategories}
 For any two distinct semistable objects $S_{j_i}^{(t_i)}$ in {\rm{\textbf{T}}$_n$} with $t_i\leq n,\;i=1,2$, we have $\bm \phi(S_{j_1}^{(t_1)})\neq\bm \phi(S_{j_2}^{(t_2)})$.
 \end{lem}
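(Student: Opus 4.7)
The plan is to argue by contradiction, inducting on the total length $t_1+t_2$. Suppose for contradiction that two distinct indecomposables $S_{j_1}^{(t_1)}$ and $S_{j_2}^{(t_2)}$ with $t_i\leq n$ both lie in the same $\Pi_\varphi$. Since $(\Phi,\{\Pi_\varphi\}_{\varphi\in\Phi})$ is finest, Theorem~\ref{iff cond for finest} forces $\Hom(S_{j_1}^{(t_1)},S_{j_2}^{(t_2)})\neq 0\neq\Hom(S_{j_2}^{(t_2)},S_{j_1}^{(t_1)})$. The base case $t_1=t_2=1$ is immediate, since two distinct simples in $\textbf{T}_n$ admit no nonzero morphisms in either direction.

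The main tool for the inductive step is the standard auxiliary fact that for any nonzero morphism $f:X\to Y$ between objects of $\Pi_\varphi$, the image $\im f$ again lies in $\Pi_\varphi$. This follows from the HN-filtration together with the Hom-vanishing axiom in Definition~\ref{defn of stab data}(1): projecting $X\twoheadrightarrow\im f$ onto the minimal semistable quotient of $\im f$ gives $\bm \phi^{-}(\im f)\geq\varphi$, while dually the inclusion $\im f\hookrightarrow Y$ gives $\bm \phi^{+}(\im f)\leq\varphi$.

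Without loss of generality assume $t_1\geq t_2$, and choose a nonzero $f:S_{j_1}^{(t_1)}\to S_{j_2}^{(t_2)}$. Because $\textbf{T}_n$ is uniserial, $\im f\cong S_{j_1}^{(r)}$ for a unique $1\leq r\leq t_2$. If $r=t_2$ (so $f$ is surjective), the identification of tops forces $j_1=j_2$; Proposition~\ref{non-zero morphism by soc or top between objects}(2) together with $t_1\leq n$ then shows that $\Hom(S_{j_1}^{(t_2)},S_{j_1}^{(t_1)})=0$ whenever $t_1>t_2$, so we must have $t_1=t_2$, contradicting distinctness of the two objects. If instead $r<t_2$, then $\im f$ is a proper subobject of $S_{j_2}^{(t_2)}$, lies in $\Pi_\varphi$ by the auxiliary fact, is distinct from $S_{j_2}^{(t_2)}$ (they have different lengths), and has length $r\leq n$. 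Since the total length $r+t_2$ is strictly smaller than $t_1+t_2$, the inductive hypothesis supplies the contradiction.

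The main obstacle I anticipate is the auxiliary fact that $\im f\in\Pi_\varphi$; it is standard in stability theory but is not explicitly isolated earlier in the paper, so I would establish it carefully using the HN-filtration before running the induction. Once that step is in place, the remainder is combinatorial bookkeeping with the uniserial description of subobjects and quotients in the tube, together with a single application of Proposition~\ref{non-zero morphism by soc or top between objects} to eliminate the surjective case.
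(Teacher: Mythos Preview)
Your argument is correct. The paper's proof reaches the same contradiction but by a slightly different and non-inductive route. Where you establish $\im f\in\Pi_\varphi$ via the standard HN-filtration argument (bounding $\bm\phi^\pm(\im f)$ using the surjection from $X$ and the injection into $Y$), the paper instead builds an explicit short exact sequence
\[
0\to S_{j_2}^{(t_2)}\to S_{j_2}^{(t)}\oplus S_{j_1}^{(t_1+t_2-t)}\to S_{j_1}^{(t_1)}\to 0
\]
and invokes closure of $\Pi_\varphi$ under extensions and direct summands to conclude $\im f=S_{j_2}^{(t)}\in\Pi_\varphi$. After that, rather than recursing on total length, the paper observes directly that $\soc(\im f)=\soc(S_{j_1}^{(t_1)})$ and $t<t_1$ force $\top(S_{j_1}^{(t_1)})$ to be absent from the composition factors of $\im f$, so $\Hom(S_{j_1}^{(t_1)},\im f)=0$, contradicting Theorem~\ref{iff cond for finest} in one step. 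Your approach has the advantage that the auxiliary fact about images is a general stability-theoretic statement independent of the tube combinatorics; the paper's approach is shorter once one has the explicit exact sequence, and it avoids setting up an induction.
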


\begin{proof}
For contradiction we assume there exist $S_{j_i}^{(t_i)}$ with $t_i\leq n \;(i=1,2)$ belong to the same semistable subcategory $\Pi_{\varphi}$ for some $\varphi\in\Phi$. By Theorem \ref{iff cond for finest}, we have
\begin{equation}\label{non zero homo between two}\Hom(S_{j_1}^{(t_1)}, S_{j_2}^{(t_2)})\neq 0\neq \Hom(S_{j_2}^{(t_2)}, S_{j_1}^{(t_1)}).
  \end{equation}
Let $0\not= f \in \Hom(S_{j_2}^{(t_2)}, S_{j_1}^{(t_1)})$. Then $\im f=S_{j_2}^{(t)}$ for some $t\leq \min\{t_1, t_2\}$.
If $t=t_1$, then $\soc(S_{j_2}^{(t)})=\soc(S_{j_1}^{(t_1)})$ implies $j_1=j_2$, yielding a contradiction to \eqref{non zero homo between two}.
Hence $t<t_1$.
Now we have the following commutative diagram:
$$\xymatrix{
 0\ar[r] &S_{j_2}^{(t_2)} \ar[r]\ar[d]& S_{j_1}^{(t_2+t_1-t)} \ar[r]\ar[d]& S_{j_1}^{(t_1-t)}  \ar@{=}[d] \ar[r]& 0\\
 0\ar[r] & S_{j_2}^{(t)} \ar[r]&S_{j_1}^{(t_1)}\ar[r]& S_{j_1}^{(t_1-t)} \ar[r]& 0 }
$$
which yields a short exact sequence
$$0\to S_{j_2}^{(t_2)}\to S_{j_2}^{(t)}\oplus S_{j_1}^{(t_2+t_1-t)}\to S_{j_1}^{(t_1)}\to 0.$$
Since $\Pi_{\varphi}$ is closed under extensions and direct summands, we see that
$S_{j_2}^{(t)} \in \Pi_{\varphi}$. Since $\soc(S_{j_2}^{(t)})=\soc(S_{j_1}^{(t_1)})$ and $t <t_1$, $\top (S_{j_1}^{(t_1)})$ does not belong to the composition factor set of $S_{j_2}^{(t)}$. Then  by Proposition \ref{non-zero morphism by soc or top between objects} (1),  we see that $\Hom(S_{j_1}^{(t_1)}, S_{j_2}^{(t)})= 0$, a contradiction. Then we are done.
\end{proof}

\begin{lem} \label{at most n-t+1 semistable objects with length t}
For any $1\leq t\leq n$, there are at most $(n-t+1)$-many indecomposable semistable objects of length $t$ in {\rm{\textbf{T}}}$_n$.
\end{lem}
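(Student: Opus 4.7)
The plan is to suppose we have $k$ pairwise distinct indecomposable semistable objects $S_{j_1}^{(t)}, S_{j_2}^{(t)},\ldots,S_{j_k}^{(t)}$ of length $t$ (with $t\leq n$), and to show that $k\leq n-t+1$. By Lemma \ref{different semistable subcategories} these objects lie in pairwise distinct semistable subcategories, so after relabelling we may assume the phases are strictly decreasing, $\bm \phi(S_{j_1}^{(t)})>\bm \phi(S_{j_2}^{(t)})>\cdots>\bm \phi(S_{j_k}^{(t)})$. Axiom (1) of Definition \ref{defn of stab data} then forces $\Hom(S_{j_a}^{(t)}, S_{j_b}^{(t)})=0$ whenever $1\leq a<b\leq k$.

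The next step is to translate this Hom-vanishing into a purely combinatorial condition on the indices $j_1,\ldots,j_k\in \mathbb{Z}/n\mathbb{Z}$. Since source and target have the same length $t$, part (1) of Proposition \ref{non-zero morphism by soc or top between objects} gives $\Hom(S_{j_a}^{(t)},S_{j_b}^{(t)})\neq 0$ if and only if $S_{j_a}=\top(S_{j_a}^{(t)})$ belongs to the composition factor set $\{S_{j_b-t+1},\ldots,S_{j_b}\}$ of $S_{j_b}^{(t)}$, i.e.\ if and only if $j_b-j_a\pmod n\in\{0,1,\ldots,t-1\}$. Hence the vanishing condition for each pair $a<b$ becomes $j_b-j_a\pmod n\in\{t,t+1,\ldots,n-1\}$.

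Finally, set $q_b:=j_b-j_1\pmod n$ for $1\leq b\leq k$. Then $q_1=0$, and the previous step applied to the pair $(1,b)$ yields $q_b\in\{t,t+1,\ldots,n-1\}$ for $b\geq 2$. Applied to pairs $2\leq a<b$, it also gives $q_b-q_a\equiv j_b-j_a\pmod n\in\{t,\ldots,n-1\}$, which is nonzero, so the values $q_2,\ldots,q_k$ are pairwise distinct. Therefore they constitute $k-1$ distinct elements of the set $\{t,t+1,\ldots,n-1\}$, whose cardinality is $n-t$, so $k-1\leq n-t$ and $k\leq n-t+1$. The substantive inputs are Lemma \ref{different semistable subcategories} (distinct phases for distinct semistable objects of length $\leq n$) and Proposition \ref{non-zero morphism by soc or top between objects} (the explicit Hom criterion); once these have been invoked to reduce matters to a statement about differences in $\mathbb{Z}/n\mathbb{Z}$, the bound is an immediate pigeonhole count, and I do not expect any subtle obstacle beyond keeping track of the indexing conventions for tops, socles and composition factor sets.
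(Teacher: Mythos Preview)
Your proof is correct and rests on the same two ingredients as the paper's (Lemma \ref{different semistable subcategories} and Proposition \ref{non-zero morphism by soc or top between objects}), but the logical organisation is different. The paper orders the $j_i$ by their natural size in $\{0,\ldots,n-1\}$, assumes there are at least $n-t+2$ of them, and shows by a pigeonhole on the cyclic gaps that every consecutive pair (including the wraparound) has gap at most $t-1$; this produces a closed cycle of nonzero morphisms, forcing all phases to coincide and contradicting Lemma \ref{different semistable subcategories}. You instead order the objects by phase, deduce one-directional Hom-vanishing for every pair, and translate this into the constraint $j_b-j_1\pmod n\in\{t,\ldots,n-1\}$ for $b\geq 2$, from which a direct pigeonhole gives $k\leq n-t+1$. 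Your argument is slightly more economical (direct rather than by contradiction, and it only needs the Hom-vanishing conditions involving the object of largest phase together with pairwise distinctness), while the paper's cycle picture is perhaps more geometrically suggestive; both are entirely sound.
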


\begin{proof}
Note that any simple object $S_{j}^{} \;(0\leq j\leq n-1)$ is stable since it has no non-trivial subobjects. Then the result holds for $t=1$.
For contradiction we assume there exists $2\leq t\leq n$, such that the number of indecomposable semistable objects of length $t$ is greater than $n-t+1$. Assume they are given by $S_{j_i}^{(t)}$ ($1\leq j\leq n-t+k$) for some $k\geq 2$, where $0\leq j_1 <j_2< \cdots <j_{n-t+k}\leq n-1$.
Clearly, $1\leq j_{i+1}-j_{i} \leq t-1$ for each $i$ since $(t-1)+(n-t+k)>n$.
It follows that $\Hom(S_{j_i}^{(t)}, S_{j_{i+1}}^{(t)})\neq 0$ since $\top(S_{j_{i}}^{(t)})$ belongs to the composition factors set of $S_{j_{i+1}}^{(t)}$.
Since $j_{n-t+k} <j_{1}+n $, we have $\Hom(S_{j_{n-t+k}}^{(t)}, S_{j_{1}}^{(t)})\neq 0$ by similar arguments as above. Then there is a chain of non-zero morphisms $ S_{j_{1}}^{(t)}\rightarrow S_{j_{2}}^{(t)}\rightarrow \cdots \rightarrow S_{j_{n-t+k}}^{(t)} \rightarrow S_{j_{1}}^{(t)}$, which implies $\bm \phi(S_{j_{1}}^{(t)})=\bm \phi(S_{j_{2}}^{(t)})=\cdots=\bm \phi(S_{j_{n-t+k}}^{(t)})$,
 a contradiction to Lemma \ref{different semistable subcategories}. We are done.
\end{proof}

\begin{lem}\label{n height semistable objects in tube}
There exists a unique integer $0\leq j\leq n-1$, such that $S_{j}^{(n)}$ is semistable.
\end{lem}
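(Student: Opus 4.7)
Uniqueness is immediate from Lemma~\ref{at most n-t+1 semistable objects with length t} applied with $t = n$: there is at most $n - n + 1 = 1$ indecomposable semistable object of length $n$. For existence I would argue by contradiction. Suppose no $S_{j}^{(n)}$ is semistable; then each $S_{j}^{(n)}$ has an HN-filtration of length $m_{j} \geq 2$, and since $S_{j}^{(n)}$ is uniserial (so all of its subquotients are), the minimum semistable quotient is forced to have the form $A_{m_{j}}^{j} = S_{j}^{(d_{j})}$ with $1 \leq d_{j} \leq n-1$.

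The main structural step is an extremal argument driven by the phases of the simples. The simples $S_{0},\dots,S_{n-1}$ are stable with pairwise distinct phases by Lemma~\ref{different semistable subcategories}; let $S_{j^{*}}$ denote the one of minimum phase $\varphi^{*}$. I first claim that the minimum-phase semistable indecomposable among all objects of length $\leq n$ is precisely $S_{j^{*}}$. Indeed, if $X = S_{a}^{(\ell)}$ realizes this minimum with $\ell \geq 2$, then its socle $S_{a-\ell+1}$ is a semistable subobject of the semistable $X$, so $\bm\phi(S_{a-\ell+1}) \leq \bm\phi(X) = \varphi^{*}$; minimality of $\varphi^{*}$ forces equality, and Lemma~\ref{different semistable subcategories} gives $S_{a-\ell+1} = X$, contradicting $\ell \geq 2$. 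Now $S_{j^{*}}$ is a semistable quotient of $S_{j^{*}}^{(n)}$ (being its top) attaining the globally minimum phase among semistables of length $\leq n$; since $A_{m_{j^{*}}}^{j^{*}}$ is by definition the minimum-phase semistable quotient of $S_{j^{*}}^{(n)}$, Lemma~\ref{different semistable subcategories} forces $A_{m_{j^{*}}}^{j^{*}} = S_{j^{*}}$, hence $d_{j^{*}} = 1$. A dual argument applied to the maximum-phase simple $S_{j_{N}}$ shows that the maximum semistable subobject of $S_{j_{N}-1}^{(n)}$ is $S_{j_{N}}$ itself.

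From here the plan is to propagate these rigidity constraints iteratively until a contradiction emerges. The second-to-last term in the HN-filtration of $S_{j^{*}}^{(n)}$ is $S_{j^{*}-1}^{(n-1)}$, which is itself related to $S_{j^{*}-1}^{(n)}$ through the short exact sequence $0 \to S_{j^{*}} \to S_{j^{*}-1}^{(n)} \to S_{j^{*}-1}^{(n-1)} \to 0$; combining this with the analogous extremal analyses at neighboring positions produces further constraints on the $d_{j}$ and on the lengths $b_{1}^{j}$ of the maximum semistable subobjects. The main obstacle will be closing up this iterative argument in the cyclic setting of $\textbf{T}_{n}$: one needs to follow how the shifts $j \mapsto j - d_{j}$ and $j \mapsto j + b_{1}^{j}$ interact around the cycle, and show they must eventually either produce two distinct semistables of length $\leq n$ sharing a phase (contradicting Lemma~\ref{different semistable subcategories}) or yield more than $n - t + 1$ semistables of some length $t$ (contradicting Lemma~\ref{at most n-t+1 semistable objects with length t}).
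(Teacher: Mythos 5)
Your uniqueness step is exactly the paper's (Lemma \ref{at most n-t+1 semistable objects with length t} with $t=n$), and the partial facts you prove for existence are correct: the minimum-phase semistable indecomposable of length $\le n$ is the minimum-phase simple $S_{j^*}$, hence the last HN factor of $S_{j^*}^{(n)}$ is $S_{j^*}$ (so $d_{j^*}=1$), and dually at the maximum-phase simple. But the existence proof is not finished: from that point on you only describe a plan ("propagate these rigidity constraints iteratively") and explicitly flag the closing of the cycle as an unresolved obstacle. No contradiction is ever derived, and it is not clear the propagation works: the next object you would need to control, $S_{j^*-1}^{(n-1)}$, is not governed by the extremality of $\bm\phi(S_{j^*})$ in any evident way, so the constraints you have remain local and do not obviously accumulate into a violation of Lemma \ref{different semistable subcategories} or Lemma \ref{at most n-t+1 semistable objects with length t}. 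As written this is a sketch with a genuine gap, not a proof.

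The paper's argument avoids any cyclic bookkeeping by taking the extremum in \emph{length} rather than in phase. Assuming no $S_j^{(n)}$ is semistable, choose $t<n$ maximal such that some $S_j^{(t)}$ is semistable; then no object of length in $(t,n]$ is semistable. The last HN factor of $S_j^{(n)}$ is a semistable quotient $S_j^{(s)}$ with $s\le t$ (by Lemma \ref{maxsubss}); since $S_j^{(t)}$ surjects onto $S_j^{(s)}$ we get $\bm\phi(S_j^{(t)})\le\bm\phi(S_j^{(s)})$, while minimality of the last factor gives the reverse inequality, so $s=t$ by Lemma \ref{different semistable subcategories}; dually, $S_j^{(t)}$ is the maximal semistable subobject of $S_{j-t}^{(n)}$. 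The two exact sequences $0\to S_{j-t}^{(n-t)}\to S_j^{(n)}\to S_j^{(t)}\to 0$ and $0\to S_j^{(t)}\to S_{j-t}^{(n)}\to S_{j-t}^{(n-t)}\to 0$ then force $\bm\phi(S_j^{(t)})<\bm\phi^{-}(S_{j-t}^{(n-t)})$ and $\bm\phi^{+}(S_{j-t}^{(n-t)})<\bm\phi(S_j^{(t)})$ simultaneously, an immediate contradiction at a single position. If you wish to salvage your phase-extremal setup you must supply the missing global step; the maximal-length choice is precisely what makes a two-sided phase estimate available without iterating around the tube.
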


\begin{proof}
First we assume that each $S_{j}^{(n)}$ is not semistable for $0\leq j\leq n-1$. Note that any simple object is semistable. Then
there exist $0\leq j\leq n-1$ and $1\leq t<n$, such that $S_{j}^{(t)}$ is semistable, but $S_{i}^{(k)}$ is not semistable for any $t<k\leq n$ and $0\leq i\leq n-1$. Consider the following exact sequences
$$ 0\rightarrow S_{j-t}^{(n-t)} \rightarrow S_{j}^{(n)} \rightarrow S_{j}^{(t)}\rightarrow0,\quad\text{and}\quad
0\rightarrow S_{j}^{(t)} \rightarrow S_{j-t}^{(n)} \rightarrow S_{j-t}^{(n-t)}\rightarrow0.$$
Since  $S_{j}^{(t)}$ is the minimal semistable quotient object of $S_{j}^{(n)}$, we get $\bm \phi(S_{j}^{(t)}) <\bm \phi^{-}(S_{j-t}^{(n-t)})$. Similarly, since $S_{j}^{(t)}$ is the maximal semistable subobject of $S_{j-t}^{(n)}$, we get $\bm \phi^{+}(S_{j-t}^{(n-t)}) <\bm \phi(S_{j}^{(t)})$. Hence $\bm \phi(S_{j}^{(t)}) < \bm \phi^{-}(S_{j-t}^{(n-t)}) \leq \bm \phi^{+}(S_{j-t}^{(n-t)}) <\bm \phi(S_{j}^{(t)})$, a contradiction.
Therefore, there exists at least one $S_{j}^{(n)}$ which is semistable.
The uniqueness of $S_{j}^{(n)}$ follows from Lemma \ref{at most n-t+1 semistable objects with length t}. We are done.
\end{proof}

\begin{lem}\label{non semistable objects} For any $0\leq i\leq n-1$, $1\leq s<n$ and $r\geq 1$, $S_{i}^{(rn+s)}$ is not semistable.
\end{lem}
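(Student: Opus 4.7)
The plan is to argue by contradiction: assume $S_{i}^{(rn+s)}\in\Pi_\varphi$ is semistable with $r\geq 1$ and $1\leq s<n$, and produce another object of $\Pi_\varphi$ whose Hom into the unique length-$n$ semistable object $S_{j}^{(n)}$ (Lemma~\ref{n height semistable objects in tube}) vanishes, violating Theorem~\ref{iff cond for finest}. As a first step I locate $S_{j}^{(n)}$ inside $\Pi_\varphi$: since both $S_{i}^{(rn+s)}$ and $S_{j}^{(n)}$ have length $\geq n$, Corollary~\ref{non-zero morphism between heigh objects} gives $\Hom(S_{i}^{(rn+s)},S_{j}^{(n)})\neq 0\neq\Hom(S_{j}^{(n)},S_{i}^{(rn+s)})$; combined with the one-directional Hom-vanishing $\Hom(\Pi_{\varphi'},\Pi_{\varphi''})=0$ for $\varphi'>\varphi''$, this forces $S_{j}^{(n)}\in\Pi_\varphi$.

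Next I show $S_{i}^{(s)}\in\Pi_\varphi$. Iterating Lemma~\ref{exact seqs in tube} (or a direct composition-factor inspection) yields the two short exact sequences
$$0\to S_{i-s}^{(rn)}\to S_{i}^{(rn+s)}\to S_{i}^{(s)}\to 0 \quad\text{and}\quad 0\to S_{i}^{(s)}\to S_{i}^{(rn+s)}\to S_{i}^{(rn)}\to 0.$$
Composing the surjection in the first sequence with the projection onto the minimal semistable quotient of $S_{i}^{(s)}$ forces $\bm\phi^{-}(S_{i}^{(s)})\geq\varphi$; composing the inclusion in the second sequence with the inclusion of the maximal semistable subobject of $S_{i}^{(s)}$ forces $\bm\phi^{+}(S_{i}^{(s)})\leq\varphi$. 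Together with $\bm\phi^{-}\leq\bm\phi^{+}$, this pins $S_{i}^{(s)}$ into $\Pi_\varphi$.

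Now Theorem~\ref{iff cond for finest} applied to $S_{j}^{(n)},S_{i}^{(s)}\in\Pi_\varphi$, together with Proposition~\ref{non-zero morphism between objects}, requires the composition factor set $\{S_{i-s+1},\dots,S_{i}\}$ of $S_{i}^{(s)}$ to contain both $\top(S_{j}^{(n)})=S_{j}$ and $\soc(S_{j}^{(n)})=S_{j+1}$. This is already impossible for $s=1$ (done); for $s\geq 2$ it forces $j=i-l$ with $1\leq l\leq s-1$. A nonzero map $f\colon S_{j}^{(n)}\to S_{i}^{(s)}$ then has image $S_{j}^{(s-l)}$, the unique length-$(s-l)$ subobject of $S_{i}^{(s)}$ with top $S_{j}$. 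Running the HN-bounding argument of the previous paragraph on the factorization $S_{j}^{(n)}\twoheadrightarrow S_{j}^{(s-l)}\hookrightarrow S_{i}^{(s)}$ places $S_{j}^{(s-l)}$ inside $\Pi_\varphi$. But the composition factor set $\{S_{j-s+l+1},\dots,S_{j}\}$ of $S_{j}^{(s-l)}$ consists of $s-l<n$ consecutive residues ending at $S_{j}$ and therefore omits $S_{j+1}=\soc(S_{j}^{(n)})$; Proposition~\ref{non-zero morphism between objects} then gives $\Hom(S_{j}^{(s-l)},S_{j}^{(n)})=0$, contradicting Theorem~\ref{iff cond for finest}.

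The technical heart, used twice, is the HN-bounding trick: for a semistable object $A\in\Pi_\varphi$ and a surjection (resp.\ injection) $A\twoheadrightarrow B$ (resp.\ $B\hookrightarrow A$), chasing through the minimal semistable quotient (resp.\ maximal semistable subobject) of $B$ and invoking the cross-phase Hom-vanishing axiom corners $\bm\phi^{-}(B)\geq\varphi$ (resp.\ $\bm\phi^{+}(B)\leq\varphi$); once this is in place, the rest reduces to a short combinatorial inspection of composition factor sets via Proposition~\ref{non-zero morphism between objects}.
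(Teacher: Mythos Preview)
Your argument is correct, but it takes a noticeably different path from the paper's. The paper obtains $S_{i}^{(s)}\in\Pi_\varphi$ in one stroke: from the pushout/pullback diagram one gets a single short exact sequence
\[
0\longrightarrow S_{i}^{(rn+s)}\longrightarrow S_{i}^{(s)}\oplus S_{i}^{(2rn+s)}\longrightarrow S_{i}^{(rn+s)}\longrightarrow 0,
\]
and then closure of $\Pi_\varphi$ under extensions and direct summands immediately yields $S_{i}^{(s)}\in\Pi_\varphi$. You instead use two separate sub/quotient sequences and the HN-bounding trick. For the contradiction, the paper simply cites Lemma~\ref{different semistable subcategories}: once $S_{j}^{(n)}$ and $S_{i}^{(s)}$ both lie in $\Pi_\varphi$ with $s<n$, that lemma forbids it outright. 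You bypass Lemma~\ref{different semistable subcategories} entirely, reproving the needed special case by locating the image $S_{j}^{(s-l)}$ of a map $S_{j}^{(n)}\to S_{i}^{(s)}$, forcing it into $\Pi_\varphi$ via a second HN-bounding, and then checking $\Hom(S_{j}^{(s-l)},S_{j}^{(n)})=0$ by composition-factor inspection. Your route is more self-contained (it does not rely on Lemma~\ref{different semistable subcategories}) and showcases a reusable sandwiching technique, but the paper's proof is considerably shorter by leveraging the extension/summand closure of $\Pi_\varphi$ together with the already established Lemma~\ref{different semistable subcategories}.
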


\begin{proof}
Assume $S_{i}^{(rn+s)}$ is semistable with phase $\varphi$ for some $0\leq i\leq n-1$, $1\leq s<n$ and $r\geq 1$. Using Lemma \ref{exact seqs in tube}, one can obtain the following short exact sequence
$$0\to S_{i}^{(rn+s)}\to S_{i}^{(s)}\oplus S_{i}^{(2rn+s)}\to S_{i}^{(rn+s)}\to 0.$$
Since $\Pi_{\varphi}$ is closed under extensions and direct summands, we see that
$S_{i}^{(s)} \in \Pi_{\varphi}$.

On the other hand, by Lemma \ref{n height semistable objects in tube}, there exists a unique $0\leq j\leq n-1$, such that $S_{j}^{(n)}$ is semistable. By Corollary \ref{non-zero morphism between heigh objects}, we know that $\Hom(S_{j}^{(n)}, S_{i}^{(rn+s)})\neq 0\neq \Hom(S_{i}^{(rn+s)}, S_{j}^{(n)})$. Then  we obtain that $\bm \phi(S_{j}^{(n)})=\bm \phi(S_{i}^{(rn+s)})=\varphi=\bm \phi(S_{i}^{(s)})$,
 a contradiction to Lemma \ref{different semistable subcategories}. We are done.
\end{proof}

Denote by $f(t)$ the number of indecomposable semistable objects of length $t$ in {\rm{\textbf{T}}}$_n$. Combining with Lemmas \ref{at most n-t+1 semistable objects with length t}, \ref{n height semistable objects in tube} and \ref{non semistable objects}, we have:
\begin{equation*}
f(t)=
\begin{cases}
n, & \text{if $t=1;$}\\
1,& \text{if $n|t;$}\\
\leq n-t+1, & \text{if $2\leq t< n;$}\\
0, & \text{if else.}\\
\end{cases}
\end{equation*}
Moreover, if $\bm \phi(S_{0}^{})<\bm \phi(S_{1}^{})<\cdots<\bm \phi(S_{n-1}^{})$, then $f(t)=n-t+1$ holds for any $1\leq t\leq n$.

\begin{prop}
Each semistable subcategory $\Pi_{\varphi}$ has the form $\langle S_{j}^{(s)} \rangle$,  where $0\leq j\leq n-1$ and $1\leq s\leq n$.
\end{prop}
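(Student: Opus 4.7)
The plan is to identify the indecomposable objects of $\Pi_\varphi$ and show that they all sit inside $\langle S_j^{(s)}\rangle$ for a single pair $(j,s)$ with $1\le s\le n$. Since $\Pi_\varphi$ is closed under extensions and direct summands, the Krull--Schmidt property of $\textbf{T}_n$ reduces the problem to deciding which indecomposables $S_i^{(t)}$ belong to $\Pi_\varphi$. Every such $S_i^{(t)}$ is semistable of phase $\varphi$, so by Lemma~\ref{non semistable objects} its length $t$ must be either strictly less than $n$ or a positive multiple of $n$.

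I then split into two cases. First, suppose $\Pi_\varphi$ contains some indecomposable $S_j^{(rn)}$ with $r\ge 1$. The computation inside the proof of Lemma~\ref{finitely many extension closed subcategory} shows $\langle S_j^{(rn)}\rangle=\langle S_j^{(n)}\rangle$, so $S_j^{(n)}\in\Pi_\varphi$; by Lemma~\ref{n height semistable objects in tube} this $j$ is the unique index for which $S_j^{(n)}$ is semistable. Lemma~\ref{different semistable subcategories} then blocks $\Pi_\varphi$ from containing any further indecomposable of length at most $n$, and any other $S_i^{(r'n)}\in\Pi_\varphi$ would force $S_i^{(n)}$ to be semistable of phase $\varphi$ and hence $i=j$. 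Thus every indecomposable summand in $\Pi_\varphi$ lies in $\langle S_j^{(n)}\rangle$, yielding $\Pi_\varphi=\langle S_j^{(n)}\rangle$.

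In the complementary case, $\Pi_\varphi$ contains no indecomposable whose length is a multiple of $n$, so every indecomposable in $\Pi_\varphi$ has length strictly less than $n$. Lemma~\ref{different semistable subcategories} then forces all such indecomposables to be isomorphic to a single $S_j^{(s)}$ with $1\le s<n$, giving $\Pi_\varphi=\langle S_j^{(s)}\rangle$.

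The only delicate point I anticipate is ruling out a ``mixed'' configuration in which $\Pi_\varphi$ would contain both an indecomposable of length below $n$ and one whose length is a multiple of $n$. This collapses immediately, because the latter drags $S_j^{(n)}$ into $\Pi_\varphi$, and $S_j^{(n)}$ cannot share a phase with any distinct indecomposable of length $\le n$ by Lemma~\ref{different semistable subcategories}. Once that is observed, the two cases above are exhaustive and the proposition follows.
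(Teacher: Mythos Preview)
Your proof is correct and follows essentially the same line as the paper's own argument: both rely on Lemma~\ref{non semistable objects} to restrict the lengths of indecomposable semistables, on the computation in Lemma~\ref{finitely many extension closed subcategory} to reduce $\langle S_j^{(rn)}\rangle$ to $\langle S_j^{(n)}\rangle$, on Lemma~\ref{n height semistable objects in tube} for uniqueness at length $n$, and on Lemma~\ref{different semistable subcategories} to exclude two distinct indecomposables of length $\le n$ sharing a phase. The paper simply records these ingredients without writing out the case split you make explicit.
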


\begin{proof}
By Lemma \ref{non semistable objects} we know that any semistable object in {\rm{\textbf{T}}$_n$} has the possible form $S_{j}^{(t)}$ for some $0\leq j\leq n-1$, where $1\leq t< n$ or $n|t$.
According to the proof of Lemma \ref{finitely many extension closed subcategory}, we obtain that $\langle S_{j}^{(rn)}\rangle=\langle S_{j}^{(n)}\rangle$ for any $r\geq 1$. Moreover, by Lemma \ref{n height semistable objects in tube}, there exists a unique indecomposable semistable object of length $n$.
Then by  Lemma \ref{different semistable subcategories}, any two distinct semistable objects $S_{j_1}^{(t_1)}$ and $S_{j_2}^{(t_2)}$ with $t_i\leq n\; (i=1,2)$ have different phases.
We are done.
\end{proof}

Using the above results, we can classify all the finest stability data on any tube category via combinatorial method. In the following we state the classification result for the tube category {\rm{\textbf{T}}$_3$} as an example.

\begin{exam}
Up to $\tau$-actions, there are four equivalent classes of finest stability data $(\Phi,\{\Pi_{\varphi}\}_{\varphi\in\Phi})$ on {\rm{\textbf{T}}$_3$} as follows:

\begin{itemize}
\item[(1)] $\Phi =\{ 1, 2, 3, 4, 5 \}$;

\begin{table}[h]
\begin{tabular}{|c|c|c|c|c|}
\hline
\makecell*[c]{$\Pi_{1}$}&$\Pi_{2}$&$\Pi_{3}$&$\Pi_{4}$&$\Pi_{5}$\\
\hline
\makecell*[c]{$\langle S_0\rangle$}&$\langle S^{}_{2}\rangle$&$\langle S^{(3)}_{1}\rangle$&$\langle  S^{(2)}_{1}\rangle$&$\langle S_1\rangle$\\
\hline
\makecell*[c]{$\langle S_0\rangle$}&$\langle S^{(2)}_{1}\rangle$&$\langle S^{(3)}_{2}\rangle$&$\langle  S^{}_{2}\rangle$&$\langle S_1\rangle$\\
\hline
\end{tabular}
\label{I5}
\end{table}

\newpage

\item[(2)] $\Phi =\{ 1, 2, 3, 4, 5, 6\}$;

\begin{table}[h]
\begin{tabular}{|c|c|c|c|c|c|}
\hline
\makecell*[c]{$\Pi_{1}$}&$\Pi_{2}$&$\Pi_{3}$&$\Pi_{4}$&$\Pi_{5}$&$\Pi_{6}$\\
\hline
\makecell*[c]{$\langle S_0\rangle$}&$\langle S^{(2)}_{1}\rangle$&$\langle S_1\rangle$&$\langle S^{(3)}_{2}\rangle$&$\langle S^{(2)}_{2}\rangle$&$\langle S_2\rangle$\\
\hline
\makecell*[c]{$\langle S_0\rangle$}&$\langle S^{(2)}_{1}\rangle$&$\langle S^{(3)}_{2}\rangle$&$\langle S_1\rangle$&$\langle S^{(2)}_{2}\rangle$&$\langle S_2\rangle$\\
\hline
\end{tabular}
\label{I6}
\end{table}
\end{itemize}
\end{exam}

\subsection{Torsion pairs in tube categories}
We have already obtained all possible semistable subcategories for  finest stability data on the tube category \rm{\textbf{T}}$_n$. This
enables us to classify torsion pairs in \rm{\textbf{T}}$_n$.
\begin{thm}\label{classification of torsion pairs in a tube}
$(\cal{T}, \cal{F})$ is a torsion pair in {\rm{\textbf{T}}$_n$} if and only if one of the following holds (up to $\tau$-actions):
\begin{itemize}
  \item [(1)] $\cal{T}$ is a torsion class in $\langle S_1,S_2, \cdots, S_{n-1}\rangle$, and $\cal{F}=\cal{T}^{\perp}$;
   \item [(2)] $\cal{F}$ is a torsionfree class in $\langle S_0,S_1, \cdots, S_{n-2}\rangle$, and $\cal{T}={}^{\perp}\cal{F}$.
\end{itemize}
\end{thm}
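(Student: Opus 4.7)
The plan is to prove both directions of the equivalence. For the ``if'' direction, each of $\langle S_1,\ldots,S_{n-1}\rangle$ and $\langle S_0,\ldots,S_{n-2}\rangle$ is a Serre subcategory of $\textbf{T}_n$, because its indecomposables are exactly those $S_{i}^{(t)}$ whose composition factors avoid the distinguished missing simple, and this condition is preserved under subobjects, quotients and extensions. Hence a torsion class $\mathcal{T}$ in $\langle S_1,\ldots,S_{n-1}\rangle$ is automatically closed under extensions and quotients inside $\textbf{T}_n$, and since $\textbf{T}_n$ is of finite length this closure is enough to make $(\mathcal{T},\mathcal{T}^{\perp})$ a torsion pair on $\textbf{T}_n$; case (2) is dual.

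For the ``only if'' direction, I would start from an arbitrary torsion pair $(\mathcal{T},\mathcal{F})$, form the associated two-term stability data $(\{-,+\},\{\mathcal{F},\mathcal{T}\})$, and refine it to a finest stability data using Proposition~\ref{finest stab data for cyclic quiver}. By Lemma~\ref{n height semistable objects in tube} there is a unique index $j$ with $S_j^{(n)}$ semistable, and $S_j^{(n)}$ lies entirely on one side of the cut, splitting the problem into two cases.

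In Case A ($S_j^{(n)}\in\mathcal{T}$), closure of $\mathcal{T}$ under quotients gives $S_j^{(m)}\in\mathcal{T}$ for every $1\leq m\leq n$. I would then show $\mathcal{F}\subseteq\langle S_0,\ldots,\widehat{S_j},\ldots,S_{n-1}\rangle$: if some indecomposable $Y=S_i^{(t)}\in\mathcal{F}$ had $S_j$ as a composition factor, the uniserial chain of subobjects of $Y$ from Section~4.1 would provide a subobject $S_j^{(m)}\hookrightarrow Y$ with $1\leq m\leq n$ (take the lowest position at which $S_j$ occurs in the composition series), and then $S_j^{(m)}$ would lie in both $\mathcal{T}$ and $\mathcal{F}$, contradicting $\mathcal{T}\cap\mathcal{F}=0$. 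Applying $\tau^{j+1-n}$ to normalize $S_j$ to $S_{n-1}$ places the pair in form (2). Case B ($S_j^{(n)}\in\mathcal{F}$) is dual: the chain of subobjects $S_{j+k}^{(k)}$ of $S_j^{(n)}$ for $1\leq k\leq n$ (all of which have socle $S_{j+1}$) lies in $\mathcal{F}$, and for an indecomposable $Y=S_i^{(t)}\in\mathcal{T}$ with $S_{j+1}$ among its composition factors one extracts the quotient $S_i^{(l)}$ of $Y$ of length $l\in\{1,\ldots,n\}$ with $l\equiv i-j$ modulo $n$ and socle $S_{j+1}$. The top--socle matching identifies this quotient with the subobject $S_{j+l}^{(l)}$ of $S_j^{(n)}$, giving the analogous contradiction, so $\mathcal{T}\subseteq\langle S_{j+2},\ldots,S_j\rangle$. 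Applying $\tau^{j+1}$ then yields form (1).

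The main obstacle is securing the bound $m,l\leq n$ in the two cases, since it is this bound that lets us realize the subobject $S_j^{(m)}$ of $Y$ as a quotient of $S_j^{(n)}\in\mathcal{T}$ in Case A, and realize the quotient $S_i^{(l)}$ of $Y$ as a subobject of $S_j^{(n)}\in\mathcal{F}$ in Case B. Once the combinatorics of the socle/top indices from Lemma~\ref{exact seqs in tube} and the uniserial description makes these identifications, the vanishing $\mathcal{T}\cap\mathcal{F}=0$ closes both cases.
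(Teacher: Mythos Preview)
Your proposal is correct and follows the same overall architecture as the paper: both directions use that $\langle S_1,\ldots,S_{n-1}\rangle$ is a Serre subcategory (so torsion/torsionfree classes there remain such in $\textbf{T}_n$), and the converse proceeds by refining the two-term stability data to a finest one (Proposition~\ref{finest stab data for cyclic quiver}), invoking the unique semistable $S_j^{(n)}$ from Lemma~\ref{n height semistable objects in tube}, and splitting according to which side it lies on.

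The only genuine difference is in how the inclusion $\mathcal{F}\subseteq\langle S_i\mid i\neq j\rangle$ (resp.\ $\mathcal{T}\subseteq\langle S_i\mid i\neq j+1\rangle$) is established. The paper dispatches this in one line via the Hom-orthogonality: from $S_{j}^{(n)}\in\mathcal{T}$ one gets $\mathcal{F}=\mathcal{T}^{\perp}\subseteq (S_j^{(n)})^{\perp}$, and Corollary~\ref{non-zero morphism between heigh objects} identifies $(S_j^{(n)})^{\perp}=\langle S_i\mid i\neq j\rangle$ immediately (and dually ${}^{\perp}(S_j^{(n)})=\langle S_i\mid i\neq j+1\rangle$). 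Your argument instead unpacks this same fact by hand, locating an explicit common indecomposable in $\mathcal{T}\cap\mathcal{F}$ via the uniserial combinatorics. Both are valid; the paper's route is shorter because the needed Hom-computation has already been packaged in Corollary~\ref{non-zero morphism between heigh objects}, whereas your version is self-contained and makes the role of the bound $m,l\le n$ fully explicit.
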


\begin{proof}
If $\cal{T}$ is a torsion class in $\langle S_1,S_2, \cdots, S_{n-1}\rangle$, then $\cal{T}$ is
closed under extensions and quotient objects in $\langle S_1,S_2, \cdots, S_{n-1}\rangle$ and also in \rm{\textbf{T}}$_n$. Hence $\cal{T}$ is a  torsion class in \rm{\textbf{T}}$_n$ by
\cite[Lem. 1.1.3]{Po}. Then $(\cal{T}, \cal{T}^{\perp})$ is a torsion pair in \rm{\textbf{T}}$_n$.

Similarly, if $\cal{F}$ is a torsionfree class in $\langle S_0,S_1, \cdots, S_{n-2}\rangle$, then $\cal{F}$ is
closed under extensions and subobjects in $\langle S_0,S_1, \cdots, S_{n-2}\rangle$ and also in \rm{\textbf{T}}$_n$. Hence $\cal{F}$ is a torsionfree class in \rm{\textbf{T}}$_n$ by the dual of \cite[Lem. 1.1.3]{Po}. Then $({}^{\perp}\cal{F}, \cal{F})$ is a torsion pair in \rm{\textbf{T}}$_n$.

On the other hand, for any torsion torsion pair $(\cal{T}, \cal{F})$ in \rm{\textbf{T}}$_n$, by Propositions \ref{torsion pair construction} and \ref{finest stab data for cyclic quiver}, there exists a finest stability data $(\Phi,\{\Pi_{\varphi}\}_{\varphi\in\Phi})$ and a decomposition $\Phi=\Phi_-\cup\Phi_{+}$, such that $$\cal{T}=\langle\Pi_{\varphi}\mid\varphi\in\Phi_+ \rangle,\quad\cal{F}=\langle\Pi_{\varphi}\mid\varphi\in\Phi_-\rangle.$$
By Lemma \ref{n height semistable objects in tube}, we can assume, up to $\tau$-actions on \rm{\textbf{T}}$_n$, $S_{n-1}^{(n)}$ is the unique semistable object under $(\Phi,\{\Pi_{\varphi}\}_{\varphi\in\Phi})$. Then $S_{n-1}^{(n)}\in\mathcal{F}$ or $S_{n-1}^{(n)}\in\mathcal{T}$. We consider the following two cases.

(1) If $S_{n-1}^{(n)}\in\mathcal{F}$, then $$\cal{T}={}^{\perp}\cal{F}\subseteq {}^{\perp}(S _{n-1}^{(n)})=\langle S_1,S_2, \cdots, S_{n-1}\rangle.$$
      Since $\cal{T}$ is a torsion class in \rm{\textbf{T}}$_n$, it is closed under extensions and quotient objects. The same statements holds in the subcategory $\langle S_1,S_2, \cdots, S_{n-1}\rangle$. Hence $\cal{T}$ is a torsion class in $\langle S_1,S_2, \cdots, S_{n-1}\rangle$. Obviously, by definition we have $\cal{F}=\cal{T}^{\perp}$.

(2) If $S_{n-1}^{(n)}\in\mathcal{T}$, then $$\cal{F}=\cal{T}^{\perp}\subseteq (S_{n-1}^{(n)})^{\perp}=\langle S_0,S_1, \cdots, S_{n-2}\rangle.$$
      Since $\cal{F}$ is a torsionfree class in \rm{\textbf{T}}$_n$, it is closed under extensions and subobjects. The same statements holds in the subcategory $\langle S_0,S_1, \cdots, S_{n-2}\rangle$. Hence $\cal{F}$ is a torsionfree class in $\langle S_0,S_1, \cdots, S_{n-2}\rangle$. It follows that $\cal{T}={}^{\perp}\cal{F}$ and we are done.
\end{proof}

\begin{rem} In \cite{BBM}, Baur-Buan-Marsh classified the torsion pairs in \rm{\textbf{T}}$_n$ via maximal rigid objects in the extension \textbf{$\overline{\textbf{T}}$}$_{n}$ of the tube category \rm{\textbf{T}}$_n$. Here, \textbf{$\overline{\textbf{T}}$}$_{n}$ is the subcategory of Mod-$\textbf{k} C_n$ for cyclic quiver $C_n$, whose objects are all filtered direct limits or
filtered inverse limits of objects in \rm{\textbf{T}}$_n$.

 More precisely, there are two different types of torsion pairs: the ray type and the coray type, where the ray type corresponds to the maximal rigid object of Pr\"ufer type in \textbf{$\overline{\textbf{T}}$}$_{n}$; and the coray type corresponds to the maximal rigid object of adic type in \textbf{$\overline{\textbf{T}}$}$_{n}$. In fact, up to $\tau$-actions, the torsion pairs in \textbf{$\overline{\textbf{T}}$}$_{n}$ of ray type (\emph{resp.} coray type) correspond to those in (1) (\emph{resp.} (2)) in Theorem \ref{classification of torsion pairs in a tube}.
\end{rem}
As an explanation of Theorem \ref{classification of torsion pairs in a tube}, we list all the torsion pairs in the tube category \rm{\textbf{T}}$_3$ as below.

\begin{exam}
Up to $\tau$-actions,  the non-trivial torsion pairs $(\cal{T}, \cal{F})$ in \rm{\textbf{T}}$_3$  are classified as below:

\begin{table}[h]
\caption{Non-trivial torsion pairs $(\cal{T}, \cal{F})$ in \rm{\textbf{T}}$_3$}
\begin{tabular}{|c|c|c|c|c|c|}
\hline
\multicolumn{2}{|c|}{Ray type}&&\multicolumn{2}{|c|}{Coray type}\\
\cline{1-2} \cline{4-5}
\makecell*[c]{$\cal{T}$}&$\cal{F}$&&$\cal{T}$&$\cal{F}$\\
\cline{1-2} \cline{4-5}
\makecell*[c]{$\langle S_2 \rangle$}&$\langle S^{}_{0},  S^{}_{1}, S^{(2)}_{2} \rangle$&&$\langle  S^{}_{1},  S^{}_{2}, S^{(2)}_{1} \rangle$&$\langle S_0 \rangle$ \\
\cline{1-2} \cline{4-5}
\makecell*[c]{$\langle S_2, S^{(2)}_{2}\rangle$}&$\langle S^{}_{0},  S^{}_{1}, S^{(3)}_{2}\rangle$&&$\langle S^{}_{1}, S^{}_{2},  S^{(3)}_{2}\rangle$&$\langle S_0, S^{(2)}_{1}\rangle$\\
\cline{1-2} \cline{4-5}
\makecell*[c]{$\langle S_1, S^{}_{2}\rangle$}&$\langle S^{}_{0},  S^{(2)}_{1}, S^{(3)}_{2} \rangle$&&$\langle  S^{}_{2}, S^{(2)}_{2}, S^{(3)}_{2} \rangle$&$\langle S_0, S^{}_{1} \rangle$\\
\hline
\end{tabular}
\label{table1}
\end{table}

\end{exam}

\section{Tame hereditary algebra}

In this section we investigate the stability data for tame hereditary algebras. We show that each stability data can be refined to a finest one for the category of finitely generated modules  over a tame hereditary algebra.

We recall from \cite{Ringel} and \cite{SimSkow} for the definition and the main properties for tame hereditary algebra $\Lambda=\textbf{k}Q$, where $Q$ is a tame quiver. The category  $\mod$-$\Lambda$ of finitely generated modules over $\Lambda$ is a hereditary abelian category.
Moreover, there is a decomposition of indecomposable $\Lambda$-modules: $\ind (\mod$-$\Lambda)=\cal{P}\vee\cal{R}\vee\cal{I}$,  where $\cal{P}$ is the postprojective component, $\cal{R}$ is the regular component, and $\cal{I}$ is the preinjective component.
The postprojective component $\cal{P}$ consists of modules $\tau^{-n}P_{i}, i\in Q_0, n\in \mathbb{Z}_{\geq 0}$, where $P_{i}$ are indecomposable projective modules, and each indecomposable module $M$ in $\cal{P}$ is directing with $\End(M) = \textbf{k}, \Ext^{i} (M, M) = 0$ for all $i \geq 1$; the regular component $\cal{R}$ consists of a family ${\textbf{T}}=\{ \Gamma({\textbf{T}_{{\textbf{\textit{x}}}}) \}_{{\textbf{\textit{x}}} \in \mathbb{P}^{1} }  }$ of pairwise orthogonal stable tubes $\Gamma(\textbf{T}_{{\textbf{\textit{x}}}})$, in particular, all but finitely many of the tubes $\Gamma(\textbf{T}_{{\textbf{\textit{x}}}})$ in $\cal{R}$ are homogeneous, and there are at most $|Q_0|-2$ non-homogeneous tubes;
and the preinjective component $\cal{I}$ consists of modules $\tau^{m}I_{i}, i\in Q_0, m\in \mathbb{Z}_{\geq 0}$, where $I_{i}$ are indecomposable injective modules, and
each indecomposable molule $N$ in $\cal{I}$ is also directing with $\End(N) = \textbf{k},  \Ext^{i} (N,N) = 0$ for all $i \geq 1$.

In the rest of this section we denote by $\cal{A}=\mod$-$\Lambda$, where $\Lambda$ is a tame hereditary algebra.

\begin{prop}\label{finest for tame hereditary alg} Each stability data on $\cal{A}=\mod$-$\Lambda$ can be refined to a finest one.
\end{prop}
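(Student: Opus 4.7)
The plan is to refine a given stability data block by block, using the canonical decomposition $\ind\mathcal{A}=\mathcal{P}\vee\mathcal{R}\vee\mathcal{I}$ of the tame hereditary category together with Theorem \ref{iff cond for finest}, Proposition \ref{local refinement construction}, and the already established Proposition \ref{finest stab data for cyclic quiver}. Since Proposition \ref{finest stab data for cyclic quiver} settles the tubes, the work concentrates on separating the blocks and on handling the directed parts $\mathcal{P}$ and $\mathcal{I}$.

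First I would use the standard Hom-orthogonalities
\[\Hom(\mathcal{R},\mathcal{P})=\Hom(\mathcal{I},\mathcal{P})=\Hom(\mathcal{I},\mathcal{R})=0,\qquad \Hom(\mathcal{R}_{\textbf{\textit{x}}},\mathcal{R}_{\textbf{\textit{y}}})=0\ \ (\textbf{\textit{x}}\neq \textbf{\textit{y}})\]
to separate the three regions: whenever some $\Pi_\varphi$ contains non-zero indecomposables from two distinct blocks, at least one of the two Hom-spaces between them vanishes, so by Theorem \ref{iff cond for finest} such a $\Pi_\varphi$ is not finest, and the construction inside its proof supplies a two-piece local stability data on $\Pi_\varphi$. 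Splicing via Proposition \ref{local refinement construction} and iterating, one arrives at a refinement in which each $\Pi_\varphi$ is contained in exactly one of $\langle\mathcal{P}\rangle$, a single $\langle\mathcal{R}_{\textbf{\textit{x}}}\rangle$, or $\langle\mathcal{I}\rangle$.

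Next I would refine inside each block and splice through Proposition \ref{local refinement construction}. For $\Pi_\varphi\subseteq\langle\mathcal{R}_{\textbf{\textit{x}}}\rangle$, Proposition \ref{finest stab data for cyclic quiver} yields a finest local stability data on $\Pi_\varphi$ directly. For $\Pi_\varphi\subseteq\langle\mathcal{P}\rangle$ (dually for $\langle\mathcal{I}\rangle$), every indecomposable $M$ is directing with $\End(M)=\textbf{k}$ and $\Ext^{1}(M,M)=0$, and any two non-isomorphic indecomposables of $\mathcal{P}$ are Hom-directed, meaning one of the two Hom-spaces is zero. Hence whenever $\Pi_\varphi$ contains two non-isomorphic indecomposables, Theorem \ref{iff cond for finest} again forces a splitting; iterating until each surviving piece has the form $\add M$ for a single directing $M$ produces finest pieces, because $\End(M)=\textbf{k}$ ensures Hom is non-zero in both directions on all non-zero objects of $\add M$.

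The main obstacle will be justifying this iteration inside $\langle\mathcal{P}\rangle$ and $\langle\mathcal{I}\rangle$, where infinitely many indecomposables may force the final $\Phi$ to be infinite, so one has to check that Definition \ref{defn of stab data} still holds in the limit. The key point is that $\mathcal{A}$ consists of finite-length modules, so for any object $X$ the length of its HN-filtration with respect to the final refinement is bounded by $\dim X$ and thus finite; this can be seen by decomposing $X$ into indecomposables and exploiting the linear Hom-order in each block together with $\tau$-finiteness. Equivalently, the argument can be phrased as a Zorn's lemma application on the poset of stability data finer than the given one, with upper bounds of chains supplied by the block-wise construction above, yielding a finest refinement as a maximal element.
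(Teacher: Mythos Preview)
Your overall strategy—separate each $\Pi_\varphi$ according to the decomposition $\mathcal{P}\vee\mathcal{R}\vee\mathcal{I}$ and by tube, then refine the directed pieces down to single indecomposables and invoke Proposition~\ref{finest stab data for cyclic quiver} on the tubes—is exactly the paper's approach. The difference is in execution, and your execution has a genuine gap.

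You propose to reach the block-separated refinement by \emph{iterating} the two-piece splitting from the proof of Theorem~\ref{iff cond for finest} and splicing via Proposition~\ref{local refinement construction}. But a single $\Pi_\varphi$ may meet infinitely many tubes, and $\Pi_\varphi\cap\mathcal{P}$ (or $\Pi_\varphi\cap\mathcal{I}$) may contain infinitely many indecomposables, so this iteration is transfinite. You acknowledge the issue and offer two fixes: bounding HN-length by $\dim X$, and Zorn's lemma. The first only shows that \emph{if} a limit stability data exists then its HN-filtrations are finite; it does not construct the limit. The second requires upper bounds for chains in the refinement poset, and ``supplied by the block-wise construction above'' does not explain how an arbitrary chain of stability data acquires a common refinement—this is precisely the nontrivial point, and you have not addressed it.

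The paper sidesteps the whole issue by writing down the refinement in a single step rather than iterating. For each $\varphi$ it sets $I_\varphi=\mathcal{P}_\varphi\cup\mathcal{R}_\varphi\cup\mathcal{I}_\varphi$, where $\mathcal{P}_\varphi$ linearly orders the indecomposables of $\Pi_\varphi\cap\mathcal{P}$ (possible since they are directing), $\mathcal{I}_\varphi$ does the same for $\Pi_\varphi\cap\mathcal{I}$, and $\mathcal{R}_\varphi$ indexes the tubes meeting $\Pi_\varphi$ with an arbitrary linear order; one then declares $\mathcal{I}_\varphi>\mathcal{R}_\varphi>\mathcal{P}_\varphi$. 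The corresponding pieces $P_\psi$ are $\add M_\psi$, $\add N_\psi$, or $\Pi_\varphi\cap\Gamma(\mathbf{T}_{\textbf{\textit{x}}})$. That $(I_\varphi,\{P_\psi\})$ is a local stability data on $\Pi_\varphi$ is immediate from the Hom-orthogonalities you listed and the fact that any object of $\Pi_\varphi$ splits as a finite direct sum of indecomposables, so its HN-filtration in $(I_\varphi,\{P_\psi\})$ is just the split filtration by phases and is automatically finite. One application of Proposition~\ref{local refinement construction} then gives a global refinement with every semistable class either of the form $\add M$ for a single directing $M$, or contained in a single tube; the tube pieces are finished off by Proposition~\ref{finest stab data for cyclic quiver}. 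No iteration, no Zorn.
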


\begin{proof}
Let $(\Phi,\{\Pi_{\varphi}\}_{\varphi\in\Phi})$ be a stability data on $\cal{A}$. Since each indecomposable module $M$ in the postprojective component $\cal{P}$  is directing, the indecomposable modules in the subcategory $\Pi_{\varphi}\cap \cal{P}$ can be  ordered by a linearly ordered set $\cal{P}_{\varphi}$ for any $\varphi\in\Phi$, namely, $\Pi_{\varphi}\cap \cal{P}=\{M_{\psi} \ |\ \psi\in \cal{P}_{\varphi}\}$, such that $\Hom(M_{\psi'}, M_{\psi''})=0$ for any $\psi'>\psi''\in \cal{P}_{\varphi}$.

Similarly, since each indecomposable module $N$ in the preinjective component $\cal{I}$  is directing, the indecomposable modules in the subcategory  $\Pi_{\varphi}\cap \cal{I}$ can be ordered by a linearly ordered set $\cal{I}_{\varphi}$ for any $\varphi\in\Phi$, namely, $\Pi_{\varphi}\cap \cal{I}=\{N_{\psi}\ |\ \psi\in \cal{I}_{\varphi}\}$, such that $\Hom(N_{\psi'}, N_{\psi''})=0$ for any $\psi'>\psi''\in \cal{I_{\varphi}}$.
Meanwhile, the subcategory $\Pi_{\varphi}\cap\cal{R}$ of regular component $\cal{R}$ can be ordered by a linearly ordered set $\cal{R}_{\varphi}$ for any $\varphi\in\Phi$, namely,
$\Pi_{\varphi}\cap\cal{R}=\{\Pi_{\varphi}\cap
\Gamma(\textbf{T}_{{\textbf{\textit{x}}}})\ |\ (\varphi, {\textbf{\textit{x}}}) \in \cal{R}_{\varphi}, \Pi_{\varphi}\cap\Gamma(\textbf{T}_{{\textbf{\textit{x}}}})\neq 0\}$.

Put $I_{\varphi}=\cal{P}_{\varphi}\cup\cal{R}_{\varphi}\cup\cal{I}_{\varphi}$. Then $I_{\varphi}$ is a linearly ordered set with further relations $\psi'>\psi''>\psi'''$ for any $\psi'\in \cal{I}_{\varphi}, \psi''\in \cal{R}_{\varphi}$ and $\psi'''\in \cal{P}_{\varphi}$.
Let $\Psi= \cup_{\varphi \in \Phi } I_{\varphi} $, then $\Psi$ is a linearly ordered set which keeps each $I_{\varphi}$ as ordered subsets, and if
$\psi'\in I_{\varphi'}, \psi''\in I_{\varphi''}$ with $\varphi'>\varphi''$ in $\Phi$, then $\psi'>\psi''$.

 For any $\varphi\in\Phi$ and $\psi\in I_{\varphi}$, we define
 \begin{equation*}
P_{\psi}=
\begin{cases}
\add~M_{\psi}, & \text{if $\psi\in\cal{P}_{\varphi};$}\\
\add~N_{\psi},& \text{if $\psi\in\cal{I}_{\varphi};$}\\
\add~\{\Pi_{\varphi}\cap
 \Gamma(\textbf{T}_{{\textbf{\textit{x}}}})\}, & \text{if $\psi=(\varphi, {\textbf{\textit{x}}}) \in \cal{R}_{\varphi}$.}\\
\end{cases}
\end{equation*}
By construction, $P_{\psi}$ is extension closed, $\Hom(P_{\psi'}, P_{\psi''})=0$ for any $\psi'>\psi'' \in \Psi$,
and $\Pi_{\varphi}=\cup_{\psi \in I_{\varphi}} P_{\psi}$. It is easy to see that $(\Psi, \{P_{\psi}\}_{\psi \in \Psi})$ is a stability data on $\cal{A}$, which is finer than $(\Phi,\{\Pi_{\varphi}\}_{\varphi\in\Phi})$. Note that each subcategory $P_{\psi}$ for any $\psi$ in $\cal{P}_{\varphi}$ or $\cal{I}_{\varphi}$ contains a unique indecomposable direct summand.
Moreover, for any $\psi=(\varphi, {\textbf{\textit{x}}})\in\cal{R}_{\varphi}$, the semistable subcategory $P_{\psi}=\add~\{\Pi_{\varphi}\cap
\Gamma(\textbf{T}_{{\textbf{\textit{x}}}})\}$ is a subcategory of a tube category $\textbf{T}_{{\textbf{\textit{x}}}}$. Then by similar arguments as in the proof of Proposition  \ref{finest stab data for cyclic quiver},
we can make local refinement to obtain a finest stability data. This finishes the proof.
\end{proof}

Consequently, we can classify torsion pairs for the category of finitely generated modules over any tame heredirary algebra via the stability data approach. In the following we provide a concrete example for tame heredirary algebra $\widetilde{A}_1$.

 \begin{exam}
Let $Q:1 \rightrightarrows 2$. Then $\widetilde{A}_1$:=$\mathbf{k}Q$ is the minimal tame hereditary algebra. Denote by $P_i, I_i, S_i\; (1\leq i\leq 2)$ the indecomposable projective, injective and simple $\widetilde{A}_1$-modules respectively.
The postprojective component $\cal{P}$ consists of modules $\tau^{-n}P_{i}$ and the preinjective component $\cal{I}$ consists of modules $\tau^{n}I_{i}$, where $i=1,2$ and $n\in \mathbb{Z}_{\geq 0}$. For convenience, we denote by $\cal{P}_{2k+1}{:=} \tau^{-k}P_{2}$, $\cal{P}_{2k+2}:= \tau^{-k}P_{1}$, and $\cal{I}_{2k+1}{:=} \tau^{k}I_{1}$, $\cal{I}_{2k+2}:= \tau^{k}I_{2}$, for any $k\in\mathbb{Z}_{\geq 0}$. The regular component $\cal{R}$ consists of modules $S^{(d)}_{\textbf{\textit{x}}}, {\textbf{\textit{x}}} \in \mathbb{P}^{1}, d\in \mathbb{Z}_{\geq 1}$ where each  $S_{{\textbf{\textit{x}}}}$ is a simple regular module.

For any finest stability data $(\Phi,\{\Pi_{\varphi}\}_{\varphi\in\Phi})$ on $\mod$-$\widetilde{A}_1$, the simple modules $S_1$ and $S_2$ are stable with $\bm \phi(S_{1})\neq \bm \phi(S_{2})$ by Theorem \ref{iff cond for finest}.
In fact, there are only two equivalent classes of finest stability data on $\mod$-$\widetilde{A}_1$ as below, according to $\bm \phi(S_{1})>\bm \phi(S_{2})$ or $\bm \phi(S_{1})< \bm \phi(S_{2})$ respectively.

\begin{itemize}
 \item [(1)]  $\Phi =(0, \mathbb{Z}_{\geq 1})\cup\mathbb{P}^{1}\cup(1, \mathbb{Z}_{\geq 1})$ with $(0, n)<(0, n+1)<{\textbf{\textit{x}}}<(1, n+1)<(1, n)$ for any $n\in\mathbb{Z}_{\geq1}, {\textbf{\textit{x}}} \in \mathbb{P}^{1}$, and there are no restrictions on the choice of ordering on the set $\mathbb{P}^{1}$. For any $k\in\mathbb{Z}_{\geq1}$, $\Pi_{(0, k)}=\langle \cal{P}_{k}\rangle$ and $\Pi_{(1, k)}=\langle \cal{I}_{k}\rangle$; for any ${\textbf{\textit{x}}}\in \mathbb{P}^{1}$,  $\Pi_{{\textbf{\textit{x}}}}=\langle S_{\textbf{\textit{x}}}\rangle$. In this case, each indecomposable module is semistable;

\item [(2)] $\Phi =\{ 1, 2  \}$ with $1<2$, and $\Pi_{ 1}=\langle S_1\rangle,  \Pi_{ 2}=\langle S_2\rangle$.
In this case, only $S_1, S_2$ are semistable indecomposable modules. For any other indecomposable module $X$, assume $X$ has dimension vector $\underline{\rm{\textbf{dim}}}\; X =(m,  n)$, its HN-filtration is given by
$$0\rightarrow S^{\oplus n}_2 \rightarrow X \rightarrow S^{\oplus m}_1 \rightarrow 0.$$
\end{itemize}

As a consequence of Propositions \ref{finest for tame hereditary alg} and \ref{torsion pair construction}, we can obtain a classification of torsion pairs in $\mod$-$\widetilde{A}_1$ as below, 
where $P\subseteq \mathbb{P}^{1}$ is a (possibly empty) subset of $\mathbb{P}^{1}$ and $n\in\mathbb{Z}_{\geq1} $.

\begin{table}[h]
\caption{ Non-trivial torsion pairs $(\cal{T},\cal{F})$ in $\mod$-$\widetilde{A}_1$}
\begin{tabular}{|c|c|c|c|c|}
  \hline
 \makecell*[c]{$\cal{T}$}&$\cal{F}$\\
\hline
 \makecell*[c]{$\langle  S_{{\textbf{\textit{x}}}}, \cal{I}  \mid {\textbf{\textit{x}}} \in P \rangle  $}&$ \langle\cal{P},   S_{{\textbf{\textit{x}}}} \mid    {\textbf{\textit{x}}}\in\mathbb{P}^{1}\backslash P\rangle$ \\
  \hline
 \makecell*[c]{$\langle \cal{I}_{m} \ |\ m\leq n \rangle$}&$ \langle\cal{P}, \cal{R}, \cal{I}_{m} \ |\ m> n\rangle$\\
  \hline
   \makecell*[c]{$\langle \cal{P}_{m},  \cal{R}, \cal{I} \ |\ m> n \rangle$}&$  \langle  \cal{P}_{m} \ |\ m\leq n \rangle$\\
  \hline
   \makecell*[c]{$\langle  S_{2} \rangle$}&$ \langle S_{1} \rangle $\\
  \hline
 \end{tabular}
 \label{table 2}
\end{table}

Recall that a subcategory $\cal{T}$  is called \emph{contravariantly finite} in an abelian category $\cal{A}$ if for each  $X \in \cal{A}$, there is a
map $f: T \to X$ with  $T \in \cal{T}$ such that $\Hom(T', T ) \stackrel{\cdot f}\longrightarrow  \Hom(T',  X )$ is surjective for all $T' \in \cal{T}$. Dually, a \emph{covariantly finite} subcategory is defined. A subcategory $\cal{T}$ is called  \emph{functorially
finite}  if it is both contravariantly finite and covariantly finite. We call a torsion pair $(\cal{T},\cal{F})$  functorially
finite if the torsion class $\cal{T}$ is functorially
finite.

We remark that all the torsion pairs in the above table are functorially finite except for the first case. Moreover, for any functorially finite torsion pair, its torsion class $\mathcal{T}$ is determined by a support $\tau$-tilting module, c.f. \cite[Thm. 2.7]{AIR}. More precisely, the support $\tau$-tilting module $T=S_{i}$ if $\cal{T}=\langle S_{i} \rangle$ for $i=1,2$, and $T=\cal{I}_{n}\oplus \cal{I}_{n-1}$ if $\cal{T}=\langle \cal{I}_{m} \ |\ m\leq n \rangle$ with $ n\geq2$; while
 $T=\cal{P}_{n+1}\oplus \cal{P}_{n+2}$ if $\cal{T}=\langle \cal{P}_{m},  \cal{R}, \cal{I} \ |\ m> n \rangle$.
\end{exam}

\section{Projective line}

In this section we investigate the stability data for the category $\coh \mathbb{P}^{1}$ of coherent sheaves over the projective line $\mathbb{P}^{1}:= \mathbb{P}_{\textbf{k}}^{1}$. We obtain classifications of finest stability data and torsion pairs on $\coh \mathbb{P}^{1}$.

Recall from \cite{Gro57} and \cite{Lenzing} that $\coh \mathbb{P}^{1}$ is a (skeletally) small and Hom-finite hereditary abelian $\textbf{k}$-linear category
with Serre duality of the form
$$\Ext^{1}(X,Y) \cong \DHom(Y, \tau X),$$
where $\D=\Hom_\textbf{k}(-, \textbf{k}) $ and $\tau$ is given by the grading shift with $(-2)$.
The subcategories ${\rm{coh}}_0 \mathbb{P}^{1}$ of torsion sheaves and $\vect \mathbb{P}^{1}$ of vector bundles form a split torsion pair $({\rm{coh}}_0 \mathbb{P}^{1}, \vect \mathbb{P}^{1})$ in $\coh \mathbb{P}^{1}$, namely, any coherent sheaf can be decomposed as a direct sum of a torsion sheaf and a vector bundle, and there are no nonzero homomorphisms from ${\rm{coh}}_0 \mathbb{P}^{1}$ to  $\vect \mathbb{P}^{1}$.
The  subcategory ${\rm{coh}}_0 \mathbb{P}^{1}$  splits into a coproduct of connected subcategories $\coprod _{x\in\mathbb{P}^{1}} \textbf{T}_{{\textbf{\textit{x}}}}$, where each $\textbf{T}_{{\textbf{\textit{x}}}}$ is generated by a simple sheaf $S_{{\textbf{\textit{x}}}}$, and the associated AR-quivers $\Gamma(\textbf{T}_{{\textbf{\textit{x}}}})$ is a stable tube of rank one. Any indecomposable vector bundle in $\vect \mathbb{P}^{1}$ is a line bundle, more precisely, it is of the form $\mathcal{O}(n)$ for $n\in \mathbb{Z}$. If $n=0$, then $\mathcal{O}:=\mathcal{O}(0)$ is the structure sheave of $\mathbb{P}^{1}$. The homomorphism space $\Hom(\mathcal{O}(m), \mathcal{O}(n))$ between two line bundles has dimension $n-m+1$ if $n \geq m$ and $0$ otherwise. Moreover, if $\mathcal{O}(n)$ is a line bundle and $S_{{\textbf{\textit{x}}}}$ is a simple sheaf, then $\Hom(\mathcal{O}(n), S_{{\textbf{\textit{x}}}}) \cong\textbf{k}$.

The following lemma shows that any indecomposable sheaf is semistable for $\coh {\mathbb{P}^1}$.

\begin{lem} \label{semistable objects for P1}
Let $(\Phi, \{\Pi_{\varphi}\}_{\varphi\in\Phi})$ be a stability data on $\coh \mathbb{P}^{1}$, then any indecomposable sheaf is semistable.
\end{lem}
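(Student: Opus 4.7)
The indecomposable objects of $\coh\mathbb{P}^1$ fall into two classes: line bundles $\mathcal{O}(n)$ with $n\in\mathbb{Z}$, and torsion sheaves $S_{\textbf{\textit{x}}}^{(t)}$ lying in the homogeneous tubes $\textbf{T}_{\textbf{\textit{x}}}$. In each case I will suppose the HN-filtration \eqref{HN-filt2.1} of the object has length $k\geq 2$ and derive a contradiction to axiom (1) of Definition \ref{defn of stab data} by producing a nonzero morphism from the maximal semistable subobject $A_1$ to some strictly lower-phased HN-factor.

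Consider first $X=\mathcal{O}(n)$ with a nontrivial HN-filtration. By Lemma \ref{maxsubss}, $A_1=X_1$ is a nonzero subsheaf of the rank-one torsion-free sheaf $\mathcal{O}(n)$, so $A_1\cong\mathcal{O}(m)$ for some $m\leq n$. On the other hand $X_{k-1}$ is a proper subsheaf of $\mathcal{O}(n)$ (since $A_k\neq 0$), hence $X_{k-1}\cong\mathcal{O}(m')$ with $m'<n$, and $A_k=\mathcal{O}(n)/\mathcal{O}(m')$ is a \emph{nonzero torsion sheaf}. Serre duality gives
\[
\Ext^1\bigl(\mathcal{O}(m),T\bigr) \;\cong\; \D\Hom\bigl(T,\mathcal{O}(m-2)\bigr) \;=\; 0
\]
for any torsion sheaf $T$, since no nonzero map goes from a torsion sheaf to a torsion-free one. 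Choosing a simple quotient $A_k\twoheadrightarrow S_{\textbf{\textit{y}}}$ and applying $\Hom(\mathcal{O}(m),-)$ to the corresponding short exact sequence then yields a surjection $\Hom(\mathcal{O}(m),A_k)\twoheadrightarrow\Hom(\mathcal{O}(m),S_{\textbf{\textit{y}}})\cong\textbf{k}$. This nonzero morphism $A_1\to A_k$ contradicts the phase ordering $\varphi_1>\varphi_k$.

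For a torsion sheaf $X=S_{\textbf{\textit{x}}}^{(t)}$, every subobject of $X$ lies in the uniserial tube $\textbf{T}_{\textbf{\textit{x}}}$, and since $\textbf{T}_{\textbf{\textit{x}}}$ is a Serre subcategory of $\coh\mathbb{P}^1$, every HN-factor $A_i$ is a nonzero object of $\textbf{T}_{\textbf{\textit{x}}}$. The tube $\textbf{T}_{\textbf{\textit{x}}}$ has rank one, so by Corollary \ref{non-zero morphism between heigh objects} (the hypothesis $t_i\geq n$ is automatic when $n=1$), any two of its nonzero objects admit nonzero morphisms in both directions. In particular $\Hom(A_1,A_2)\neq 0$, again contradicting $\varphi_1>\varphi_2$.

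The crux is the line-bundle case, where the HN-filtration is forced to straddle the torsion/torsion-free divide: the essential ingredient is the Serre duality vanishing above, which ultimately exploits that $\tau$ preserves line bundles in $\coh\mathbb{P}^1$. The torsion case is internal to a single tube and is a direct application of the machinery developed in Section 4.
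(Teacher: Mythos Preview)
Your proof is correct. For the line-bundle case you follow essentially the paper's route: the maximal semistable subobject of $\mathcal{O}(n)$ is a line bundle $\mathcal{O}(m)$, the minimal semistable quotient is a nonzero torsion sheaf, and a nonzero morphism between them contradicts the phase ordering. The paper simply asserts $\Hom(L',X)\neq 0$ (relying on the facts about $\coh\mathbb{P}^1$ recalled just before the lemma), whereas you spell this out via Serre duality; this is the same argument with more detail filled in.

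For the torsion case your argument differs from the paper's. The paper observes directly that since the simple $S_{\textbf{\textit{x}}}$ has no nontrivial subobjects it is stable, hence lies in some $\Pi_\varphi$; since $\Pi_\varphi$ is extension-closed, the entire homogeneous tube $\langle S_{\textbf{\textit{x}}}\rangle$ is contained in $\Pi_\varphi$, so every $S_{\textbf{\textit{x}}}^{(t)}$ is semistable. Your contradiction argument via Corollary~\ref{non-zero morphism between heigh objects} is valid (note that $A_1,A_2$ need not be indecomposable, but passing to indecomposable summands is harmless), yet it is more indirect. The paper's version is slicker: it establishes semistability positively rather than by contradiction, avoids invoking the Section~4 machinery, and in fact shows the stronger statement that all objects of a given homogeneous tube share the \emph{same} phase.
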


\begin{proof}
Obviously, any simple sheaf $S_{{\textbf{\textit{x}}}}$ in $\coh \mathbb{P}^{1}$ is stable since it has no non-trivial subsheaves. Then $S_{\textbf{\textit{x}}}\in\Pi_{\varphi}$ for some $\varphi$. It follows that the subcategory $\langle S_{{\textbf{\textit{x}}}}\rangle\subseteq \Pi_{\varphi}$, hence any torsion sheaf $S_{{\textbf{\textit{x}}}}^{(n)}\in\langle S_{{\textbf{\textit{x}}}}\rangle$ is semistable.

For any line bundle $L\in\coh \mathbb{P}^{1}$, if it is not semistable, then its minimal semistable quotient sheaf is given by a torsion sheaf $X$, and its maximal semistable subsheaf is given by another line bundle $L'$. Then $\Hom(L', X)\neq 0$ yields a contradiction.
We are done.
\end{proof}

There is a classical slope function $\mu: \ind(\coh  {\mathbb{P}^1})\to \mathbb{Z}\cup \{\infty\}$ on the category of coherent sheaves over $\mathbb{P}^1$, satisfying $\mu(\co(n))=n$ for any line bundle $\co(n)$ and $\mu(X)=\infty$ for any torsion sheaf $X$. For any indecomposable sheaves $X,Y\in\coh  {\mathbb{P}^1}$ with $\mu(X)\neq\mu(Y)$, it is well-known that $$\Hom(X, Y)\neq 0 \quad\text{if \;and \;only \;if}\quad \mu(X)<\mu(Y).$$
Denote by $\overline{\mathbb{Z}}=\mathbb{Z}\cup \{\infty\}$, which is a linearly ordered set in the natrual way. Then the slope function gives a stability data $(\overline{\mathbb{Z}}, \{P_{\psi}\}_{\psi\in\overline{\mathbb{Z}}})$, called \emph{slope stability data}, where $$P_{\psi}=\langle X\in \ind (\coh \mathbb{P}^1)\;|\; \mu(X)=\psi\rangle.$$

The following result shows that each finest stability data on $\coh {\mathbb{P}^1}$ can be obtained from the slope stability data by local refinements.

\begin{prop} \label{refine from slope for P1}
Each finest stability data on $\coh {\mathbb{P}^1}$ is a refinement of the slope stability data $(\overline{\mathbb{Z}}, \{P_{\psi}\}_{\psi\in\overline{\mathbb{Z}}})$.
\end{prop}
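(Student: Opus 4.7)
The plan is to construct an explicit order-preserving surjection $r: \Phi \to \overline{\mathbb{Z}}$, where $(\Phi, \{\Pi_{\varphi}\}_{\varphi \in \Phi})$ is the given finest stability data on $\coh \mathbb{P}^{1}$, and to verify that it witnesses the refinement relation of Definition \ref{finer or coarser} between $(\Phi, \{\Pi_{\varphi}\}_{\varphi \in \Phi})$ and the slope stability data $(\overline{\mathbb{Z}}, \{P_{\psi}\}_{\psi \in \overline{\mathbb{Z}}})$.

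The key step is to show that for each $\varphi \in \Phi$, all indecomposable sheaves in $\Pi_{\varphi}$ have the same slope. By Lemma \ref{semistable objects for P1} every indecomposable sheaf is semistable, and since semistable subcategories are closed under direct summands, $\Pi_{\varphi}$ contains at least one indecomposable. If $X, Y$ are indecomposables in $\Pi_{\varphi}$, Theorem \ref{iff cond for finest} forces $\Hom(X, Y) \neq 0 \neq \Hom(Y, X)$; combined with the classical Hom-vanishing on $\coh \mathbb{P}^{1}$ (namely $\Hom(\co(m), \co(n)) = 0$ for $m > n$, no maps from torsion sheaves to vector bundles, and simple sheaves at distinct points are mutually Hom-orthogonal), this forces $\mu(X) = \mu(Y)$. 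Define $r(\varphi)$ to be this common slope. Surjectivity of $r$ is clear because $\co(n)$ for each $n \in \mathbb{Z}$ and $S_{\textbf{\textit{x}}}$ for each ${\textbf{\textit{x}}} \in \mathbb{P}^{1}$ lie in some $\Pi_{\varphi}$. Order preservation follows by a parallel argument: if $\varphi' > \varphi''$ and $X \in \Pi_{\varphi'}$, $Y \in \Pi_{\varphi''}$ are indecomposable, then $\Hom(X, Y) = 0$ by Definition \ref{defn of stab data}(1), and the classical Hom description rules out $\mu(X) < \mu(Y)$ as well as $\mu(X) = \mu(Y) < \infty$ (since $\co(n)$ is the unique indecomposable of slope $n$ and the two sheaves lie in distinct $\Pi_{\varphi}$), leaving $\mu(X) \geq \mu(Y)$. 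Finally, $P_{\psi} = \langle \Pi_{\varphi} \mid \varphi \in r^{-1}(\psi)\rangle$ because both subcategories are generated (under extensions and summands) by exactly the indecomposables of slope $\psi$.

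The hard part will be the well-definedness step, and within it the mixed case where one of $X, Y$ is a line bundle and the other is torsion. Here the split torsion pair $({\rm coh}_{0}\, \mathbb{P}^{1}, \vect \mathbb{P}^{1})$ is what excludes any non-zero morphism from a torsion sheaf to a line bundle, which is precisely what is needed to force these two types of sheaves into different semistable subcategories. The companion case of two torsion sheaves supported at distinct closed points is handled by the orthogonality of the tubes $\textbf{T}_{\textbf{\textit{x}}}$; this is not strictly required for the statement but is useful bookkeeping, since it shows that each semistable subcategory of torsion type is in fact confined to a single tube, which will be convenient for the subsequent classification.
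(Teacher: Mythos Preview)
Your proposal is correct and follows essentially the same route as the paper's proof: define $r(\varphi)$ to be the common slope of indecomposables in $\Pi_{\varphi}$ (well-defined via Theorem~\ref{iff cond for finest} and the Hom structure of $\coh\mathbb{P}^1$), obtain surjectivity from Lemma~\ref{semistable objects for P1}, order preservation from $\Hom(\Pi_{\varphi'},\Pi_{\varphi''})=0$ together with the slope characterisation of nonvanishing Homs, and check the fibre condition directly. Your extra remarks on the mixed line-bundle/torsion case and on tubes at distinct points are accurate but not strictly needed, since $\Hom(X,Y)=0$ already forces $\mu(X)\geq\mu(Y)$ for any pair of indecomposables in $\coh\mathbb{P}^1$.
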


\begin{proof}
Let $(\Phi, \{\Pi_{\varphi}\}_{\varphi\in\Phi})$ be a finest stability data on $\coh {\mathbb{P}^1}$.
We only need to show that $(\Phi, \{\Pi_{\varphi}\}_{\varphi\in\Phi})$ is finer than $(\overline{\mathbb{Z}}, \{P_{\psi}\}_{\psi\in\overline{\mathbb{Z}}})$.

For any non-zero indecomposable sheaves $X,Y\in\Pi_{\varphi}$, we have $\Hom(X,Y)\neq 0\neq \Hom(Y, X)$. It follows that $\mu(X)=\mu(Y)$. This induces a well-defined map $r: \Phi\to \overline{\mathbb{Z}}$, satisfying $r(\varphi)=\mu(X)$ for any indecomposable $X\in\Pi_{\varphi}$. Moreover, by Lemma
\ref{semistable objects for P1}, we know that any indecomposable sheaf is semistable under the stability data $(\Phi, \{\Pi_{\varphi}\}_{\varphi\in\Phi})$. It follows that $r$ is surjective.

By definition, it remains to show the statements (1) and (2) in
Definition \ref{finer or coarser} hold.  In fact,
for any  $\varphi_1>\varphi_2\in\Phi$ and any indecomposable $X_i\in\Pi_{\varphi_i}\,(i=1,2)$, we have $\Hom(X_1, X_2)=0$. Hence $\mu(X_1)\geq \mu(X_2)$. That is, $r(\varphi_1)\geq r(\varphi_2)$.
On the other hand, for any $\psi\in\overline{\mathbb{Z}}$, $$P_{\psi}=\langle  X\in\ind~(\coh\mathbb{P}^1)\ |\ \mu(X)=\psi\rangle=\langle  X\ |\ X\in\Pi_{\varphi}, r(\varphi)=\psi\rangle
=\langle  \Pi_{\varphi}\ |\ \varphi\in r^{-1}(\psi)\rangle.$$

Therefore, $(\Phi, \{\Pi_{\varphi}\}_{\varphi\in\Phi})$ is finer than $(\overline{\mathbb{Z}}, \{P_{\psi}\}_{\psi\in\overline{\mathbb{Z}}})$. We are done.
\end{proof}

The following result describes semistable subcategories for finest stability data on $\coh \mathbb{P}^{1}$.

\begin{prop} \label{semistable obj in proj line}
Let $(\Phi,\{\Pi_{\varphi}\}_{\varphi\in\Phi})$ be a finest stability data on $\coh \mathbb{P}^{1}$. Then each semistable subcategory has the form $\langle \mathcal{O}(n) \rangle$ for $ n \in \mathbb{Z}$, or $\langle S_{{\textbf{\textit{x}}}} \rangle$ for some $ {\textbf{\textit{x}}} \in \mathbb{P}^{1}$.
Further, the phases for the stable sheaves satisfy:
\begin{itemize}
  \item[(1)] $\bm \phi(\mathcal{O}(n)) <\bm \phi(\mathcal{O}(n+1)) < \bm \phi(S_{{\textbf{\textit{x}}}}), \forall \ n \in \mathbb{Z}, {\textbf{\textit{x}}} \in \mathbb{P}^{1}$;
  \item[(2)] $\bm \phi(S_{{\textbf{\textit{x}}}})\not=\bm \phi(S_{{\textbf{\textit{y}}}}),  \forall \ {\textbf{\textit{x}}}\neq {\textbf{\textit{y}}}\in \mathbb{P}^{1}$, and there are no restrictions on the choice of ordering on the set $\{\bm \phi(S_{{\textbf{\textit{x}}}}) \ | \ {\textbf{\textit{x}}} \in \mathbb{P}^{1}\}$.
\end{itemize}
Consequently, any stability data on $\coh \mathbb{P}^{1}$ can be refined to a finest one.
\end{prop}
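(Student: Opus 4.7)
The plan is to combine two earlier results—Lemma \ref{semistable objects for P1} (every indecomposable sheaf is semistable under any stability data on $\coh\mathbb{P}^1$) and Theorem \ref{iff cond for finest} (within a finest stability data, $\Hom(X,Y)\neq 0\neq \Hom(Y,X)$ for all nonzero $X,Y\in\Pi_\varphi$)—with the classical $\Hom$-computations on $\coh\mathbb{P}^1$ recalled just before Lemma \ref{semistable objects for P1}. These should force each $\Pi_\varphi$ to equal either $\langle\mathcal{O}(n)\rangle$ or $\langle S_{\textbf{\textit{x}}}\rangle$, after which the phase relations are read off from $\Hom$-directions.

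First I would fix $\varphi\in\Phi$ and determine $\Pi_\varphi$ through its indecomposable members. By Lemma \ref{semistable objects for P1} every indecomposable sheaf lies in some $\Pi_\varphi$, and under the finest hypothesis Theorem \ref{iff cond for finest} rules out all incompatible coexistences: two distinct line bundles $\mathcal{O}(m)$ and $\mathcal{O}(n)$ cannot share a $\Pi_\varphi$ since $\Hom(\mathcal{O}(m),\mathcal{O}(n))=0$ whenever $m>n$; a line bundle $\mathcal{O}(n)$ and a torsion sheaf $S_{\textbf{\textit{x}}}^{(k)}$ cannot share a $\Pi_\varphi$ since $\Hom(S_{\textbf{\textit{x}}}^{(k)},\mathcal{O}(n))=0$; and indecomposables supported at distinct tube points ${\textbf{\textit{x}}}\neq {\textbf{\textit{y}}}$ are orthogonal. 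Any two objects inside the same tube $\textbf{T}_{\textbf{\textit{x}}}$, however, have nonzero $\Hom$ in both directions and therefore must sit in the same semistable subcategory; applied to $S_{\textbf{\textit{x}}}^{(k)}$ and $S_{\textbf{\textit{x}}}$ (each of which Lemma \ref{semistable objects for P1} places in some $\Pi_{\varphi'}$), the Hom-vanishing axiom across phases forces $\varphi'=\varphi$. Combined with closure under direct summands and Krull--Schmidt, this yields $\Pi_\varphi=\langle\mathcal{O}(n)\rangle$ for some $n\in\mathbb{Z}$ (using $\Ext^1(\mathcal{O}(n),\mathcal{O}(n))=0$ from Serre duality) or $\Pi_\varphi=\langle S_{\textbf{\textit{x}}}\rangle$ for some ${\textbf{\textit{x}}}\in\mathbb{P}^1$.

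For the phase relations, distinct semistable subcategories always carry distinct phases, so $\Hom(\mathcal{O}(n),\mathcal{O}(n+1))\neq 0$ immediately forces $\bm\phi(\mathcal{O}(n))<\bm\phi(\mathcal{O}(n+1))$, and $\Hom(\mathcal{O}(n),S_{\textbf{\textit{x}}})\cong\textbf{k}\neq 0$ forces $\bm\phi(\mathcal{O}(n))<\bm\phi(S_{\textbf{\textit{x}}})$. For ${\textbf{\textit{x}}}\neq {\textbf{\textit{y}}}$ the sheaves $S_{\textbf{\textit{x}}}$ and $S_{\textbf{\textit{y}}}$ lie in different $\Pi_\varphi$'s, so their phases differ; the simultaneous vanishing $\Hom(S_{\textbf{\textit{x}}},S_{\textbf{\textit{y}}})=\Hom(S_{\textbf{\textit{y}}},S_{\textbf{\textit{x}}})=0$ then imposes no further restriction on the ordering among tube points.

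For the closing ``consequently'' claim, given an arbitrary stability data $(\Phi,\{\Pi_\varphi\})$, I would build a finest refinement in one step via Proposition \ref{local refinement construction}. For each $\varphi$ set $N(\varphi)=\{n\in\mathbb{Z}\mid \mathcal{O}(n)\in\Pi_\varphi\}$ and $X(\varphi)=\{{\textbf{\textit{x}}}\in\mathbb{P}^1\mid S_{\textbf{\textit{x}}}^{(k)}\in\Pi_\varphi \text{ for some } k\}$. The same $\Hom$-across-phase argument used above (now only invoking extension-closure of $\Pi_\varphi$ and $\Hom(\Pi_{\varphi'},\Pi_{\varphi''})=0$ for $\varphi'>\varphi''$) shows that any $S_{\textbf{\textit{x}}}^{(k)}\in\Pi_\varphi$ forces $S_{\textbf{\textit{x}}}\in\Pi_\varphi$, and then the full tube $\langle S_{\textbf{\textit{x}}}\rangle\subseteq \Pi_\varphi$. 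Refine $\Pi_\varphi$ by the singleton blocks $\langle\mathcal{O}(n)\rangle$ for $n\in N(\varphi)$ and $\langle S_{\textbf{\textit{x}}}\rangle$ for ${\textbf{\textit{x}}}\in X(\varphi)$, ordered so that line-bundle blocks sit below all torsion blocks, line bundles follow the order of $n$, and the torsion blocks take any chosen total order on $X(\varphi)$; Krull--Schmidt supplies the required HN-filtration via block decomposition, so this is a local stability data on $\Pi_\varphi$. Globalising by Proposition \ref{local refinement construction} produces a stability data on $\coh\mathbb{P}^1$ that is finest by Theorem \ref{iff cond for finest}. The main obstacle to watch is the torsion case in the identification step—carefully verifying that each $S_{\textbf{\textit{x}}}^{(k)}$ really does drag $S_{\textbf{\textit{x}}}$ into the same $\Pi_\varphi$ via the bidirectional $\Hom$ inside $\textbf{T}_{\textbf{\textit{x}}}$, so that the refinement partitions each tube into a single block rather than leaving ``orphan'' higher-length sheaves behind.
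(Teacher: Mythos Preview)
Your proposal is correct and follows essentially the same route as the paper: combine Lemma \ref{semistable objects for P1} with the $\Hom$-computations on $\coh\mathbb{P}^1$ and Theorem \ref{iff cond for finest} to pin down each $\Pi_\varphi$. The paper's proof is terser and does not spell out the ``consequently'' clause separately, but your explicit local-refinement construction via Proposition \ref{local refinement construction} is a faithful unpacking of what the paper leaves implicit.
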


\begin{proof}
By Lemma \ref{semistable objects for P1}, we know that any indecomposable sheaf is semistable in $\coh \mathbb{P}^{1}$. Notice that $\Hom(\mathcal{O}(n),\mathcal{O}(m)) \neq 0$ if and only if $n\leq m$, and $\Hom(\mathcal{O}(n), S_{{\textbf{\textit{x}}}})\neq 0,\ \forall \ n \in \mathbb{Z}, {\textbf{\textit{x}}} \in \mathbb{P}^{1}$. Moreover, there are no non-zero homomorphisms between torsion sheaves concentrated in distinct points. Then the result follows from Theorem \ref{iff cond for finest} and Lemma \ref{semistable objects for P1} immediately.
\end{proof}

Note that $\mathbb{Z}$ is a linearly ordered set in the natural way, i.e., $n<n+1$ for any $n\in \mathbb{Z}$. Meanwhile, $\mathbb{P}^{1}$ can be viewed as a linearly ordered set by fixing any choice of ordering on the points of the projective line. Then the union $\mathbb{Z}\cup \mathbb{P}^{1}$ also forms a linearly ordered set by composing the relations $n<{\textbf{\textit{x}}}$ for any $n\in\mathbb{Z}$ and ${\textbf{\textit{x}}}\in\mathbb{P}^{1}$. Note that there are infinitely many ordered relations on the set $\mathbb{Z}\cup \mathbb{P}^{1}$, depending on the choices of ordering on the set $\mathbb{P}^{1}$.

As a consequence of Proposition \ref{semistable obj in proj line}, we obtain the following classification result for finest stability data on $\coh \mathbb{P}^{1}$, parameterized by the linearly ordered set $\mathbb{Z}\cup \mathbb{P}^{1}$.

\begin{prop} \label{Finest stab data in proj line} Any finest stability data on $\coh \mathbb{P}^{1}$ has the form $$(\mathbb{Z}\cup \mathbb{P}^{1}, \{\Pi_{\varphi}\}_{\varphi \in\mathbb{Z}\cup \mathbb{P}^{1}}),$$ where  $\mathbb{Z}\cup \mathbb{P}^{1}$ is an arbitrary linearly ordered set defined as above,
$\Pi_{n}=\langle \mathcal{O}(n) \rangle$ for any $ n \in \mathbb{Z}$,
and $\Pi_{{\textbf{\textit{x}}}}=\langle S_{{\textbf{\textit{x}}}} \rangle$ for any ${\textbf{\textit{x}}} \in \mathbb{P}^{1}$.
\end{prop}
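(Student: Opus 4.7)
The plan is to establish Proposition 6.4 as a two-direction statement: every finest stability data arises in this parameterized form (necessity), and every such configuration actually gives a finest stability data (sufficiency).

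For \textbf{necessity}, the starting point is Proposition 6.3, which already tells us that every semistable subcategory in a finest stability data must equal $\langle \co(n)\rangle$ for some $n\in\mathbb{Z}$ or $\langle S_{\textbf{\textit{x}}}\rangle$ for some $\textbf{\textit{x}}\in\mathbb{P}^1$. Combining this with Lemma 6.1 (every indecomposable in $\coh\mathbb{P}^1$ is semistable), each $\co(n)$ and each $S_{\textbf{\textit{x}}}$ is forced to appear as the unique stable generator of exactly one semistable subcategory, giving a bijection between $\Phi$ and $\mathbb{Z}\cup\mathbb{P}^1$. The ordering on $\Phi$ then transfers: Proposition 6.3 (1) pins down $\bm\phi(\co(n))<\bm\phi(\co(n+1))<\bm\phi(S_{\textbf{\textit{x}}})$, and Proposition 6.3 (2) says the relative order among the $\bm\phi(S_{\textbf{\textit{x}}})$ is unconstrained. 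This matches the linearly ordered set described in the statement.

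For \textbf{sufficiency}, I would start with an arbitrary linear order on $\mathbb{Z}\cup\mathbb{P}^1$ satisfying $n<\textbf{\textit{x}}$ for all $n,\textbf{\textit{x}}$, and verify that $(\mathbb{Z}\cup\mathbb{P}^1,\{\Pi_\varphi\})$ with $\Pi_n=\langle\co(n)\rangle$ and $\Pi_{\textbf{\textit{x}}}=\langle S_{\textbf{\textit{x}}}\rangle$ satisfies the two axioms of Definition 2.1. The Hom-vanishing axiom splits into four cases: between two line-bundle subcategories ($\Hom(\co(n'),\co(n''))=0$ when $n'>n''$), from torsion to vector bundles (which vanishes because $(\coh_0\mathbb{P}^1,\vect\mathbb{P}^1)$ is a split torsion pair), and between torsion sheaves at distinct points (which vanishes since the tubes $\textbf{T}_{\textbf{\textit{x}}}$ are pairwise orthogonal); the mixed direction $\varphi'=n,\varphi''=\textbf{\textit{x}}$ never occurs by the assumed ordering. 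For the HN-filtration axiom, I would use the fact that every coherent sheaf on $\mathbb{P}^1$ decomposes uniquely as a direct sum of line bundles and torsion summands $T_{\textbf{\textit{x}}_i}\in\langle S_{\textbf{\textit{x}}_i}\rangle=\Pi_{\textbf{\textit{x}}_i}$; arranging these summands by decreasing phase (torsion parts first, ordered by the chosen order on $\mathbb{P}^1$, then line bundles in decreasing order of their twist) and taking the successive direct summand inclusions produces the HN-filtration explicitly.

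Finally, to confirm the resulting stability data is \emph{finest} rather than merely a stability data, I would invoke Theorem 3.4: for any $\varphi\in\mathbb{Z}\cup\mathbb{P}^1$, two non-zero objects of $\Pi_\varphi$ admit non-zero Homs in both directions. For $\varphi=n$, the category $\langle\co(n)\rangle$ coincides with $\add\,\co(n)$ since $\Ext^1(\co(n),\co(n))=\D\Hom(\co(n),\co(n-2))=0$, so this is immediate from $\End\co(n)=\textbf{k}$. For $\varphi=\textbf{\textit{x}}$, the subcategory $\langle S_{\textbf{\textit{x}}}\rangle$ is the rank-one tube $\textbf{T}_{\textbf{\textit{x}}}$, and Corollary 4.4 (applied with $n=1$, where the composition factor set is automatically all simples) gives the required mutual non-vanishing. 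I expect the main obstacle, though fairly routine, to be the bookkeeping in the HN-filtration argument: producing an actual strict filtration (rather than just a decomposition) with the correct strictly decreasing sequence of phases whenever the ambient decomposition involves both vector bundles and torsion summands at multiple points.
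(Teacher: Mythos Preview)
Your proposal is correct and follows essentially the same approach as the paper, which simply records Proposition~\ref{Finest stab data in proj line} as a direct consequence of Proposition~\ref{semistable obj in proj line}. Your write-up is more explicit than the paper's one-line derivation in that you separately verify the sufficiency direction (that each such ordered set really yields a finest stability data via Definition~\ref{defn of stab data} and Theorem~\ref{iff cond for finest}), which the paper leaves implicit.
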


Note that each stability data on $\coh \mathbb{P}^{1}$ is coarser than a finest one. Then by Proposition \ref{torsion pair construction},  any torsion pair $(\cal{T}, \cal{F})$ can be obtained from the finest stability data.
As a consequence of Propositions \ref{Finest stab data in proj line} and \ref{torsion pair construction}, we can obtain a classification of torsion pairs in $\coh \mathbb{P}^{1}$.
\begin{prop}
 The non-trivial torsion pairs $(\cal{T},\cal{F})$ in $\coh \mathbb{P}^{1}$ are classified as below, where $P$ is a non-empty subset of $\mathbb{P}^{1}$ and $n$ is an integer.
\end{prop}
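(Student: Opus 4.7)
The plan is to combine the classification of finest stability data in Proposition \ref{Finest stab data in proj line} with the torsion-pair construction of Proposition \ref{torsion pair construction} (together with Proposition \ref{torsion pair} for the converse). Since Proposition \ref{semistable obj in proj line} ensures that every stability data on $\coh\mathbb{P}^{1}$ admits a refinement to a finest one, every torsion pair $(\cal{T},\cal{F})$ in $\coh\mathbb{P}^{1}$ arises from some decomposition $\Phi=\Phi_{-}\cup\Phi_{+}$ where $\Phi=\mathbb{Z}\cup\mathbb{P}^{1}$ is the linearly ordered index set of a finest stability datum; conversely, each such decomposition yields a torsion pair. Hence the task reduces to enumerating the possible upper sets $\Phi_{+}$ up to coincidence of the resulting pair.

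Next I would analyse the shape of $\Phi_{+}$. Since $\mathbb{Z}$ carries its natural order and is placed entirely below $\mathbb{P}^{1}$, the intersection $\Phi_{+}\cap\mathbb{Z}$ is an upper set of $\mathbb{Z}$ and so has the form $\{m\in\mathbb{Z}\mid m>n\}$ for some $n\in\mathbb{Z}\cup\{\pm\infty\}$. By Proposition \ref{Finest stab data in proj line}(2) the ordering on the $\mathbb{P}^{1}$-part is arbitrary, so by suitable choice of ordering on $\mathbb{P}^{1}$ (elements of a prescribed set placed above elements of its complement) the set $P:=\Phi_{+}\cap\mathbb{P}^{1}$ can be any subset of $\mathbb{P}^{1}$. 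The upper-set condition across $\mathbb{Z}$ and $\mathbb{P}^{1}$ forces, however, that whenever some line bundle lies in $\cal{T}$ (i.e.\ $n<+\infty$), every torsion sheaf $S_{{\textbf{\textit{x}}}}$ must also lie in $\cal{T}$, so necessarily $P=\mathbb{P}^{1}$. Discarding the trivial pairs $(\coh\mathbb{P}^{1},0)$ (from $n=-\infty$, $P=\mathbb{P}^{1}$) and $(0,\coh\mathbb{P}^{1})$ (from $n=+\infty$, $P=\emptyset$) leaves precisely two families:
\begin{itemize}
 \item [(i)] $n=+\infty$ and $\emptyset\neq P\subseteq\mathbb{P}^{1}$, giving $\cal{T}=\langle S_{{\textbf{\textit{x}}}}\mid {\textbf{\textit{x}}}\in P\rangle$ and $\cal{F}=\langle\mathcal{O}(m),\, S_{{\textbf{\textit{y}}}}\mid m\in\mathbb{Z},\, {\textbf{\textit{y}}}\in\mathbb{P}^{1}\setminus P\rangle$;
 \item [(ii)] $n\in\mathbb{Z}$ and $P=\mathbb{P}^{1}$, giving $\cal{T}=\langle\mathcal{O}(m),\, S_{{\textbf{\textit{x}}}}\mid m>n,\, {\textbf{\textit{x}}}\in\mathbb{P}^{1}\rangle$ and $\cal{F}=\langle\mathcal{O}(m)\mid m\leq n\rangle$.
\end{itemize}

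The main obstacle lies in the second step, where one must verify both that every pair $(n,P)$ satisfying the compatibility constraint is actually realised by some linear ordering on $\mathbb{P}^{1}$ (so that the two families above are genuine), and that distinct pairs $(n,P)$ yield distinct torsion pairs so that no overlap or omission occurs. The first point is handled by the explicit ordering construction mentioned above; the second is secured by inspecting the Hom-computations from the start of Section 6 (torsion sheaves admit no maps to line bundles; torsion sheaves supported at distinct points are Hom-orthogonal; $\Hom(\mathcal{O}(m),\mathcal{O}(m'))\neq 0$ iff $m\leq m'$; and $\Hom(\mathcal{O}(m),S_{{\textbf{\textit{x}}}})\neq 0$ for all $m$ and ${\textbf{\textit{x}}}$), which show that each listed $(\cal{T},\cal{F})$ indeed satisfies $\cal{F}=\cal{T}^{\bot}$ and $\cal{T}={}^{\bot}\cal{F}$ and that different parameters give inequivalent pairs.
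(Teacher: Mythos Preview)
Your proposal is correct and follows exactly the route the paper indicates: the paper does not give an explicit proof of this proposition but simply states, immediately before it, that the classification is ``a consequence of Propositions \ref{Finest stab data in proj line} and \ref{torsion pair construction}''. Your argument fills in precisely these details---using that every stability datum refines to a finest one (Proposition \ref{semistable obj in proj line}), enumerating the upper sets $\Phi_+$ of $\mathbb{Z}\cup\mathbb{P}^1$, and exploiting the freedom in the ordering on the $\mathbb{P}^1$-part to realise arbitrary $P$---so there is nothing to add beyond noting that your reference to ``Proposition \ref{Finest stab data in proj line}(2)'' should point instead to the arbitrariness clause in the statement of that proposition (or to Proposition \ref{semistable obj in proj line}(2)).
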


\begin{table}[h]
\caption{Non-trivial torsion pairs $(\cal{T},\cal{F})$ in $\coh \mathbb{P}^{1}$}
\begin{tabular}{|c|c|c|c|c|}
  \hline
 \makecell*[c]{$\cal{T}$}&$\cal{F}$\\
\hline
 \makecell*[c]{$\langle S_{{\textbf{\textit{x}}}} \mid {\textbf{\textit{x}}} \in P \subseteq \mathbb{P}^{1}\rangle$}&$\langle  \mathcal{O}(m), S_{{\textbf{\textit{x}}}} \mid   m \in \mathbb{Z},  {\textbf{\textit{x}}}\in \mathbb{P}^{1}\backslash P\rangle$ \\
  \hline
 \makecell*[c]{$\langle  \mathcal{O}(m), S_{{\textbf{\textit{x}}}} \mid m > n, {\textbf{\textit{x}}} \in \mathbb{P}^{1}  \rangle$}&$\langle \mathcal{O}(m) \mid m \leq n \rangle$\\
  \hline
 \end{tabular}
 \label{table 2}
\end{table}

 We remark that the torsion pair of the first case is not functorially finite, while it is functorially finite for the second case, and it is determined by the canonical tilting sheaf $\co(n)\oplus \co(n+1)$.

\section{Elliptic curves}

In this section we investigate the stability data for the category $\coh \mathbb{T}$ of coherent sheaves over a smooth
elliptic curve $\mathbb{T}$. We will classify finest stability data and torsion pairs for $\coh \mathbb{T}$.

We recall from \cite{Atiy,Lenzing,LM} for the main properties of the category $\coh \mathbb{T}$ as below. Similar as $\coh \mathbb{P}^{1}$, the category $\coh \mathbb{T}$ is a (skeletally) small and Hom-finite hereditary abelian  $\textbf{k}$-linear category, satisfying the $1$-Calabi-Yau property
$$\Ext^{1}(X,Y) \cong \DHom(Y, X),$$
where $\D=\Hom_\textbf{k}(-, \textbf{k}) $.
The subcategory ${\rm{coh}}_0 \mathbb{T}$ of torsion sheaves splits into a coproduct of connected subcategories $\coprod _{{\textbf{\textit{x}}}\in\mathbb{T}} \textbf{T}_{{\textbf{\textit{x}}}}$,  whose associated AR-quivers $\Gamma(\textbf{T}_{{\textbf{\textit{x}}}})$ are stable tubes of rank one generated by the simple sheaves $S_{{\textbf{\textit{x}}}}$. The subcategory $\cal{A}_{q}\; (q\in\mathbb{Q})$ generated by all indecomposable vector bundles of slope $q$ splits into a coproduct of connected tube subcategories $\coprod _{{\textbf{\textit{x}}}\in\mathbb{T}} \cal{A}_{q,{\textbf{\textit{x}}}}$, whose associated AR-quivers $\Gamma(\cal{A}_{q,{\textbf{\textit{x}}}})$ are stable tubes of rank one generated by the unique quasi-simple sheaves $S_{q,{\textbf{\textit{x}}}}$.
It turns out that $\coh\mathbb{T}=\bigvee_{q \in \mathbb{Q} \cup \{\infty\}} \cal{A}_{q}$, where $\cal{A}_{\infty}:={\rm{coh}}_0 \mathbb{T}$. There exists a nonzero morphism from $\cal{A}_{q}$ to $\cal{A}_{r}$ if and only if $q\leq r$. Moreover, if $L$ is a line bundle and $S_{{\textbf{\textit{x}}}}$ is a simple sheaf, then $\Hom(L, S) \cong\textbf{k}$.

The major difference between $\coh \mathbb{T}$ and $\coh \mathbb{P}^{1}$ are:

(1)\ the Grothendieck group K$_{0}(\mathbb{T})$ has infinite rank while  K$_{0}(\mathbb{P}^{1})$ is free abelian of finite rank, thus permitting a close investigation of the Euler form, roots and radical vectors;

(2)\  all simple ordinary sheaves on $\mathbb{P}^{1}$ do have the same class in K$_{0}(\mathbb{P}^{1})$, while simple sheaves concentrated at different points from $\mathbb{T}$ possess distinct classes in  K$_{0}(\mathbb{T})$;

(3)\  the line bundles of degree zero on $\mathbb{T}$ form a one-parameter family, indexed by $\mathbb{T}$, while $\mathbb{P}^{1}$ has a unique line bundle of degree zero (i.e., the structure sheaf);

(4)\  there are no exceptional sheaves on $\mathbb{T}$, while the study of exceptional sheaves on $\mathbb{P}^{1}$ forms a major item.

The following lemma shows that any indecomposable sheaf is semistable for $\coh \mathbb{T}$.

\begin{lem} \label{semistable objects for T}
Let $(\Phi, \{\Pi_{\varphi}\}_{\varphi\in\Phi})$ be a stability data on $\coh \mathbb{T}$, then any indecomposable sheaf is semistable.
\end{lem}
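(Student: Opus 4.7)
The plan is to mirror the proof of Lemma \ref{semistable objects for P1} on $\mathbb{P}^{1}$, replacing the line-bundle/torsion-sheaf dichotomy with the finer slope decomposition $\coh \mathbb{T}=\bigvee_{q\in\mathbb{Q}\cup\{\infty\}}\cal{A}_{q}$ available on $\mathbb{T}$. First, each simple sheaf $S_{\textbf{\textit{x}}}$ has no non-trivial subsheaves and is therefore stable under $(\Phi,\{\Pi_{\varphi}\})$, so $S_{\textbf{\textit{x}}}\in\Pi_{\varphi}$ for some $\varphi$; since $\Pi_{\varphi}$ is closed under extensions and direct summands, $\langle S_{\textbf{\textit{x}}}\rangle\subseteq\Pi_{\varphi}$, and every $S_{\textbf{\textit{x}}}^{(n)}$ in the rank-one tube $\cal{A}_{\infty,\textbf{\textit{x}}}$ generated by $S_{\textbf{\textit{x}}}$ is semistable. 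The same reduction shows that, for each rational slope $q$, once I prove the quasi-simple $S_{q,\textbf{\textit{x}}}$ is semistable, every $S_{q,\textbf{\textit{x}}}^{(m)}\in\langle S_{q,\textbf{\textit{x}}}\rangle\subseteq\Pi_{\varphi}$ is semistable, covering every indecomposable vector bundle.

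For the heart of the argument I suppose for contradiction that the HN-filtration of $S_{q,\textbf{\textit{x}}}$ has length $n\geq 2$ with maximal semistable subobject $A_{1}=X_{1}\subsetneq S_{q,\textbf{\textit{x}}}$ and minimal semistable quotient $A_{n}=S_{q,\textbf{\textit{x}}}/X_{n-1}$, so that $\Hom(A_{1},A_{n})=0$ by Definition~\ref{defn of stab data}(1). I then show that every indecomposable summand $Y$ of $A_{1}$ has slope $\mu(Y)<q$ and every indecomposable summand $Z$ of $A_{n}$ has slope $\mu(Z)>q$. For $Y$: the embedding $Y\hookrightarrow A_{1}\hookrightarrow S_{q,\textbf{\textit{x}}}$ together with the Hom-vanishing $\Hom(\cal{A}_{r},\cal{A}_{s})=0$ for $r>s$ forces $Y\in\cal{A}_{r}$ with $r\leq q$; the boundary case $r=q$ combined with orthogonality of distinct tubes in $\cal{A}_{q}$ and the rank-one structure of $\cal{A}_{q,\textbf{\textit{x}}}$ would force $Y=S_{q,\textbf{\textit{x}}}$, which a rank comparison rules out because $A_{1}\subsetneq S_{q,\textbf{\textit{x}}}$ is torsion-free. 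For $Z$ the dual argument applied to the surjection $S_{q,\textbf{\textit{x}}}\twoheadrightarrow A_{n}\twoheadrightarrow Z$ gives $\mu(Z)>q$.

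With any pair $(Y,Z)$ then satisfying $\mu(Y)<q<\mu(Z)$, the $1$-Calabi--Yau Serre duality yields $\Ext^{1}(Y,Z)\cong\DHom(Z,Y)=0$, and Riemann--Roch on $\mathbb{T}$ gives $\dim\Hom(Y,Z)=\rank(Y)\rank(Z)(\mu(Z)-\mu(Y))>0$ in the vector-bundle case, while for torsion $Z$ the non-vanishing $\Hom(Y,Z)\neq 0$ is immediate from mapping the vector bundle $Y$ onto a simple summand in the support of $Z$. Either way $\Hom(A_{1},A_{n})\supseteq\Hom(Y,Z)\neq 0$, contradicting the HN inequality, so $S_{q,\textbf{\textit{x}}}$ must be semistable. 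The main obstacle is the slope-sided summand analysis of the middle step: on $\mathbb{P}^{1}$ Grothendieck's splitting theorem automatically sorts subsheaves and quotients of a line bundle into line bundles and torsion sheaves, but on $\mathbb{T}$ I must combine orthogonality of distinct tubes in $\cal{A}_{q}$ with the rank-one structure of $\cal{A}_{q,\textbf{\textit{x}}}$ to exclude borderline summands at slope $q$; once that bookkeeping is in place, the passage from a slope inequality to $\Hom\neq 0$ via Serre duality and Riemann--Roch is standard.
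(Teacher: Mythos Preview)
Your proof is correct and follows essentially the same strategy as the paper's: handle torsion sheaves via the tube generated by a simple, then show each quasi-simple bundle $S_{q,\textbf{\textit{x}}}$ is semistable by comparing the slopes of its maximal semistable subobject and minimal semistable quotient and obtaining a non-vanishing $\Hom$ between them, which contradicts the HN ordering; finally extend to all indecomposables via the tube $\langle S_{q,\textbf{\textit{x}}}\rangle$.

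The only difference is the level of detail in the middle step. The paper simply asserts that the maximal semistable subsheaf $E_1$ of a quasi-simple $E$ has slope $q_1<q$ and the minimal semistable quotient $E_2$ has slope $q_2>q$, implicitly using that quasi-simple bundles on $\mathbb{T}$ are precisely the slope-stable sheaves (so any proper subsheaf has strictly smaller slope, any proper quotient strictly larger). You instead unpack this by passing to indecomposable summands $Y$ of $A_1$ and $Z$ of $A_n$, ruling out the boundary slope $q$ via tube orthogonality and a rank count, and then computing $\Hom(Y,Z)\neq 0$ explicitly through Serre duality and Riemann--Roch. Your route is longer but self-contained; the paper's is terser because it leans on the known equivalence between ``quasi-simple'' and ``slope-stable'' on an elliptic curve.
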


\begin{proof}
Obviously, any simple sheaf $S_{{\textbf{\textit{x}}}}$ in $\coh \mathbb{T}$ is stable since it has no non-trivial subsheaves. Then $S_{\textbf{\textit{x}}}\in\Pi_{\varphi}$ for some $\varphi$. It follows that the subcategory $\langle S_{{\textbf{\textit{x}}}}\rangle\subseteq \Pi_{\varphi}$, hence any torsion sheaf $S_{{\textbf{\textit{x}}}}^{(n)}\in\langle S_{{\textbf{\textit{x}}}}\rangle$ is semistable.

For any quasi-simple vector bundle $E\in\coh \mathbb{T}$ of slope $q$, if it is not semistable, then its maximal semistable subsheaf is given by a sheaf $E_1$ of slope $q_1 < q$, and its minimal semistable quotient sheaf is given by another sheaf $E_2$ of slope $q_2>q$. Then $\Hom(E_1, E_2)\neq 0$ yields a contradiction. Hence any quasi-simple vector bundle is stable. In general, for any indecomposable vector bundle $F$, we know that the top of $F$ (=$\mathrm{top} (F)$) is quasi-simple, and $F$ belongs to the subcategory $\langle \mathrm{top} (F)\rangle$, whose AR-quiver is a homogeneous tube. Hence $\mathrm{top} (F)$ is semistable implies $F$ is semistable.
We are done.
\end{proof}

There is a classical slope function $\mu: \ind (\coh  {\mathbb{T}})\to \mathbb{Q}\cup \{\infty\}$ on the category of indecomposable coherent sheaves over $\mathbb{T}$, satisfying $\mu(E)=\deg(E) / \rank(E)$ for any sheaf $E$, c.f. \cite[Part 1]{Atiy}. Then $\mu(E)\in \mathbb{Q}$ when $E$ is a vector bundle, and $\mu(X)=\infty$ for any torsion sheaf $X$.
Denote by $\overline{\mathbb{Q}}=\mathbb{Q}\cup \{\infty\}$, which is a  linearly ordered set in the natrual way. Then the slope function gives a stability data $(\overline{\mathbb{Q}}, \{P_{\psi}\}_{\psi\in\overline{\mathbb{Q}}})$, called \emph{slope stability data}, where $$P_{\psi}=\langle X\in \ind (\coh \mathbb{T})\;|\; \mu(X)=\psi\rangle.$$

By using the same proof as in Proposition \ref{refine from slope  for P1}, one obtains that
\begin{prop} \label{refine from slope  for T}
Each finest stability data on $\coh {\mathbb{T}}$ is a refinement of the slope stability data $(\overline{\mathbb{Q}}, \{P_{\psi}\}_{\psi\in\overline{\mathbb{Q}}})$.
\end{prop}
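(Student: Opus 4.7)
The plan is to mimic the proof of Proposition \ref{refine from slope for P1} verbatim, replacing the integer slope lattice for $\mathbb{P}^1$ by the rational slope lattice for $\mathbb{T}$. The essential ingredients are already in place: Lemma \ref{semistable objects for T} guarantees that every indecomposable sheaf in $\coh \mathbb{T}$ is semistable under any stability data, and the paper records the following slope-comparison fact for $\coh\mathbb{T}$: $\Hom(\cal{A}_q,\cal{A}_r)\neq 0$ if and only if $q\leq r$. Equivalently, for any indecomposable $X,Y$ with $\mu(X)>\mu(Y)$, one has $\Hom(X,Y)=0$.

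Let $(\Phi,\{\Pi_\varphi\}_{\varphi\in\Phi})$ be a finest stability data on $\coh\mathbb{T}$. The first step is to construct a comparison map $r\colon\Phi\to\overline{\mathbb{Q}}$. For any two nonzero indecomposable sheaves $X,Y\in\Pi_\varphi$, Theorem \ref{iff cond for finest} gives $\Hom(X,Y)\neq 0\neq\Hom(Y,X)$; combined with the slope-comparison fact above, this forces $\mu(X)=\mu(Y)$. Hence the assignment $\varphi\mapsto \mu(X)$ (for any indecomposable $X\in\Pi_\varphi$) is well-defined, yielding a map $r\colon\Phi\to\overline{\mathbb{Q}}$. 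By Lemma \ref{semistable objects for T} every indecomposable sheaf is semistable, so $r$ hits every slope in $\overline{\mathbb{Q}}$ and is therefore surjective.

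It then remains to verify the two conditions of Definition \ref{finer or coarser}. For (1), if $\varphi_1>\varphi_2$ in $\Phi$ and $X_i\in\Pi_{\varphi_i}$ is indecomposable, then $\Hom(X_1,X_2)=0$ by the stability axiom, so by the slope-comparison fact $\mu(X_1)\geq \mu(X_2)$, i.e.\ $r(\varphi_1)\geq r(\varphi_2)$. For (2), $P_\psi$ is by definition generated by the indecomposables of slope $\psi$; every such indecomposable lies in some $\Pi_\varphi$ with $r(\varphi)=\psi$, hence
$$P_\psi=\langle X\in\ind(\coh\mathbb{T})\mid \mu(X)=\psi\rangle=\langle \Pi_\varphi\mid \varphi\in r^{-1}(\psi)\rangle.$$

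I expect no serious obstacle, since the argument is structurally identical to the projective line case; the only new feature of $\coh\mathbb{T}$ that one might worry about is the presence of nontrivial endomorphisms and Homs between distinct indecomposables of the same slope (each $\cal{A}_{q,\textbf{\textit{x}}}$ is a homogeneous tube). However, this only means that $r$ is far from injective: a single $\psi\in\overline{\mathbb{Q}}$ typically corresponds to a large family of phases $\varphi\in r^{-1}(\psi)$, parametrised by data living inside the individual tubes $\cal{A}_{q,\textbf{\textit{x}}}$. Since the statement requires only surjectivity and order-preservation of $r$, not injectivity, this causes no difficulty and simply reflects the fact that the slope stability data is very far from being finest on $\coh\mathbb{T}$.
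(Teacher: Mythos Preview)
Your proposal is correct and is precisely the approach the paper takes: the paper's own proof is simply the one-line remark ``By using the same proof as in Proposition \ref{refine from slope for P1}, one obtains that\ldots'', and what you have written is exactly that proof transported to $\coh\mathbb{T}$, with the same three ingredients (Lemma \ref{semistable objects for T}, Theorem \ref{iff cond for finest}, and the slope-comparison fact for $\coh\mathbb{T}$).
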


The following result describes semistable subcategories for finest stability data on $\coh \mathbb{T}$.

\begin{prop} \label{semistable obj in elliptic curve}
Let $(\Phi,\{\Pi_{\varphi}\}_{\varphi\in\Phi})$ be a finest stability data on $\coh \mathbb{T}$. Then each semistable subcategory has the form $\langle S_{q,{\textbf{\textit{x}}}} \rangle$ for some $ q \in \overline{\mathbb{Q}}, {\textbf{\textit{x}}} \in \mathbb{T}$.
Further, the phases for the stable sheaves satisfy:
\begin{itemize}
  \item[(1)] $ \bm \phi(S_{p,{\textbf{\textit{x}}}}) <  \bm \phi(S_{q,{\textbf{\textit{y}}}});
  \quad
  \forall \ p < q,  {\textbf{\textit{x}}},  {\textbf{\textit{y}}}\in \mathbb{T}; $
  \item[(2)] there are no restrictions on the choice of ordering on the set $\{\bm \phi(S_{p, {\textbf{\textit{x}}}}), \;{\textbf{\textit{x}}} \in \mathbb{T}\}$.
\end{itemize}
 Consequently, any stability data on $\coh \mathbb{T}$ can be refined to a finest one.
\end{prop}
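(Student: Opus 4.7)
\smallskip
\noindent\textbf{Proof proposal.}
The plan is to combine the slope-refinement statement of Proposition~\ref{refine from slope for T} with the finest-criterion Theorem~\ref{iff cond for finest} and the orthogonal block decomposition $\cal{A}_{q}=\coprod_{{\textbf{\textit{x}}}\in\mathbb{T}}\cal{A}_{q,{\textbf{\textit{x}}}}$.
First I fix a finest stability data $(\Phi,\{\Pi_{\varphi}\}_{\varphi\in\Phi})$ and $\varphi\in\Phi$.
By Proposition~\ref{refine from slope for T} there is an order-preserving surjection $r\colon\Phi\to\overline{\mathbb{Q}}$ with $\Pi_{\varphi}\subseteq P_{r(\varphi)}=\cal{A}_{r(\varphi)}$; set $q=r(\varphi)$.
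Because $\cal{A}_{q}$ is the coproduct of the pairwise Hom-orthogonal tubes $\cal{A}_{q,{\textbf{\textit{x}}}}$, every object of $\cal{A}_{q}$ (in particular any $X\in\Pi_{\varphi}$) splits as $X=\bigoplus_{{\textbf{\textit{x}}}}X_{{\textbf{\textit{x}}}}$ with $X_{{\textbf{\textit{x}}}}\in\cal{A}_{q,{\textbf{\textit{x}}}}$, and each direct summand $X_{{\textbf{\textit{x}}}}$ lies again in $\Pi_{\varphi}$ since semistable subcategories are closed under direct summands.
Applying Theorem~\ref{iff cond for finest} to non-zero $X_{{\textbf{\textit{x}}}}\in\Pi_{\varphi}\cap\cal{A}_{q,{\textbf{\textit{x}}}}$ and $Y_{{\textbf{\textit{y}}}}\in\Pi_{\varphi}\cap\cal{A}_{q,{\textbf{\textit{y}}}}$ with ${\textbf{\textit{x}}}\neq{\textbf{\textit{y}}}$ would force a non-zero morphism between these orthogonal tubes, which is impossible; hence $\Pi_{\varphi}\subseteq\cal{A}_{q,{\textbf{\textit{x}}}}$ for some unique ${\textbf{\textit{x}}}$.
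Conversely, all indecomposables $S_{q,{\textbf{\textit{x}}}}^{(n)}$ of the rank-one tube $\cal{A}_{q,{\textbf{\textit{x}}}}$ are semistable by Lemma~\ref{semistable objects for T}, and since $\Hom(S_{q,{\textbf{\textit{x}}}}^{(m)},S_{q,{\textbf{\textit{x}}}}^{(n)})\neq 0$ for all $m,n\geq 1$, they share a common phase, so $\Pi_{\varphi}=\cal{A}_{q,{\textbf{\textit{x}}}}=\langle S_{q,{\textbf{\textit{x}}}}\rangle$.

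For the phase inequalities I argue directly from the compatibility of $r$ with the orders. For $p<q$ and ${\textbf{\textit{x}}},{\textbf{\textit{y}}}\in\mathbb{T}$, the two stable sheaves lie in $\Pi_{\varphi_{p,{\textbf{\textit{x}}}}}$ and $\Pi_{\varphi_{q,{\textbf{\textit{y}}}}}$ with $r(\varphi_{p,{\textbf{\textit{x}}}})=p<q=r(\varphi_{q,{\textbf{\textit{y}}}})$. If $\varphi_{p,{\textbf{\textit{x}}}}\geq\varphi_{q,{\textbf{\textit{y}}}}$, then $r$ being order-preserving would give $p\geq q$, a contradiction, so $\bm\phi(S_{p,{\textbf{\textit{x}}}})<\bm\phi(S_{q,{\textbf{\textit{y}}}})$. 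For (2) the point is that the Hom-orthogonality of $\cal{A}_{q,{\textbf{\textit{x}}}}$ and $\cal{A}_{q,{\textbf{\textit{y}}}}$ for ${\textbf{\textit{x}}}\neq{\textbf{\textit{y}}}$ makes the axiom $\Hom(\Pi_{\varphi'},\Pi_{\varphi''})=0$ vacuous in both directions, hence any linear order on $\{\varphi_{q,{\textbf{\textit{x}}}}\}_{{\textbf{\textit{x}}}\in\mathbb{T}}$ is admissible; conversely, each such order yields a legitimate finest stability datum.

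For the consequence that every stability datum refines to a finest one, I apply the local-refinement Proposition~\ref{local refinement construction} in two stages. Given $(\Phi,\{\Pi_{\varphi}\}_{\varphi\in\Phi})$, on each $\Pi_{\varphi}$ I introduce the slope filtration indexed by $I_{\varphi}=\{q\in\overline{\mathbb{Q}}:\Pi_{\varphi}\cap\cal{A}_{q}\neq 0\}$, ordered by $\overline{\mathbb{Q}}$; the classical Harder--Narasimhan by slope provides the required HN-filtration for every object of $\Pi_{\varphi}$. This first local refinement lands every semistable subcategory inside a single $\cal{A}_{q}$. I then apply a second local refinement: within each $\Pi_{\varphi}\cap\cal{A}_{q}$, choose any linear ordering on the set $\{{\textbf{\textit{x}}}\in\mathbb{T}:\Pi_{\varphi}\cap\cal{A}_{q,{\textbf{\textit{x}}}}\neq 0\}$; because these tubes are Hom-orthogonal, the HN-filtration is simply the direct-sum decomposition. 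The resulting data has each semistable subcategory equal to a single homogeneous tube $\cal{A}_{q,{\textbf{\textit{x}}}}$, and by Theorem~\ref{iff cond for finest} it is finest.

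The main obstacle I foresee is step one of the argument, where I must carefully invoke closure of $\Pi_{\varphi}$ under direct summands in the non-connected category $\cal{A}_{q}$ to split an object of $\Pi_{\varphi}$ across the tubes; once this split is established, Theorem~\ref{iff cond for finest} does the rest almost mechanically.
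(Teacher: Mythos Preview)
Your proposal is correct and follows essentially the same approach as the paper: both rest on Lemma~\ref{semistable objects for T} (every indecomposable is semistable) together with Theorem~\ref{iff cond for finest} and the Hom-orthogonality of distinct tubes at a fixed slope. The paper compresses this into a single line by invoking the explicit Hom-characterisation $\Hom(S_{p,{\textbf{\textit{x}}}},S_{q,{\textbf{\textit{y}}}})\neq 0 \iff p<q$ or $(p,{\textbf{\textit{x}}})=(q,{\textbf{\textit{y}}})$, whereas you unfold the same content structurally by first passing through Proposition~\ref{refine from slope for T} to land in a single $\cal{A}_q$ and then splitting along the tube decomposition; your explicit two-stage local refinement for the ``Consequently'' clause is more detailed than the paper's one-word ``immediately'', but the underlying mechanism is identical.
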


\begin{proof}
By Lemma \ref{semistable objects for T}, we know that any indecomposable sheaf is semistable in $\coh \mathbb{T}$. Notice that for any $p, q \in \overline{\mathbb{Q}}, \; {\textbf{\textit{x}}},  {\textbf{\textit{y}}}\in \mathbb{T}$, $\Hom(S_{p,{\textbf{\textit{x}}}}, $ $ S_{q,{\textbf{\textit{y}}}}) \neq 0$ if and only if $p< q$, or $(p,{\textbf{\textit{x}}})=(q,{\textbf{\textit{y}}})$.
Then the result follows from Theorem \ref{iff cond for finest} and Lemma \ref{semistable objects for T} immediately.
\end{proof}

Note that $\overline{\mathbb{Q}}=\mathbb{Q}\cup \{\infty\}$ is a linearly ordered set in the natural way, i.e., $p<q<\infty$ for any $p<q \in\mathbb{Q}$. Meanwhile, $\mathbb{T}$ can be viewed as a linearly ordered set by fixing any choice of ordering on the points of the elliptic curve $\mathbb{T}$. Hence, for any $q\in\overline{\mathbb{Q}}$, $\{q\}\times\mathbb{T}$ becomes a linearly ordered set by fixing any choice of ordering on the points of $\mathbb{T}$, and then the product $\overline{\mathbb{Q}}\times \mathbb{T}=\bigcup_{q\in\overline{\mathbb{Q}}}(\{q\}\times\mathbb{T})$ also forms a linearly ordered set under the lexicographical order.
There are infinitely many ordered relations on the set $\overline{\mathbb{Q}}\times \mathbb{T}$, depending on the choices of ordering on the set $\{q\}\times\mathbb{T}$ for $q\in\overline{\mathbb{Q}}$.

As a consequence of Proposition \ref{semistable obj in elliptic curve}, we obtain the following classification result for finest stability data on $\coh \mathbb{T}$, parameterized by the linearly ordered set $\overline{\mathbb{Q}}\times \mathbb{T}$.

\begin{prop} \label{Finest stab data in elliptic curve} Any finest stability data on $\coh \mathbb{T}$ has the form $$(\overline{\mathbb{Q}}\times \mathbb{T}, \{\Pi_{(q,{\textbf{\textit{x}}})}\}_{(q,{\textbf{\textit{x}}})\in\overline{\mathbb{Q}}\times \mathbb{T}}),$$ where $\overline{\mathbb{Q}}\times \mathbb{T}$ is an arbitrary linearly ordered set defined as above, and $\Pi_{(q,{\textbf{\textit{x}}})}=\langle S_{q,{\textbf{\textit{x}}}} \rangle$ for any $(q,{\textbf{\textit{x}}})\in\overline{\mathbb{Q}}\times \mathbb{T}$.
\end{prop}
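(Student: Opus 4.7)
The plan is to derive this classification as a direct repackaging of Proposition \ref{semistable obj in elliptic curve}, together with a short converse check. Given a finest stability data $(\Phi,\{\Pi_{\varphi}\}_{\varphi\in\Phi})$ on $\coh\mathbb{T}$, Proposition \ref{semistable obj in elliptic curve} tells me that every $\Pi_{\varphi}$ has the form $\langle S_{q,{\textbf{\textit{x}}}}\rangle$ for some $(q,{\textbf{\textit{x}}})\in\overline{\mathbb{Q}}\times\mathbb{T}$. The assignment $\varphi\mapsto(q,{\textbf{\textit{x}}})$ is well defined because $S_{q,{\textbf{\textit{x}}}}$ is the unique quasi-simple object of the homogeneous tube $\langle S_{q,{\textbf{\textit{x}}}}\rangle$, so the pair $(q,{\textbf{\textit{x}}})$ is recoverable from $\Pi_{\varphi}$. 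It is injective since $\langle S_{q,{\textbf{\textit{x}}}}\rangle\cap\langle S_{q',{\textbf{\textit{x}}}'}\rangle$ is zero whenever $(q,{\textbf{\textit{x}}})\neq(q',{\textbf{\textit{x}}}')$, and surjective because Lemma \ref{semistable objects for T} shows every quasi-simple $S_{q,{\textbf{\textit{x}}}}$ is semistable, so lies in some $\Pi_{\varphi}$.

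Transporting the linear order on $\Phi$ along this bijection gives a linear order on $\overline{\mathbb{Q}}\times\mathbb{T}$. By condition (1) of Proposition \ref{semistable obj in elliptic curve} this transported order satisfies $(p,{\textbf{\textit{x}}})<(q,{\textbf{\textit{y}}})$ whenever $p<q$, while condition (2) says the restriction to each fibre $\{q\}\times\mathbb{T}$ can be any linear order. Hence the order on $\overline{\mathbb{Q}}\times\mathbb{T}$ is exactly a lexicographic one with an arbitrary choice of linear order on each $\{q\}\times\mathbb{T}$, which is precisely the family of orderings described before the statement.

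For the converse, I would fix an arbitrary linear order on each $\{q\}\times\mathbb{T}$, form the lexicographic product, and define $\Pi_{(q,{\textbf{\textit{x}}})}:=\langle S_{q,{\textbf{\textit{x}}}}\rangle$. The $\Hom$-vanishing axiom splits into two cases: if $p>q$ one uses the standard slope dichotomy $\Hom(\mathcal{A}_{p},\mathcal{A}_{q})=0$ for coherent sheaves over $\mathbb{T}$, while if $p=q$ but ${\textbf{\textit{x}}}\neq{\textbf{\textit{y}}}$ the two tubes $\mathcal{A}_{q,{\textbf{\textit{x}}}}$ and $\mathcal{A}_{q,{\textbf{\textit{y}}}}$ are orthogonal by the decomposition of $\mathcal{A}_{q}$. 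The HN-filtration for a nonzero object $E\in\coh\mathbb{T}$ is then obtained by decomposing $E$ into indecomposable summands, grouping those lying in the same $\Pi_{(q,{\textbf{\textit{x}}})}$, and arranging the groups according to the chosen order; Lemma \ref{semistable objects for T} guarantees each summand is semistable, so each block is a semistable object. Finest-ness follows from Theorem \ref{iff cond for finest}, since for any nonzero $X,Y\in\langle S_{q,{\textbf{\textit{x}}}}\rangle$ (a homogeneous tube of rank one) we have $\Hom(X,Y)\neq 0\neq\Hom(Y,X)$.

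The main technical point I expect to need care with is the $\Hom$-vanishing in the converse when $p>q$: one must ensure that $\Hom(E,F)=0$ holds not only between quasi-simples but between arbitrary objects of $\langle S_{p,{\textbf{\textit{x}}}}\rangle$ and $\langle S_{q,{\textbf{\textit{y}}}}\rangle$. Since each such object is an iterated self-extension of a single quasi-simple (the tubes are homogeneous), this reduces to the classical statement $\Hom(\mathcal{A}_{p},\mathcal{A}_{q})=0$ for $p>q$ recalled at the beginning of this section; once this is noted, the rest of the argument is bookkeeping.
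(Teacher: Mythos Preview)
Your proposal is correct and follows the same route as the paper, which states the result as an immediate consequence of Proposition \ref{semistable obj in elliptic curve} without writing out a proof. Your treatment is strictly more detailed: the paper does not spell out the bijection $\Phi\to\overline{\mathbb{Q}}\times\mathbb{T}$, the converse verification, or the construction of HN-filtrations via direct-sum decomposition, all of which you handle correctly.
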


Note that each stability data on $\coh \mathbb{T}$ is coarser than a finest one. Then by Proposition \ref{torsion pair construction},  any torsion pair $(\cal{T}, \cal{F})$ can be obtained from the finest stability data.
As a consequence of Propositions \ref{Finest stab data in elliptic curve} and \ref{torsion pair construction}, we can obtain a classification of torsion pairs in $\coh \mathbb{T}$.

\begin{prop}
The torsion pairs $(\cal{T},\cal{F})$ in $\coh \mathbb{T}$ are classified as below, where $q\in\overline{\mathbb{Q}}$ and $P\subseteq\mathbb{T}$.
\end{prop}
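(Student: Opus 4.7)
The plan is to derive the classification directly from the finest-stability-data analysis already established. By Proposition~\ref{semistable obj in elliptic curve}, every stability data on $\coh \mathbb{T}$ refines to a finest one, and by Proposition~\ref{torsion pair construction} this means every torsion pair $(\cal{T},\cal{F})$ arises from a finest stability data $(\overline{\mathbb{Q}}\times\mathbb{T},\{\Pi_{(q,\textbf{\textit{x}})}\})$ together with a decomposition $\overline{\mathbb{Q}}\times\mathbb{T}=\Phi_{-}\cup\Phi_{+}$ satisfying $\varphi_-<\varphi_+$ for every $\varphi_{\pm}\in\Phi_{\pm}$. So my first step is to translate the problem into a combinatorial question: classify the order-preserving decompositions (i.e., Dedekind cuts) of $\overline{\mathbb{Q}}\times\mathbb{T}$, viewed with the lexicographic order described in Proposition~\ref{Finest stab data in elliptic curve}.

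Next, I will analyze such a cut. Since elements of $\Phi_{+}$ are strictly above elements of $\Phi_{-}$, the projection of $\Phi_{+}$ onto $\overline{\mathbb{Q}}$ is an upward closed subset, which (because $\overline{\mathbb{Q}}$ is linearly ordered) must either be all of $\overline{\mathbb{Q}}$, empty, or of the form $\{p\,:\,p\geq q\}$ or $\{p\,:\,p>q\}$ for a unique $q\in\overline{\mathbb{Q}}$; the first two possibilities give the two trivial torsion pairs. Excluding the trivial cases, there is a uniquely determined critical slope $q$ at which the cut lives, and the slice $\{q\}\times\mathbb{T}$ is partitioned into $P:=\{\textbf{\textit{x}}\in\mathbb{T}\,:\,(q,\textbf{\textit{x}})\in\Phi_{+}\}$ and its complement. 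Because Proposition~\ref{Finest stab data in elliptic curve} allows an arbitrary linear ordering on each slice $\{q\}\times\mathbb{T}$, every subset $P\subseteq\mathbb{T}$ can be realized as the $\Phi_{+}$-part of some finest stability data together with a compatible cut.

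The third step is to read off the classes $\cal{T},\cal{F}$. Using $\Pi_{(q,\textbf{\textit{x}})}=\langle S_{q,\textbf{\textit{x}}}\rangle$ and the decomposition $\coh\mathbb{T}=\bigvee_{q\in\overline{\mathbb{Q}}}\cal{A}_{q}$ with $\cal{A}_{q}=\coprod_{\textbf{\textit{x}}\in\mathbb{T}}\cal{A}_{q,\textbf{\textit{x}}}$, the cut at $(q,P)$ yields
\[
\cal{T}=\langle S_{p,\textbf{\textit{y}}}\,:\,p>q,\;\textbf{\textit{y}}\in\mathbb{T}\rangle\vee\langle S_{q,\textbf{\textit{x}}}\,:\,\textbf{\textit{x}}\in P\rangle,\quad
\cal{F}=\langle S_{q,\textbf{\textit{y}}}\,:\,\textbf{\textit{y}}\in\mathbb{T}\setminus P\rangle\vee\langle S_{p,\textbf{\textit{x}}}\,:\,p<q,\;\textbf{\textit{x}}\in\mathbb{T}\rangle.
\]
Conversely, any such pair is manifestly a torsion pair because it arises from Proposition~\ref{torsion pair} applied to the decomposition.

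The main technical point I expect to require care is the injectivity of the parametrisation, i.e.~that distinct admissible pairs $(q,P)$ really give distinct torsion pairs. Here the density of $\overline{\mathbb{Q}}$ helps: there is no ``next slope'' so different choices of $q$ cannot be absorbed into each other, and uniqueness of $q$ can be recovered from $\cal{T}$ (and $\cal{F}$) as the infimum of slopes of sheaves appearing in $\cal{T}$ but not forced by $\Hom$-vanishing from $\cal{F}$. Once $q$ is fixed, the set $P$ is recovered as $\{\textbf{\textit{x}}\in\mathbb{T}\,:\,S_{q,\textbf{\textit{x}}}\in\cal{T}\}$. The only mild redundancy to track is the boundary identifications at $q=\infty$ and the degenerate choices $P=\emptyset$ or $P=\mathbb{T}$, which I will handle by listing the corresponding rows of the classification table separately.
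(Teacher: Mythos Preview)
Your strategy is precisely the paper's: invoke Propositions~\ref{semistable obj in elliptic curve} and~\ref{torsion pair construction} to reduce the problem to order-theoretic cuts of the linearly ordered set $\overline{\mathbb{Q}}\times\mathbb{T}$ from Proposition~\ref{Finest stab data in elliptic curve}, then read off $\cal{T}$ and $\cal{F}$ from the two halves. The paper itself gives no further detail than that, so your write-up is in fact more explicit than the original.

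There is, however, a genuine gap in your second step. You assert that every upward-closed subset of $\overline{\mathbb{Q}}$ is empty, all of $\overline{\mathbb{Q}}$, or of the form $\{p:p\geq q\}$ or $\{p:p>q\}$ for some $q\in\overline{\mathbb{Q}}$. This is false: for any irrational $r\in\mathbb{R}$ the set $\{p\in\overline{\mathbb{Q}}:p>r\}$ is upward-closed but of none of these forms, since its infimum $r$ does not lie in $\overline{\mathbb{Q}}$. Each such irrational cut lifts to a cut of $\overline{\mathbb{Q}}\times\mathbb{T}$ and hence, via Proposition~\ref{torsion pair}, yields a bona fide torsion pair $(\cal{T}_r,\cal{F}_r)$ with $\cal{T}_r=\langle S_{p,{\textbf{\textit{x}}}}\mid p\in\mathbb{Q},\ p>r,\ {\textbf{\textit{x}}}\in\mathbb{T}\rangle$. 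This pair is not on your list: for any $q\in\mathbb{Q}$, pick a rational $p$ strictly between $q$ and $r$; then $S_{p,{\textbf{\textit{x}}}}$ belongs to exactly one of $\cal{T}_r$ and $\cal{T}_{(q,P)}$, so $\cal{T}_r\neq\cal{T}_{(q,P)}$ for every $P$. Thus your case analysis is incomplete; the cut parameter should be allowed to range over $\mathbb{R}\cup\{\infty\}$ (with the subset $P\subseteq\mathbb{T}$ relevant only when the cut lands on a rational slope), or an extra family of torsion pairs must be added. The paper's own table appears to share this omission, but the step in your argument is nonetheless incorrect as written.
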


\begin{table}[h]
\caption{All torsion pairs $(\cal{T},\cal{F})$ in $\coh \mathbb{T}$}
\begin{tabular}{|c|c|c|c|c|}
  \hline
 \makecell*[c]{$\cal{T}$}&$\cal{F}$\\
 \hline
 \makecell*[c]{$\langle  S_{p,{\textbf{\textit{x}}}} \mid   p>q, {\textbf{\textit{x}}} \in \mathbb{T}; \;\;\text{or} \;\; p=q, {\textbf{\textit{x}}}\in P\rangle$}&$\langle  S_{p,{\textbf{\textit{x}}}} \mid   p<q, {\textbf{\textit{x}}} \in \mathbb{T}; \;\;\text{or} \;\; p=q, {\textbf{\textit{x}}}\in \mathbb{T}\backslash P\rangle$ \\
 \hline
  \makecell*[c]{$\coh \mathbb{T}$}&$0$ \\
 \hline
 \end{tabular}
 \label{table 3}
\end{table}

\section{Weighted projective lines}

In this section we investigate the stability data for the category $\coh\mathbb{X}$ of coherent sheaves over a weighted projective line $\mathbb{X}$. We show that any stability data on $\coh\mathbb{X}$ can be refined to a finest one when $\mathbb{X}$ is of domestic type or tubular type. Moreover, we classify all the finest stability data on $\coh\mathbb{X}$ for $\mathbb{X}$ having weight type (2), as a by-product we obtain a classification of torsion pairs.

\subsection{\rm{\textbf{Weighted projective lines}}}

Following \cite{GL,Lenzing}, a \emph{weighted projective line} $\X=\X_ \mathbf{k}$
over $\mathbf{k}$ is given by a weight sequence ${\bf
p}=(p_{1},\ldots, p_{t})$ (called \emph{weight type}) of positive integers, and a sequence
${\boldsymbol\lambda}=(\lambda_{1},,\ldots, \lambda_{t}) $ of pairwise
distinct closed points in the projective line
$\mathbb{P}^{1}:=\mathbb{P}^{1}_ \mathbf{k} $ which can be normalized as
$\lambda_{1}=\infty, \lambda_{2}=0, \lambda_{3}=1$. More precisely,
let $\bbL=\bbL(\bf p)$ be the rank one abelian group (called  \emph{string group}) with generators
$\vec{x}_{1}, \ldots, \vec{x}_{t}$ and the relations
\[ p_{1}\vx_1=\cdots=p_{t}\vx_t=:\vec{c},\]
where $\vec{c}$ is called the \emph{canonical element} of
$\mathbb{L}$.
Each element $\vx\in\bbL$ has the \emph{normal form}
$\vx=\sum\limits_{i=1}^t l_i\vx_i+l\vc$ with $0\leq l_i\leq p_i-1$
and $l\in\bbZ$.

Denote by $S$ the commutative algebra
$$S=S({\bf p},{\boldsymbol\lambda})= \mathbf{k} [X_{1},\cdots, X_{t}]/I:
=  \mathbf{k}[x_{1}, \ldots, x_{t}],$$ where $I=(f_{3},\ldots,f_{t})$ is the
ideal generated by
$f_{i}=X_{i}^{p_{i}}-X_{2}^{p_{2}}+\lambda_{i}X_{1}^{p_{1}}$ for
$3\leq i\leq t$. Then $S$ is $\mathbb{L}$-graded by setting
$$\mbox{deg}(x_{i})=\vx_i\; \text{ for $1\leq i\leq t$.}$$
Finally, the weighted projective line associated with $\bf p$ and
$\boldsymbol\lz$ is defined to be
$$\X=\X_ \mathbf{k} ={\rm{Proj}}^{\bbL}S,$$
 the projective spectrum of $\bbL$-graded homogeneous prime ideals of $S$.

Denote by $p={\rm l.c.m.}(p_1,p_2,\cdots,p_t)$. There is a surjective group homomorphism $\delta\colon \bbL\rightarrow \mathbb{Z}$ given by $\delta(\vec{x}_i)=\frac{p}{p_i}$ for $1\leq i\leq t$.
Recall that the \emph{dualizing element} $\vec{\omega}$ in $\bbL$ is defined as $\vec{\omega}=(t-2)\vec{c}-\sum_{i=1}^t \vec{x}_i$. Hence we have $\delta(\vec{\omega})=p((t-2)-\sum_{i=1}^t\frac{1}{p_i})$.
The the weighted projective line $\X$ is called of \emph{domestic, tubular or wild types} provided that $\delta(\vec{\omega})<0$, $\delta(\vec{\omega})=0$ or $\delta(\vec{\omega})>0$ respectively. More precisely, we have the following trichotomy for $\X$ according to the weight sequence $\mathbf{p}$ (up to permutation of weights):
\begin{itemize}
  \item [(i)] domestic type: $(), (p), (p_{1},p_{2})$, $(2,2,p_3)$, $(2,3,3)$, $(2,3,4)$ and $(2,3,5)$;
  \item [(ii)] tubular type: $(2,2,2,2)$, $(3,3,3)$, $(4,4,2)$  and $(6,3,2)$;
  \item [(iii)] wild type: all the other cases.
\end{itemize}
Here, $()$ stands for the unweighted case, that is, the classical projective line.

The category of coherent sheaves on $\X$ is defined to be the
quotient category
$$\coh\X={\rm mod}^{\mathbb{L}}S/\mbox{mod}_{0}^{\mathbb{L}}S,$$
where ${\rm mod}^{\mathbb{L}}S$ is the category of finitely
generated $\mathbb{L}$-graded $S$-modules, while
$\mbox{mod}_{0}^{\mathbb{L}}S$ is the Serre subcategory of finite
length $\mathbb{L}$-graded $S$-modules. For any $\vec{x}\in\bbL$, the grading shift $\vec{x}: E\mapsto E(\vec{x})$ gives an equivalence of $\coh\X$.
Moreover, $\coh\X$ is a hereditary abelian category with Serre
duality of the form
\begin{equation*} \Ext^1(X, Y)\cong\D\Hom(Y, X(\vec{\omega})).\end{equation*}
This implies the existence of
almost split sequences in $\coh\X$ with the Auslander--Reiten
translation $\tau$ given by the grading shift with $\vec{\omega}$.

It is known that $\coh \mathbb{X}$ admits a splitting torsion pair $({\rm{coh}}_0 \mathbb{X}, \vect \mathbb{X})$, where ${\rm{coh}}_0 \mathbb{X}$ and $\vect \mathbb{X}$ are full subcategories of torsion sheaves and vector bundles, respectively.
The subcategory ${\rm{coh}}_0 \mathbb{X}$ splits into a coproduct of connected tube subcategories $\coprod _{{\textbf{\textit{x}}}\in\mathbb{P}^{1}} \textbf{T}_{{\textbf{\textit{x}}}}$,  whose  associated AR-quivers $\Gamma(\textbf{T}_{{\textbf{\textit{x}}}})$ are stable tubes of finite rank. The subcategory $\vect \mathbb{X}$ contains line bundles, which have the forms $\co(\vec{x})$ for $\vec{x}\in\mathbb{L}$.

\subsection{\rm{\textbf{Finest stability data on $\coh  {\mathbb{X}}$ of domestic or tubular type}}}

In this subsection, we investigate the stability data on the category $\coh  {\mathbb{X}}$ of coherent sheaves for weighted projective lines of domestic or tubular type. We will show that each stability data on $\coh\mathbb{X}$ can be refined to a finest one.

Recall that there is a slope function $\mu: \ind (\coh  {\mathbb{X}})\to \mathbb{Q}\cup \{\infty\}$ on the category of coherent sheaves over $\mathbb{X}$, satisfying $\mu(E)=\deg(E) / \rank(E)$ for any sheaf $E$. Then $\mu(E)\in \mathbb{Q}$ when $E$ is a vector bundle, and $\mu(X)=\infty$ for any torsion sheaf $X$.
Denote by $\overline{\mathbb{Q}}=\mathbb{Q}\cup \{\infty\}$, which is a  linearly ordered set in the natural way.
This slope function gives a stability data $(\overline{\mathbb{Q}}, \{P_{\psi}\}_{\psi\in\overline{\mathbb{Q}}})$, called \emph{slope stability data}, where $$\overline{\mathbb{Q}}=\mathbb{Q}\cup \{\infty\}, \quad P_{\psi}=\langle X\in \ind (\coh \mathbb{X})\;|\; \mu(X)=\psi\rangle.$$
By assumption, i.e., $\X$ is of domestic or tubular type, each indecomposable sheaf is semistable. Moreover, $\Hom(X, Y)\neq 0$ implies $\mu(X)\leq \mu(Y)$ for any indecomposable sheaves $X,Y$. This slope function gives a decomposition of the category $\cal{A}=\coh \mathbb{X}=\add\{\cal{A}_{q}\,|\,q\in\overline{\mathbb{Q}}\}$, where $\cal{A}_{q}=\langle X\in \ind (\coh \mathbb{X})\;|\; \mu(X)=q\rangle$.

Recall that each subcategory $\cal{A}_q$ has one of the following forms:
\begin{itemize}
   \item [(1)] $\cal{A}_q$ is a semisimple category generated by finitely many orthogonal exceptional vector bundles, in case $\mathbb{X}$ is domestic type and $q\in \mathbb{Q}$;
   \item [(2)] $\cal{A}_q$ is an abelian category consisting of a family of connected orthogonal tube subcategories parameterized by the projective line, in case $q=\infty$ or $\mathbb{X}$ is of tubular type.
\end{itemize}

The main result of this subsection is as follows.

\begin{prop}\label{finest for wpl abelian} Each stability data on $\coh\mathbb{X}$ can be refined to a finest one.
\end{prop}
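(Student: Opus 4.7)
The plan is to proceed in two successive applications of the local refinement construction of Proposition \ref{local refinement construction}, starting from an arbitrary stability data $(\Phi,\{\Pi_\varphi\}_{\varphi\in\Phi})$ on $\coh\X$.

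First, I would refine by slope. For each $\varphi\in\Phi$ and each $q\in\overline{\mathbb{Q}}$, set $\Pi_{\varphi,q}:=\Pi_\varphi\cap\cal{A}_q$. Since every indecomposable sheaf is semistable with respect to the slope stability data and $\Hom(\cal{A}_p,\cal{A}_q)=0$ whenever $p>q$, every object of $\Pi_\varphi$ decomposes as a direct sum of objects lying in the various $\cal{A}_q$, and morphisms between $\Pi_{\varphi,q'}$ and $\Pi_{\varphi,q''}$ with $q'>q''$ vanish. Ordering $\{q\in\overline{\mathbb{Q}}\mid \Pi_{\varphi,q}\neq 0\}$ increasingly therefore yields a local stability data on $\Pi_\varphi$. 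Applying Proposition \ref{local refinement construction} produces a finer global stability data $(\Psi,\{Q_\psi\}_{\psi\in\Psi})$ in which every semistable subcategory $Q_\psi$ is contained in a single $\cal{A}_q$.

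Next, I would refine within each $\cal{A}_q$, using the dichotomy stated just before the proposition. If $\cal{A}_q$ is of type (1), i.e.\ semisimple with finitely many pairwise orthogonal exceptional vector bundles, then any extension-closed subcategory $Q_\psi\subseteq\cal{A}_q$ is the additive closure of a finite subset of these bundles; since they are pairwise orthogonal, an arbitrary linear order on this finite subset provides a local refinement into subcategories generated by a single exceptional object, which is finest by Theorem \ref{iff cond for finest}. If $\cal{A}_q$ is of type (2), i.e.\ a coproduct of orthogonal tubes $\coprod_{\textbf{\textit{x}}\in\mathbb{P}^1}\textbf{T}_{\textbf{\textit{x}}}$, then $Q_\psi$ splits as the additive closure of its intersections $Q_\psi\cap\textbf{T}_{\textbf{\textit{x}}}$; since distinct tubes are mutually orthogonal, these pieces can be ordered arbitrarily, and within each tube the pieces can be refined to a finest local stability data by Proposition \ref{finest stab data for cyclic quiver}.

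A second application of Proposition \ref{local refinement construction}, now with these local refinements inside each $Q_\psi\subseteq\cal{A}_q$, yields a stability data $(\Psi',\{R_{\psi'}\}_{\psi'\in\Psi'})$ finer than the original. Finally, to see it is finest, one invokes Theorem \ref{iff cond for finest}: on each $R_{\psi'}$ (which is either $\langle E\rangle$ for an exceptional bundle $E$, or of the form $\langle S_j^{(s)}\rangle$ inside some tube), any two nonzero objects admit nonzero morphisms in both directions. The main subtlety to address is that for tubular $\X$ and for $q=\infty$ one must refine simultaneously across an infinite family of tubes; this causes no termination problem because the local refinement construction is performed in parallel and Proposition \ref{finest stab data for cyclic quiver} guarantees finiteness of refinement within each individual tube.
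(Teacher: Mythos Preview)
Your proposal is correct and follows essentially the same approach as the paper: first refine by slope via Proposition \ref{local refinement construction} so that every semistable piece lies in a single $\cal{A}_q$, then refine each such piece further according to the dichotomy (1)/(2) into singletons generated by an exceptional bundle or into pieces inside individual tubes, invoking the argument of Proposition \ref{finest stab data for cyclic quiver} for the latter. Your write-up is in fact slightly more explicit than the paper's in verifying finestness via Theorem \ref{iff cond for finest} and in flagging the non-issue of refining infinitely many tubes in parallel.
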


\begin{proof}
Let $(\Phi,\{\Pi_{\varphi}\}_{\varphi\in\Phi})$ be a stability data on $\cal{A}:=\coh\mathbb{X}$. We first claim that
$(\Phi,\{\Pi_{\varphi}\}_{\varphi\in\Phi})$ can be refined to a finer one with
each semistable category is contained in some $\cal{A}_{q}:=\langle X\in \ind (\coh \mathbb{X})\;|\; \mu(X)=q\rangle$.

In fact, for any $\varphi\in\Phi$, define $I_{\varphi}=\{(\varphi,q)\ |\ q\in \overline{\mathbb{Q}},\,\, \Pi_{\varphi} \cap \cal{A}_q\neq 0\}$ and set $\Psi=\cup_{\varphi\in\Phi} I_{\varphi}$. Then  $\Psi$ is a linearly ordered set under the lexicographical order, i.e.,
 $(\varphi', q')>(\varphi'', q'')$ if and only if $\varphi'>\varphi''$, or $\varphi'=\varphi''$ and $q'>q''$.
Set $P_{\varphi, q}=\Pi_{\varphi} \cap \cal{A}_q$ for any $(\varphi,q)\in\Psi$.
Then by Proposition \ref{local refinement construction}, we obtain that
$(\Psi,\{P_{\varphi,q}\}_{(\varphi,q)\in\Psi})$ is a finer stability data than $(\Phi,\{\Pi_{\varphi}\}_{\varphi\in\Phi})$, with each $P_{\varphi,q}$ contained in some $\cal{A}_q$.

Recall that for weighted projective line of domestic or tubular type, the subcategory $\cal{A}_q$ is generated by finitely many orthogonal exceptional vector bundles, or $\cal{A}_q$ is a coproduct of connected orthogonal tube subcategories. Hence we can further make local refinement on $(\Phi,\{\Pi_{\varphi}\}_{\varphi\in\Phi})$ such that
each semistable category $\Pi_\varphi$ is generated by a unique exceptional vector bundle, or contained in a homogeneous tube, or contained in a non-homogeneous tube.

For the first two cases, the subcategory $\Pi_\varphi$ satisfies $\Hom(X,Y)\neq 0\neq \Hom(Y,X)$
for non-zero objects $X,Y\in\Pi_{\varphi}$.
For the third case, the semistable subcategory $\Pi_{\varphi}$ is a subcategory of a tube category. Then by similar arguments as in the proof of Proposition \ref{finest stab data for cyclic quiver},
we can make local refinement to obtain a finest stability data. This finishes the proof.
\end{proof}

\subsection{\rm{\textbf{Semistable sheaves on $\coh  {\mathbb{X}}$ for weight type (2)}}}

From now on, we focus on the weighted projective line $\mathbb{X}$ of weight type (2).
Recall that the group  $\mathbb{L} = \mathbb{L}(2)$ is the rank one abelian group with generators
$\vec{x}_{1}, \vec{x}_{2}$ and the relations $$ 2\vec{x}_{1}=\vec{x}_{2}:=\vec{c}.$$
Each element $\vx\in\bbL$ has the \emph{normal form}
$\vx=l\vc$ or $\vx=\vec{x}_1+l\vc$ for some $l\in\bbZ$. In this case, each indecomposable bundle in $\vect \mathbb{X}$ is a line
bundle, hence has the form $\mathcal{O}(\vec{x}), \vec{x} \in \mathbb{L}$.

The subcategory ${\rm{coh}}_0 \mathbb{X}(2)$ admits ordinary simple sheaves $S_{{\textbf{\textit{x}}}}$ for each ${\textbf{\textit{x}}} \in \mathbb{P}^{1} \setminus \{\infty\}$
and exceptional simple sheaves $S_{1,0}, S_{1,1}$.
The ordinary simple sheaf $S_{{\textbf{\textit{x}}}}$ is determined by the exact sequence
$$ 0\rightarrow \mathcal{O} \xrightarrow{X_{2}-{\textbf{\textit{x}}}X_{1}^{2}} \mathcal{O}(\vec{c})\longrightarrow  S_{{\textbf{\textit{x}}}} \rightarrow 0.$$
By contrast, multiplication by $X_{1}$ leads to the exceptional simples $S_{1,  j}$ for $ j\in\mathbb{Z}/2 \mathbb{Z}$:
$$ 0\rightarrow \mathcal{O}((j-1)\vec{x}_{1})\stackrel{X_{1}} \longrightarrow \mathcal{O}(j\vec{x}_{1})\rightarrow  S_{1, j} \rightarrow 0.$$

From now on, we assume $(\Phi,\{\Pi_{\varphi}\}_{\varphi\in\Phi})$ is a finest stability data on $\coh \mathbb{X}(2)$.
Obviously, all the ordinary simple sheaves $S_{{\textbf{\textit{x}}}}$ and exceptional simples $S_{1,  0}, S_{1,  1}$ in $\coh \mathbb{X}(2)$ are stable since they have no non-trivial subsheaves. Note that $S_{1,1}(\vec{\omega})=S_{1,0}$. Up to degree shift, we can assume $$\bm{\phi}(S_{1,0})<\bm{\phi}(S_{1,1}).$$
Then $S^{(2n)}_{1,1}$ is also semistable for any $n\geq 1$, and $\bm{\phi}(S_{1,0})<\bm{\phi}(S^{(2)}_{1,1})=
\bm{\phi}(S^{(2n)}_{1,1})<\bm{\phi}(S_{1,1})$ since $\langle S_{1,1}^{(2)}\rangle=\add\{S_{1,1}^{(2n)}, n\geq 1\}$.

For any line bundle $L$, we will denote by $S_{i, L} $ the unique exceptional simple sheaf satisfying  that $\Hom (L, S_{i, L} )\neq 0$.
For examples, $S_{1,  \mathcal{O}}=S_{1,  0}, S_{1,  \mathcal{O}(\vec{x}_{1})}=S_{1,  1} $.

\begin{lem} \label{semistable line bundle a)}
If a line bunle $L$ is not semistable, then $L(-\vec{x}_{1})$ is semistable and the HN-filtration of $L$ is given by the exact sequence
$$0\rightarrow L(-\vec{x}_{1}) \rightarrow  L\rightarrow S_{1,L}\rightarrow0.$$
\end{lem}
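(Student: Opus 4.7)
The plan is to assume $L$ is a non-semistable line bundle, analyze its Harder--Narasimhan filtration $0 = X_0 \subsetneq X_1 \subsetneq \cdots \subsetneq X_n = L$ (with $n \geq 2$ and factors $A_i = X_i/X_{i-1}$), and squeeze out enough constraints from the $\Hom$-geometry of $\coh \mathbb{X}(2)$ to force $n = 2$, $A_1 = L(-\vec{x}_1)$, and $A_2 = S_{1,L}$. Since the torsion pair $(\coh_0\mathbb{X}, \vect \mathbb{X})$ is splitting, every non-zero subobject of $L$ is a line bundle, so $A_1 = L(-k\vec{x}_1)$ for some integer $k \geq 1$ (using $\mathbb{L}(2) = \mathbb{Z}\vec{x}_1$), and $A_2,\ldots,A_n$ are semistable torsion sheaves.

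First I would record the following $\Hom$ facts for any line bundle $L'$ on $\mathbb{X}(2)$: $\Hom(L', S_{1,j}) \neq 0$ iff $L'$ has $\vec{x}_1$-parity $j$; and $\Hom(L', T) \neq 0$ for every semistable torsion sheaf $T$ of the remaining types, namely $T = S_{\textbf{\textit{w}}}^{(m)}$ (ordinary $\textbf{\textit{w}}$, $m \geq 1$) and $T = S_{1,1}^{(2m)}$ ($m \geq 1$). These are obtained by applying $\Hom(L', -)$ to the defining short exact sequences of $S_{1,j}$ and $S_{\textbf{\textit{w}}}$ and inducting on the length of $T$, using the Serre-duality vanishing $\Ext^1(L', T) = \D\Hom(T, L'(\vec{\omega})) = 0$ (here $\vec{\omega} = -3\vec{x}_1$ and $\Hom$ from any torsion sheaf to any line bundle vanishes).

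Next I would classify $A_2$. By the tube analyses of Section~4, the only candidates for the semistable subcategory $\Pi_{\bm \phi(A_2)}$ on torsion sheaves are $\langle S_{\textbf{\textit{w}}}\rangle$ (ordinary $\textbf{\textit{w}}$), and in $\textbf{T}_\infty$ the three categories $\langle S_{1,0}\rangle$, $\langle S_{1,1}^{(2)}\rangle$, $\langle S_{1,1}\rangle$. The HN orthogonality $\Hom(A_1, A_2) = 0$ combined with the $\Hom$ facts above rules out $A_2$ lying in an ordinary tube or in $\langle S_{1,1}^{(2)}\rangle$. Moreover $\Hom(A_1, S_{1,1}^{(2)}) \neq 0$ forces $\bm \phi(A_1) \leq \bm \phi(S_{1,1}^{(2)}) < \bm \phi(S_{1,1})$, so the HN chain $\bm \phi(A_2) < \bm \phi(A_1) < \bm \phi(S_{1,1})$ also excludes $A_2 = S_{1,1}$. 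Hence $A_2 = S_{1,0}$, and the remaining orthogonality $\Hom(A_1, S_{1,0}) = 0$ pins $A_1$ down to $\vec{x}_1$-parity $1$.

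Finally, $L/A_1 \cong S_{1,L}^{(k)}$ lives in $\textbf{T}_\infty$, and $A_2$ must coincide with its maximal semistable subobject. Running through the uniserial chain of subobjects of $S_{1,L}^{(k)}$ and the fixed semistable subcategories $\langle S_{1,0}\rangle$, $\langle S_{1,1}^{(2)}\rangle$, $\langle S_{1,1}\rangle$, this maximal subobject equals $S_{1,0}$ precisely when $k = 1$ and $L$ has parity $0$ (so that $S_{1,L} = S_{1,0}$); every other parity/length combination yields $S_{1,1}^{(2r)}$ with $r \geq 1$, contradicting $A_2 = S_{1,0}$. Consequently $\vec{y}_1 = \vec{x}_1$, the HN filtration reduces to $0 \to L(-\vec{x}_1) \to L \to S_{1,L} \to 0$, and $L(-\vec{x}_1)$ is semistable as the first HN factor. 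The main obstacle is this closing case analysis inside $\textbf{T}_\infty$, which relies on the explicit subobject formulas for $S_j^{(t)}$ from Section~4 together with the fixed normalization $\bm \phi(S_{1,0}) < \bm \phi(S_{1,1})$.
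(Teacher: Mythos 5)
Your steps 1 and 2 are fine (and legitimately lean on the Section 4 tube classification plus the normalization $\bm\phi(S_{1,0})<\bm\phi(S_{1,1})$, which the surrounding text makes available), but step 3 has a genuine gap: from $A_1=X_1\cong L(-k\vec{x}_{1})$ you conclude $L/A_1\cong S_{1,L}^{(k)}$. This does not follow. The first HN term $X_1$ is a specific subsheaf of $L$, and the cokernel depends on the embedding, i.e.\ on the chosen element of $\Hom(L(-k\vec{x}_{1}),L)\cong S_{k\vec{x}_{1}}$, not merely on the isomorphism class of $X_1$. Already for $k=2$ this space contains sections such as $x_{2}-\lambda x_{1}^{2}$, whose cokernel is the ordinary simple sheaf $S_{\lambda}$, and a general section of degree $k\vec{x}_{1}$ factors as $x_1^{a}\prod_j(x_2-\lambda_j x_1^2)$, producing a quotient with direct summands in ordinary tubes. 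So $L/X_1$ need be neither supported in $\textbf{T}_\infty$ nor uniserial, and the closing case analysis does not apply as stated. (That analysis is also slightly misstated: for $S_{1,0}^{(2m)}$ and $S_{1,1}^{(2m+1)}$ the maximal semistable subobject is $S_{1,1}$, not some $S_{1,1}^{(2r)}$, although it is still $\neq S_{1,0}$, so this part is only a cosmetic slip.)

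The gap is repairable with what you already have, and the repair makes step 3 unnecessary: the HN condition gives $\Hom(A_1,A_j)=0$ for \emph{every} $j\geq 2$, not only $j=2$. Since $A_1$ is a line bundle, your Hom facts force every indecomposable summand of every torsion factor $A_j$ ($j\geq2$) to be the exceptional simple of parity opposite to $A_1$ (your step 2 identifies it as $S_{1,0}$); hence all these factors lie in the single semistable class $\langle S_{1,0}\rangle=\add S_{1,0}$ and share one phase, and strict decrease of phases forces $n=2$. Then $L/X_1\cong S_{1,0}^{\oplus k'}$, and since $\dim\Hom(L,S_{1,0})\leq 1$ a surjection $L\twoheadrightarrow S_{1,0}^{\oplus k'}$ forces $k'=1$; thus $L/X_1\cong S_{1,0}=S_{1,L}$ and $X_1\cong L(-\vec{x}_{1})$, which is the assertion. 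Note that this repaired argument is essentially the paper's own proof run from the top of the filtration rather than the bottom: the paper works with the minimal semistable quotient $A_n$ of $L$ (and then of $L(-\vec{x}_1)$) and needs neither the classification of semistable torsion classes from Section 4 nor the normalization $\bm\phi(S_{1,0})<\bm\phi(S_{1,1})$, so it is both shorter and less dependent on earlier results than your route.
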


\begin{proof}
Consider the  HN-filtration of $L$
$$\xymatrix {
&\cdots L_{n-2}\ar@{^(->}[r]^-{}&L_{n-1}\ar@{->>}[d]^{}\ar@{^(->}[r]^-{}&
L_{n}=L,\ar@{->>}[d]^{}\\
&&A_{n-1}&A_n
 }
$$
then the epimorphism $L \twoheadrightarrow A_n$ implies $A_n$ is a torsion sheaf. For any $i\leq n-1$, $\Hom(A_{i}, A_n)=0$ implies $\Hom(L_{n-1}, A_n)=0$, so we have $A_n=S_{1,0} $ or $S_{1,1}$, and then $L_{n-1}= L(-\vec{x}_{1})$. If $L_{n-1}$ is not semistable, then $L_{n-2}\not=0$ and $A_{n-1}=S_{1,1} $ or $S_{1,0}$ by similar arguments as above. Note that $A_{n-1}\neq A_{n}$. Hence $\{A_{n-1}, A_{n}\}=\{S_{1,0}, S_{1,1}\}$. So $\Hom(L_{n-2}, S_{1,0})=0=\Hom(L_{n-2}, S_{1,1})$, a contradiction. Hence $L_{n-1}$ is  semistable.
We are done.
\end{proof}

\begin{lem} \label{semistable line bundle b)}
If $L$ and $L(\vec{x}_{1})$ are both semistable line bundles, then
$L(-\vec{x})$ are semistable for all $\vec{x}\geq 0$.
\end{lem}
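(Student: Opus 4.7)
The plan is to proceed by induction on $m\geq 0$, showing at each step that $L(-m\vec{x}_1)$ is semistable; this suffices because every nonnegative element of $\mathbb{L}(2)$ has the form $m\vec{x}_1$ (using $\vec{c}=2\vec{x}_1$). The base case $m=0$ is the hypothesis on $L$. For the inductive step $m\mapsto m+1$, I will assume for contradiction that $L(-(m+1)\vec{x}_1)$ is not semistable. Then Lemma~\ref{semistable line bundle a)} pins down its HN-filtration to be
$$0\to L(-(m+2)\vec{x}_1)\to L(-(m+1)\vec{x}_1)\to S_{1,L(-(m+1)\vec{x}_1)}\to 0,$$
with the strict phase inequality $\bm\phi(L(-(m+2)\vec{x}_1))>\bm\phi(S_{1,L(-(m+1)\vec{x}_1)})$; this is the inequality I aim to contradict.

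The engine of the argument is the chain of canonical short exact sequences
$$0\to L(-(k+1)\vec{x}_1)\to L(-k\vec{x}_1)\to S_{1,L(-k\vec{x}_1)}\to 0,\quad k=m-1,m,m+1,$$
which produces a tower of inclusions $L(-(m+2)\vec{x}_1)\hookrightarrow L(-(m+1)\vec{x}_1)\hookrightarrow L(-m\vec{x}_1)\hookrightarrow L(-(m-1)\vec{x}_1)$, with the last twist reading as $L(\vec{x}_1)$ when $m=0$. Both $L(-m\vec{x}_1)$ and $L(-(m-1)\vec{x}_1)$ are semistable: the former by the inductive hypothesis (or by assumption if $m=0$), and the latter by the inductive hypothesis when $m\ge 1$, or by the standing hypothesis on $L(\vec{x}_1)$ when $m=0$. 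The elementary consequence of Definition~\ref{defn of stab data}(1) that a subobject $Y$ of a semistable $X\in\Pi_\varphi$ satisfies $\bm\phi^+(Y)\le\varphi$ (and dually $\bm\phi^-(Z)\ge\varphi$ for a quotient $Z$), applied along this tower and once more to the quotient $L(-(m-1)\vec{x}_1)\twoheadrightarrow S_{1,L(-(m-1)\vec{x}_1)}$, then yields
$$\bm\phi(L(-(m+2)\vec{x}_1))\le\bm\phi(L(-m\vec{x}_1))\le\bm\phi(L(-(m-1)\vec{x}_1))\le\bm\phi(S_{1,L(-(m-1)\vec{x}_1)}).$$

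The subtle step, and the one I expect to be the main obstacle, is the parity identification $S_{1,L(-(m+1)\vec{x}_1)}=S_{1,L(-(m-1)\vec{x}_1)}$. These two twists differ by $-2\vec{x}_1=-\vec{c}$, and by the defining sequence $0\to\mathcal{O}((j-1)\vec{x}_1)\to\mathcal{O}(j\vec{x}_1)\to S_{1,j}\to 0$ the index of $S_{1,L(-k\vec{x}_1)}$ depends only on the class of the twist in $\mathbb{L}(2)/\langle\vec{c}\rangle\cong\mathbb{Z}/2\mathbb{Z}$; hence the two exceptional simples coincide and in particular have equal phase. Substituting this equality into the inequality chain above gives
$$\bm\phi(S_{1,L(-(m+1)\vec{x}_1)})\ge\bm\phi(L(-(m+2)\vec{x}_1))>\bm\phi(S_{1,L(-(m+1)\vec{x}_1)}),$$
the contradiction that completes the induction. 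Once the parity bookkeeping is set up cleanly the rest of the argument is a routine chase of phase inequalities along the canonical short exact sequences.
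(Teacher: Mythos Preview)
Your proof is correct and follows essentially the same strategy as the paper: assume the next twist down fails to be semistable, invoke Lemma~\ref{semistable line bundle a)} to obtain its HN-filtration, and derive a contradiction from the parity identification of the exceptional simple together with phase inequalities. The paper is slightly more economical---it proves only the single step ``$L$ and $L(\vec{x}_1)$ semistable $\Rightarrow$ $L(-\vec{x}_1)$ semistable'' and then iterates, whereas you carry the full index $m$ through the inductive step---and it phrases the final contradiction as $\bm\phi(L(\vec{x}_1))<\bm\phi(L(-\vec{c}))$ versus $\Hom(L(-\vec{c}),L(\vec{x}_1))\neq 0$ rather than routing through the quotient to the simple, but the content is the same.
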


\begin{proof}
Obviously, it suffices to show that $L(-\vec{x}_{1})$ is semistable. Then the result follows by induction. For contradiction we assume $L(-\vec{x}_{1})$ is not semistable. Then by Lemma \ref{semistable line bundle a)}, the HN-filtration of $L(-\vec{x}_{1})$ is given by: $0\rightarrow L(-\vec{c})\rightarrow L(-\vec{x}_{1})\rightarrow S_{1,L(-\vec{x}_{1})}\rightarrow0$. But $\Hom(L(\vec{x}_{1}), S_{1,L(-\vec{x}_{1})})\not=0$ implies $\bm \phi(L(\vec{x}_{1})) <\bm \phi(S_{1,L(-\vec{x}_{1})}) <\bm \phi(L(-\vec{c}))$, a contradiction to $\Hom(L(-\vec{c}), L(\vec{x}_{1}))\not=0$. We are done.
\end{proof}

For any $m\in\mathbb{Z}$, we denote by
$$\mathbb{L}_{m}:= \{ k\vec{c} \ | \ k\in \mathbb{Z}, k< m  \} \cup \{\vec{x}_{1}+\mathbb{Z}\vec{c} \}$$ for the subset of $\mathbb{L}$.
Then semistable line bundles with respect to a finest stability data $(\Phi,\{\Pi_{\varphi}\}_{\varphi\in\Phi})$ can be described as follows.

\begin{prop}  \label{semistable line bundle in weight type (2)}
For any finest stability data $(\Phi,\{\Pi_{\varphi}\}_{\varphi\in\Phi})$ on $\coh \mathbb{X}(2)$, let $\{\mathcal{O}(\vec{x})\ | \ \vec{x} \in X\}$ be the set of all the semistable line bundles. Then $X=\vec{x}_{1}+\mathbb{Z}\vec{c}, \;\mathbb{L},$ or $\mathbb{L}_{m}$ for some $m\in\mathbb{Z}$.
\end{prop}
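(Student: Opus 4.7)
The plan is to establish two claims, from which the proposition follows by enumeration of cases: (a) every odd line bundle $\mathcal{O}(\vec{x}_{1}+k\vec{c})$ is semistable, and (b) the subset $M := \{k \in \mathbb{Z} \mid \mathcal{O}(k\vec{c}) \in X\}$ is downward-closed in $\mathbb{Z}$. Given (a) and (b), $M$ must be one of $\emptyset$, $\mathbb{Z}$, or $\{k : k < m\}$ for some $m \in \mathbb{Z}$, and the three resulting shapes of $X$ are precisely $\vec{x}_{1}+\mathbb{Z}\vec{c}$, $\mathbb{L}$, and $\mathbb{L}_{m}$.

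For (a), I would argue by contradiction. Suppose $L := \mathcal{O}(\vec{x}_{1}+k\vec{c})$ is not semistable. Lemma \ref{semistable line bundle a)} then pins down its HN-filtration as
\[
0 \to \mathcal{O}(k\vec{c}) \to \mathcal{O}(\vec{x}_{1}+k\vec{c}) \to S_{1,1} \to 0,
\]
which forces $\mathcal{O}(k\vec{c})$ to be semistable with $\bm \phi(\mathcal{O}(k\vec{c})) > \bm \phi(S_{1,1}) > \bm \phi(S_{1,0})$ (the last inequality being our normalization). The crucial extra input comes from running Lemma \ref{semistable line bundle a)} in the contrapositive direction on $\mathcal{O}((k+1)\vec{c})$: if $\mathcal{O}((k+1)\vec{c})$ were not semistable, its HN-filtration would force $\mathcal{O}((k+1)\vec{c})(-\vec{x}_{1}) = L$ to be semistable, contradicting our assumption. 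Hence $\mathcal{O}((k+1)\vec{c})$ is semistable. Applying $\Hom(\mathcal{O}((k+1)\vec{c}), -)$ to the canonical short exact sequence $0 \to \mathcal{O}(\vec{x}_{1}+k\vec{c}) \to \mathcal{O}((k+1)\vec{c}) \to S_{1,0} \to 0$ yields a nonzero morphism $\mathcal{O}((k+1)\vec{c}) \to S_{1,0}$, so $\bm \phi(\mathcal{O}((k+1)\vec{c})) \leq \bm \phi(S_{1,0})$; and the canonical nonzero map $\mathcal{O}(k\vec{c}) \to \mathcal{O}((k+1)\vec{c})$, combined with Theorem \ref{iff cond for finest} (since $\Hom(\mathcal{O}((k+1)\vec{c}), \mathcal{O}(k\vec{c})) = 0$), gives $\bm \phi(\mathcal{O}(k\vec{c})) < \bm \phi(\mathcal{O}((k+1)\vec{c}))$. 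Chaining these inequalities produces
\[
\bm \phi(\mathcal{O}(k\vec{c})) < \bm \phi(\mathcal{O}((k+1)\vec{c})) \leq \bm \phi(S_{1,0}) < \bm \phi(S_{1,1}) < \bm \phi(\mathcal{O}(k\vec{c})),
\]
the desired contradiction.

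For (b), assume $\mathcal{O}(k\vec{c}) \in X$. By (a), $\mathcal{O}(\vec{x}_{1}+k\vec{c}) = \mathcal{O}(k\vec{c})(\vec{x}_{1}) \in X$ as well, so Lemma \ref{semistable line bundle b)} applies with $L = \mathcal{O}(k\vec{c})$ and delivers $\mathcal{O}(k\vec{c} - \vec{y}) \in X$ for every $\vec{y} \geq 0$. Writing $\vec{y}$ in normal form, this records that $\mathcal{O}(j\vec{c}) \in X$ for all $j \leq k$ (and $\mathcal{O}(\vec{x}_{1}+j\vec{c}) \in X$ for $j \leq k-1$, already guaranteed by (a)), so $M$ is downward-closed. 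The principal obstacle is the cyclic chain of phase inequalities in (a); everything else is formal once (a) is in hand. Locating the right pair $(\mathcal{O}(k\vec{c}), \mathcal{O}((k+1)\vec{c}))$ and exploiting both the normalization $\bm \phi(S_{1,0}) < \bm \phi(S_{1,1})$ and the contrapositive of Lemma \ref{semistable line bundle a)} is the subtle step.
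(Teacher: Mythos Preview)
Your proof is correct and follows essentially the same two-step strategy as the paper: first show every odd line bundle $\mathcal{O}(\vec{x}_1+k\vec{c})$ is semistable, then invoke Lemma~\ref{semistable line bundle b)} to conclude that the set of even indices $k$ with $\mathcal{O}(k\vec{c})$ semistable is downward-closed. Your argument for (a) is more elaborate than necessary, however: once you have $\mathcal{O}(k\vec{c})$ semistable with $\bm\phi(\mathcal{O}(k\vec{c}))>\bm\phi(S_{1,0})$, the nonzero map $\mathcal{O}(k\vec{c})\to S_{1,0}$ (since $S_{1,\mathcal{O}(k\vec{c})}=S_{1,0}$) already gives the contradiction directly, so the detour through $\mathcal{O}((k+1)\vec{c})$ and Theorem~\ref{iff cond for finest} can be dropped.
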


\begin{proof}
Recall that we have assumed $\bm{\phi}(S_{1,0})<\bm{\phi}(S_{1,1}).$ We claim that all $\mathcal{O}(\vec{x}_{1}+k\vec{c})$ are semistable for $k\in \mathbb{Z}$.
In fact, assume there exists some $\mathcal{O}(\vec{x}_{1}+k\vec{c})$ which is not semistable, then by Lemma \ref{semistable line bundle a)}, the HN-filtration of $\mathcal{O}(\vec{x}_{1}+k\vec{c})$ is given by $$0\rightarrow \mathcal{O}(k\vec{c}) \rightarrow  \mathcal{O}(\vec{x}_{1}+k\vec{c}) \rightarrow S_{1,1}\rightarrow0.$$
Hence $\bm \phi(\mathcal{O}(k\vec{c}))> \bm \phi(S_{1,1})>\bm \phi(S_{1,0})$. Then  $\Hom(\mathcal{O}(k\vec{c}), S_{1,0})\neq 0$ yields a contradiction. This proves the claim.
It follows that $\vec{x}_{1}+\mathbb{Z}\vec{c}\,\subseteq\, X\subseteq\,\mathbb{L}$.

Assume $X\neq \vec{x}_{1}+\mathbb{Z}\vec{c}$ and $X\neq\mathbb{L}$, then we need to show that $X=\mathbb{L}_{m}$ for some $m\in\mathbb{Z}$.
Note that $\mathbb{L}=\mathbb{Z}\vec{c}\cup\{\vec{x}_{1}+\mathbb{Z}\vec{c}\}$.
Since $X\neq \vec{x}_{1}+\mathbb{Z}\vec{c}$, there exists some $k\in\mathbb{Z}$, such that $\mathcal{O}(k\vec{c})$ is semistable. Since $\mathcal{O}(k\vec{c}+\vec{x}_1)$ is semistable, by Lemma \ref{semistable line bundle b)} we obtain that $\mathcal{O}(\vec{x})$ is semistable for any $\vec{x} \leq k\vec{c}$. By assumption $X\neq\mathbb{L}$, then there exists a minimal $m$, such that $\mathcal{O}(m\vec{c})$ is not semistable. It follows that $\mathcal{O}((m-1)\vec{c})$ is semistable and $\mathcal{O}(k\vec{c})$ is not semistable for any $k\geq m$. That is, $X=\mathbb{L}_{m}$. We are done.
\end{proof}

\subsection{\rm{\textbf{Finest stability data and torsion pairs on $\coh  {\mathbb{X}(2)}$}}}

Now we are in the position to give a classification of finest stability data on $\coh\mathbb{X}(2)$. For this let's introduce some linearly ordered sets as below.

Note that $\mathbb{L}$ is a linearly ordered set in the natural way, i.e., $\vec{x}\leq\vec{y}$ if and only if $\vec{y}-\vec{x} \geq 0$ in $\mathbb{L}$. Then the subsets $\mathbb{L}_{m} \, (m\in\mathbb{Z})$ and $\vec{x}_{1}+\mathbb{Z}\vec{c}$ inherit the linearly ordered relations of $\mathbb{L}$ naturally.
Set  $$\widetilde{\mathbb{X}}=\{ (\infty,  0), (\infty,  \frac{1}{2}), (\infty,  1),  {\textbf{\textit{x}}}  \ | \ {\textbf{\textit{x}}}\in \mathbb{P}^{1} \setminus \{\infty\} \}. $$ Then $\widetilde{\mathbb{X}}$ can be viewed as a linearly ordered set by fixing any choice of ordering on the elements of $\widetilde{\mathbb{X}}$ satisfying $(\infty,  0)< (\infty,  \frac{1}{2})< (\infty,  1)$.

By taking union of two ordered sets and composing some new relations between them, we can obtain the following three family of linearly ordered sets, depending on the choices of ordering on the set $\widetilde{\mathbb{X}}$:

\begin{itemize}
  \item  $\mathbb{L}\cup \widetilde{\mathbb{X}}$: by composing $\vec{x}<{\textbf{\textit{z}}}$ for any $\vec{x} \in\mathbb{L}$ and ${\textbf{\textit{z}}}\in\widetilde{\mathbb{X}}$.

\item  $\{\vec{x}_{1}+\mathbb{Z}\vec{c}\}\cup \widetilde{\mathbb{X}}$: by composing $(\infty, 0) < \vec{y}<{\textbf{\textit{z}}}$ for any $\vec{y}
\in \vec{x}_{1}+\mathbb{Z}\vec{c}$ and ${\textbf{\textit{z}}}\in\widetilde{\mathbb{X}}\setminus \{ (\infty,  0) \}$.

\item  $\mathbb{L}_{m}\cup \widetilde{\mathbb{X}}$: by composing $(m-1)\vec{c}< (\infty,  0) < \vec{x}_{1}+(m-1)\vec{c}$, and $\vec{y}<{\textbf{\textit{z}}}$  for any $\vec{y} \in\mathbb{L}_{m}$ and ${\textbf{\textit{z}}}\in\widetilde{\mathbb{X}}\setminus \{ (\infty,  0) \}$.
\end{itemize}

The main result of this subsection is as follows, which gives a classification of finest stability data of $\coh \mathbb{X}(2)$ under the assumption $\bm{\phi}(S_{1,0})<\bm{\phi}(S_{1,1})$.

\begin{prop} \label{Finest stab data in weight type (2)} Keep notations as above. For any $\varphi\in\mathbb{L}\cup\widetilde{\mathbb{X}}$, define the subcategories $\Pi_{\varphi}\subseteq\coh  {\mathbb{X}(2)}$ as below:
\begin{itemize}
  \item  $\Pi_{\vec{x}}= \langle \mathcal{O}(\vec{x}) \rangle, \vec{x} \in \mathbb{L} $;
  \item  $\Pi_{(\infty,  0)}= \langle S_{1,  0} \rangle $; \quad $\Pi_{(\infty,  \frac{1}{2})}= \langle S_{1,  1}^{(2)} \rangle $;\quad  $\Pi_{(\infty,  1)}= \langle S_{1,  1} \rangle $;
   \item  $\Pi_{{\textbf{\textit{x}}}}= \langle S_{{\textbf{\textit{x}}}} \rangle,  {\textbf{\textit{x}}}\in \mathbb{P}^{1} \setminus \{\infty\} $.
\end{itemize}
Then there are only three types of the finest stability data $(\Phi,\{\Pi_{\varphi}\}_{\varphi\in\Phi})$ on $\coh \mathbb{X}(2)$, where $\Phi=\mathbb{L}\cup \widetilde{\mathbb{X}}$, $\mathbb{L}_{m}\cup \widetilde{\mathbb{X}}$ or $\{\vec{x}_{1}+\mathbb{Z}\vec{c}\}\cup \widetilde{\mathbb{X}}$ respectively.
\end{prop}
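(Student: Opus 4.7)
The plan is to dissect a finest stability data $(\Phi,\{\Pi_\varphi\}_{\varphi\in\Phi})$ into its torsion and vector bundle parts, use Proposition \ref{semistable line bundle in weight type (2)} together with the tube analysis from Section 4 to identify the semistable subcategories, and then pin down the ordering on $\Phi$ via Hom-relations and HN-filtrations.

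First, by Theorem \ref{iff cond for finest} and the vanishing $\Hom({\rm coh}_0\mathbb{X},\vect\mathbb{X})=0$, every $\Pi_\varphi$ is contained either in $\vect\mathbb{X}$ or in a single connected tube of ${\rm coh}_0\mathbb{X}$. Since each line bundle is exceptional and $\Hom(\mathcal{O}(\vec{x}),\mathcal{O}(\vec{y}))\neq 0$ only when $\vec{y}-\vec{x}\geq 0$, Theorem \ref{iff cond for finest} forces $\Pi_\varphi=\langle\mathcal{O}(\vec{x})\rangle$ for each semistable $\vec{x}$, and Proposition \ref{semistable line bundle in weight type (2)} restricts the set of such $\vec{x}$ to one of $\mathbb{L}$, $\mathbb{L}_m$, or $\vec{x}_1+\mathbb{Z}\vec{c}$. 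The tube analysis of Section 4 applied to each tube of ${\rm coh}_0\mathbb{X}$ gives $\Pi_\varphi=\langle S_{\textbf{\textit{x}}}\rangle$ at each ordinary point ${\textbf{\textit{x}}}\in\mathbb{P}^1\setminus\{\infty\}$, and at $\infty$ leaves exactly the three candidates $\langle S_{1,0}\rangle$, $\langle S_{1,1}\rangle$, $\langle S_{1,1}^{(2)}\rangle$ (the choice $S_{1,1}^{(2)}$ rather than $S_{1,0}^{(2)}$ being forced by Lemma \ref{n height semistable objects in tube} and the hypothesis $\bm\phi(S_{1,0})<\bm\phi(S_{1,1})$, since otherwise the sequence $0\to S_{1,1}\to S_{1,0}^{(2)}\to S_{1,0}\to 0$ would require $\bm\phi(S_{1,0}^{(2)})$ to lie strictly between $\bm\phi(S_{1,1})$ and $\bm\phi(S_{1,0})$, an empty interval).

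Second, the ordering on $\Phi$ is largely forced. Line bundles are totally ordered by $\mathbb{L}$ via the Hom pattern; each line bundle maps non-trivially to every ordinary simple $S_{\textbf{\textit{x}}}$ and to exactly one exceptional simple $S_{1,L}$, placing each semistable line bundle strictly below any torsion piece it maps into. At $\infty$ one reads off $\bm\phi(S_{1,0})<\bm\phi(S_{1,1}^{(2)})<\bm\phi(S_{1,1})$ from the short exact sequence $0\to S_{1,0}\to S_{1,1}^{(2)}\to S_{1,1}\to 0$. Between distinct ordinary simples, and between ordinary and exceptional simples, no Hom-constraints arise, which accounts for the otherwise unrestricted ordering on $\widetilde{\mathbb{X}}$ subject only to $(\infty,0)<(\infty,1/2)<(\infty,1)$.

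Finally, split into the three cases of Proposition \ref{semistable line bundle in weight type (2)}. If $X=\mathbb{L}$, every line bundle is semistable and sits below all torsion pieces, yielding $\Phi=\mathbb{L}\cup\widetilde{\mathbb{X}}$. If $X=\vec{x}_1+\mathbb{Z}\vec{c}$, every $\mathcal{O}(k\vec{c})$ is non-semistable and Lemma \ref{semistable line bundle a)} gives the HN-filtration $0\to\mathcal{O}(\vec{x}_1+(k-1)\vec{c})\to\mathcal{O}(k\vec{c})\to S_{1,0}\to 0$, forcing $\bm\phi(S_{1,0})<\bm\phi(\mathcal{O}(\vec{x}_1+(k-1)\vec{c}))$ for every $k$ and making $(\infty,0)$ the minimum element of $\Phi$, so $\Phi=\{\vec{x}_1+\mathbb{Z}\vec{c}\}\cup\widetilde{\mathbb{X}}$. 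If $X=\mathbb{L}_m$, the same HN-filtration applied for $k\geq m$ combined with the surjection $\mathcal{O}((m-1)\vec{c})\twoheadrightarrow S_{1,0}$ inserts $(\infty,0)$ strictly between $(m-1)\vec{c}$ and $\vec{x}_1+(m-1)\vec{c}$, yielding $\Phi=\mathbb{L}_m\cup\widetilde{\mathbb{X}}$. Conversely, each candidate $(\Phi,\{\Pi_\varphi\})$ is verified to be a valid stability data by reading off Hom-orthogonality from the explicit Hom computations and constructing HN-filtrations via Lemma \ref{semistable line bundle a)}, the tube analysis of Section 4, and the splitting torsion pair $({\rm coh}_0\mathbb{X},\vect\mathbb{X})$; finestness then follows from Theorem \ref{iff cond for finest}. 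The main obstacle is the bookkeeping in case $X=\mathbb{L}_m$, where one must confirm that the forced placement of $S_{1,0}$ between two consecutive semistable line bundles is compatible with all other Hom-relations — in particular with $S_{1,1}^{(2)}$ and $S_{1,1}$, which are constrained to lie strictly above $S_{1,0}$ but whose position relative to the remaining line bundles must be checked to be free.
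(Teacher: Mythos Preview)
Your proposal is correct and follows essentially the same approach as the paper: both directions rest on Proposition~\ref{semistable line bundle in weight type (2)} for the trichotomy of semistable line bundles, the tube analysis of Section~4 (Lemmas~\ref{n height semistable objects in tube} and~\ref{non semistable objects}) for the torsion part at $\infty$, and Theorem~\ref{iff cond for finest} for finestness, with the paper writing out the HN-filtrations explicitly for one case where you sketch them. One small clarification on your closing remark: in the $\mathbb{L}_m$ case the position of $S_{1,1}^{(2)}$ and $S_{1,1}$ relative to the semistable line bundles is \emph{not} free but forced above all of them, since every $\mathcal{O}(\vec{x}_1+k\vec{c})$ surjects onto $S_{1,1}$ and maps nontrivially to $S_{1,1}^{(2)}$; this matches the paper's definition of the order on $\mathbb{L}_m\cup\widetilde{\mathbb{X}}$.
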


\begin{proof}
 We first prove that $(\Phi,\{\Pi_{\varphi}\}_{\varphi\in\Phi})$ is a finest stability data for $\Phi=\{\vec{x}_{1}+\mathbb{Z}\vec{c}\}\cup \widetilde{\mathbb{X}}$. Indeed, it is easy to see that $\Hom(\Pi_{\varphi'}, \Pi_{\varphi''})=0$ for any $\varphi'>\varphi''\in\Phi$. Note that $\langle S_{1,j}^{(2)}\rangle=\add\{S_{1,j}^{(2n)}, n\geq 1\}$ for $j=0,1$. Hence all the indecomposable sheaves which do not belong to any $\Pi_{\varphi}$ are given by
\begin{itemize}
    \item [-] $\co(k\vec{c}),\; k\in\mathbb{Z}$;
    \item [-] $S^{(2n+1)}_{1,1}, \;n\geq 1;$ \quad $S^{(2n+1)}_{1,0}, \;n\geq 1$; \quad $S^{(2n+2)}_{1,0}, \;n\geq 0$.
\end{itemize}
They admit HN-filtrations of the following forms:
\begin{equation*}
{\footnotesize\xymatrix { \co(\vec{x}_{1}+(k-1)\vec{c})\ar@{^(->}[r]^-{} &\co(k\vec{c})\ar@{->>}[d]^{} && &&S_{1,1}^{}\ar@{^(->}[r]^-{}&S^{(2n+1)}_{1,1}\ar@{->>}[d]^{}&&\\
&S_{1,0}&&&& &S^{(2n)}_{1,1} &&}}
\end{equation*}

\begin{equation*}
\xymatrix {
&&&S_{1,1}^{(2n)} \ar@{^(->}[r]^-{}&S^{(2n+1)}_{1,0}\ar@{->>}[d]^{}&&& S_{1,1} \ar@{^(->}[r]^-{}&S_{1,1}^{(2n+1)}\ar@{->>}[d]^{}\ar@{^(->}[r]^-{}&S^{(2n+2)}_{1,0}.
\ar@{->>}[d]^{} &&\\
&&&&S^{}_{1,0} &&&&S_{1,1}^{(2n)}&S^{}_{1,0}&&\\
}
\end{equation*}
Hence $(\Phi,\{\Pi_{\varphi}\}_{\varphi\in\Phi})$ is a stability data on $\coh \mathbb{X}(2)$. Observe that each semistable subcategory is generated by an indecomposable sheaf. It is easy to see that $\Hom(X,Y)\neq 0\neq \Hom(Y,X)$ for any $\varphi\in\Phi$ and any non-zero $X,Y\in\Pi_{\varphi}$. According to Theorem \ref{iff cond for finest}, the stability data $(\Phi,\{\Pi_{\varphi}\}_{\varphi\in\Phi})$ is finest.

Similarly, one can prove that $(\Phi,\{\Pi_{\varphi}\}_{\varphi\in\Phi})$ is a finest stability data on $\coh \mathbb{X}(2)$ for $\Phi=\mathbb{L}\cup \widetilde{\mathbb{X}}$ or $\mathbb{L}_{m}\cup \widetilde{\mathbb{X}}$.

On the other hand, for any finest stability data $(\Phi,\{\Pi_{\varphi}\}_{\varphi\in\Phi})$ on $\coh \mathbb{X}(2)$, we know that the simple sheaves are stable since they don't have non-trivial subsheaves. Up to degree shift, we can assume $\bm \phi(S_{1,  0})<\bm \phi(S_{1,  1})$. Hence $S_{1,1}^{(2)}$ is semistable and $S_{1,0}^{(2)}$ is not semistable. It follows that $S_{1,1}^{(2n)}$ is semistable with $\bm{\phi}(S_{1,0})<\bm{\phi}(S^{(2)}_{1,1})=\bm{\phi}(S^{(2n)}_{1,1})<\bm{\phi}(S_{1,1})$ and $S_{1,0}^{(2n)}$ is not semistable for any $n\geq 1$. According to Lemma \ref{non semistable objects}, we know that
$S_{1,j}^{(2n+1)}$ is not semistable for any $n\geq 1$ and $j=0,1$. Therefore, the semistable sheaves in the subcategory ${\rm{coh}}_0\mathbb{X}(2)$ are characterized by the linearly ordered set $\widetilde{\mathbb{X}}$.
Moreover, according to Proposition \ref{semistable line bundle in weight type (2)}, the semistable line bundles are precisely characterized by one of the three linearly ordered sets $\vec{x}_{1}+\mathbb{Z}\vec{c}$, $\mathbb{L}$ or
$\mathbb{L}_{m}\,(m\in\mathbb{Z})$.
Note that ${\rm{Hom}}(\Pi _{\varphi'},\Pi_{\varphi''})=0$ for all $\varphi'>\varphi''$ in $\Phi$. Then the finest stability data on $\coh \mathbb{X}(2)$ are exhausted by the above three types.

This finishes the proof.
\end{proof}

\begin{rem} For the classical projective line or an arbitrary smooth elliptic curve, any finest stability data on its category of coherent sheaves is a refinement of the slope stability data by Propositions \ref{refine from slope for P1} and \ref{refine from slope  for T}, respectively. But this is not the case for weighted projective lines. In fact, the finest stability data $(\Phi,\{\Pi_{\varphi}\}_{\varphi\in\Phi})$ on $\coh \mathbb{X}(2)$ for $\Phi=\{\vec{x}_{1}+\mathbb{Z}\vec{c}\}\cup \widetilde{\mathbb{X}}$ is not finer than the slope stability data, since $\bm{\phi}(S_{1,0})=(\infty, 0) <\vec{x}_{1}+k\vec{c}=\bm{\phi}(\mathcal{O}(\vec{x}_{1}+k\vec{c}))$ for any $k\in\mathbb{Z}$, but $\mu(S_{1,0})>\mu(\mathcal{O}(\vec{x}_{1}+k\vec{c}))$.
\end{rem}

As an application of Proposition \ref{Finest stab data in weight type (2)}, we can obtain a classification of torsion pairs in $\coh \mathbb{X}(2)$.

\begin{prop} \label{torsion pairs on wpl (2)}
Up to degree shift, the non-trivial torsion pairs $(\cal{T}, \cal{F})$  in $\coh \mathbb{X}(2)$  are classified as below, where $P$ is a non-empty subset of $\mathbb{P}^{1}$ and $Q$ is a (possibly empty) subset of $\mathbb{P}^{1} \setminus \{\infty\}$.
\end{prop}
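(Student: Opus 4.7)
The plan is to apply Proposition \ref{torsion pair construction} in conjunction with Proposition \ref{finest for wpl abelian}: every torsion pair $(\cal{T}, \cal{F})$ in $\coh \X(2)$ arises from a finest stability data $(\Phi, \{\Pi_\varphi\}_{\varphi \in \Phi})$ via a decomposition $\Phi = \Phi_- \sqcup \Phi_+$ with $\Phi_- < \Phi_+$, where $\cal{T} = \langle \Pi_\varphi \mid \varphi \in \Phi_+\rangle$ and $\cal{F} = \langle \Pi_\varphi \mid \varphi \in \Phi_-\rangle$. So it suffices to enumerate the admissible cuts in each of the three families of finest stability data classified in Proposition \ref{Finest stab data in weight type (2)}, under the normalization $\bm{\phi}(S_{1,0}) < \bm{\phi}(S_{1,1})$, which accounts for the phrase ``up to degree shift''.

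Next I would split the cuts according to whether $\Phi_+$ meets the line-bundle part $X \in \{\mathbb{L},\ \vec{x}_1+\mathbb{Z}\vec{c},\ \mathbb{L}_m\}$ of $\Phi$ or is contained in the torsion-sheaf part $\widetilde{\X}$. A cut inside $\widetilde{\X}$ forces all semistable line bundles into $\cal{F}$, so $\cal{T}$ is generated purely by simple torsion sheaves; since the order on $\widetilde{\X}$ is free subject only to $(\infty, 0) < (\infty,\tfrac12) < (\infty, 1)$, the resulting torsion classes are parameterized by an arbitrary non-empty subset $P \subseteq \mathbb{P}^1$, where the three distinguished phases at $\infty$ contribute a (truncated) combination of $S_{1,0}$, $S_{1,1}^{(2)}$, and $S_{1,1}$ to $\cal{T}$. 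A cut that intersects the line-bundle part, on the other hand, produces a torsion pair whose torsion class contains a ``tail'' $\{\co(\vec{y}) \mid \vec{y} \geq \vec{x}\}$ of line bundles together with (almost) all simple torsion sheaves, the freedom in the complementary subset giving the parameter $Q \subseteq \mathbb{P}^1 \setminus \{\infty\}$. Collecting these two scenarios over the three families of finest stability data should reproduce exactly the two families in the proposed table.

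The main obstacle will be the bookkeeping needed to prove completeness and non-redundancy. I expect this to boil down to a finite case analysis at the exceptional point $\infty$: within $\widetilde{\X}$ the cut may fall before $(\infty, 0)$, between $(\infty,0)$ and $(\infty,\tfrac12)$, between $(\infty,\tfrac12)$ and $(\infty,1)$, or after $(\infty,1)$, and in the third family of finest stability data the cut can additionally pass through the position where $(\infty, 0)$ is interpolated between $(m-1)\vec{c}$ and $\vec{x}_1 + (m-1)\vec{c}$. For each configuration I would identify the explicit generators of $\cal{T}$ and $\cal{F}$ and check that the resulting pair matches one of the two rows in the table with a unique $(P,Q)$. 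Conversely, each pair in the table is automatically a torsion pair by Proposition \ref{torsion pair construction}, so no separate verification of the torsion-pair axioms is needed.
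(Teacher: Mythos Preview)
Your overall strategy is exactly the paper's: invoke Proposition \ref{finest for wpl abelian} and Proposition \ref{torsion pair construction} to reduce to cuts of the finest stability data from Proposition \ref{Finest stab data in weight type (2)}, then enumerate. The paper's own argument is in fact terser than yours---it simply cites Corollary \ref{torsion pair determined by phi} to produce the pairs $(\Pi_{\geq\psi},\Pi_{<\psi})$ and $(\Pi_{>\psi},\Pi_{\leq\psi})$ and asserts these exhaust Table \ref{table 4}.

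One point of bookkeeping in your sketch is off, however. You say that a cut intersecting the line-bundle part leaves ``freedom in the complementary subset giving the parameter $Q\subseteq \mathbb{P}^1\setminus\{\infty\}$''. In fact there is no such freedom: in all three families the ordering on $\Phi$ places every element of $\widetilde{\X}\setminus\{(\infty,0)\}$ strictly above every line bundle, so once $\Phi_+$ contains a line bundle it contains all of $\widetilde{\X}\setminus\{(\infty,0)\}$. This is why rows IV, V, VI carry no $Q$-parameter. The parameter $Q$ appears only in rows II and III, which arise from cuts lying \emph{inside} $\widetilde{\X}$, specifically between $(\infty,\tfrac12)$ and $(\infty,1)$ or between $(\infty,0)$ and $(\infty,\tfrac12)$; the free ordering on the ordinary points $x\in\mathbb{P}^1\setminus\{\infty\}$ relative to the triple $(\infty,0)<(\infty,\tfrac12)<(\infty,1)$ is what produces $Q$. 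Row I similarly comes from cuts in $\widetilde{\X}$ where the triple at $\infty$ lies entirely on one side. Once you reassign the parameters this way, your case analysis goes through and matches the six rows.
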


\begin{table}[h]
\caption{Non-trivial torsion pairs $(\cal{T},\cal{F})$ in $\coh \mathbb{X}(2)$}
\begin{tabular}{|c|c|c|c|c|}
  \hline
 \makecell*[c]{$(\cal{T}, \cal{F})$}&$\cal{T}$&$\cal{F}$\\
\hline
 \makecell*[c]{$I$}&$\langle S_{{\textbf{\textit{x}}}} \mid  {\textbf{\textit{x}}}\in P  \rangle$&$  \vect \mathbb{X}(2) \cup \langle S_{{\textbf{\textit{x}}}} \mid  {\textbf{\textit{x}}}\not\in P \rangle  $ \\
  \hline
 \makecell*[c]{$II$}&$\langle S_{1,1}, S_{{\textbf{\textit{x}}}} \mid  {\textbf{\textit{x}}}\in Q \rangle$&$ \vect \mathbb{X}(2) \cup \langle S_{1,0}, S^{(2)}_{1,1},  S_{{\textbf{\textit{x}}}} \mid  {\textbf{\textit{x}}}\not\in Q \rangle   $\\
  \hline
 \makecell*[c]{$III$}&$\langle S^{(2)}_{1,1}, S_{1,1}, S_{{\textbf{\textit{x}}}} \mid  {\textbf{\textit{x}}}\in Q\rangle$&$ \vect \mathbb{X}(2) \cup \langle  S_{1,0}, S_{{\textbf{\textit{x}}}} \mid  {\textbf{\textit{x}}}\not\in Q \rangle  $ \\
  \hline
 \makecell*[c]{$IV$}&$\langle \mathcal{O}(\vec{y}) \mid  \vec{y}\geq \vec{0} \rangle  \cup {\rm{coh}}_0 \mathbb{X}(2)$&$\langle \mathcal{O}(\vec{y}) \mid \vec{y}< \vec{0}\rangle$\\
  \hline
 \makecell*[c]{$V$}&$\langle \mathcal{O}(\vec{x}_{1}+ t\vec{c}),  S^{(2)}_{1,1}, S_{1,1}, S_{{\textbf{\textit{x}}}} \mid   t\geq 0, {\textbf{\textit{x}}}\in \mathbb{P}^{1}  \rangle$&$\langle  \mathcal{O}( \vec{x}), S^{}_{1,0} \mid  \vec{x} < \vec{x}_{1} \rangle   $ \\
  \hline
 \makecell*[c]{$VI$}&$\langle \mathcal{O}( \vec{x}_{1}+t\vec{c}),  S^{(2)}_{1,1}, S_{1,1}, S_{{\textbf{\textit{x}}}} \mid  t\in \mathbb{Z}, {\textbf{\textit{x}}}\in \mathbb{P}^{1} \rangle $&$\langle  S_{1,0}\rangle$\\
  \hline

 \end{tabular}
 \label{table 4}
\end{table}

\begin{proof}
For any finest stability data $(\Phi, \{\Pi_{\varphi}\}_{\varphi\in\Phi})$ on $\coh \mathbb{X}(2)$, the simple sheaves $S_{1,0}$ and $S_{1,1}$ are stable. Up to degree shift, we can assume $$\bm{\phi}(S_{1,0})<\bm{\phi}(S_{1,1}).$$ According to Corollary \ref{torsion pair determined by phi}, the following subcategories
$$\Pi_{\geq\psi}=\langle\Pi_{\varphi}\mid\varphi\geq \psi \rangle,\quad\Pi_{<\psi}=\langle\Pi_{\varphi}\mid\varphi<\psi\rangle;$$
$$\Pi_{>\psi}=\langle\Pi_{\varphi}\mid\varphi> \psi \rangle,\quad\Pi_{\leq\psi}=\langle\Pi_{\varphi}\mid\varphi\leq\psi\rangle$$
give two torsion pairs $(\Pi_{\geq\psi}, \Pi_{<\psi})$ and $(\Pi_{>\psi}, \Pi_{\leq\psi})$ in $\coh \mathbb{X}(2)$. These torsion pairs, besides the trivial ones, i.e., $(0, \coh \mathbb{X}(2))$ and $(\coh \mathbb{X}(2), 0)$, have the forms in Table \ref{table 4} due to Proposition \ref{Finest stab data in weight type (2)} (up to degree shift), where $P$ is a non-empty subset of $\mathbb{P}^{1}$, and $Q$ is a (possibly empty) subset of $\mathbb{P}^{1} \setminus \{\infty\}$.

On the other hand, by Proposition \ref{finest for wpl abelian}, each stability data on $\coh \mathbb{X}(2)$ can be refined to a finest one. Then by Proposition \ref{torsion pair construction}, the classification in Table 6 is complete.
\end{proof}

\begin{rem} The torsion pairs in the category $\coh \mathbb{X}$ of coherent sheaves for domestic and tubular weighted projective lines $\mathbb{X}$ have been described in \cite{CS}, via certain bounded t-structures $(\mathcal{D}'^{\leq 0}, \mathcal{D}'^{\geq 0})$ with $\mathcal{D}^{\leq -1} \subset \mathcal{D}'^{\leq 0} \subset \mathcal{D}^{\leq 0}$ on the bounded derived category $\mathcal{D}^{b}(\coh \mathbb{X})$, where $(\mathcal{D}^{\leq 0}, \mathcal{D}^{\geq 0})$ is the standard bounded t-structure on $\mathcal{D}^{b}(\coh \mathbb{X})$ with heart $\coh \mathbb{X}$.
More precisely, for weighted projective line of weight type $(2)$, there are two different types of non-trivial torsion pairs $(\cal{T}, \cal{F})$:

(1) $(\cal{T}, \cal{F})$ is a tilting torsion pair, corresponding to $(\mathcal{D}'^{\leq 0}, \mathcal{D}'^{\geq 0})$ with finite length heart;

(2) $\cal{T} \subset {\rm{coh}}_0 \mathbb{X}(2)$ or $\cal{F} \subset {\rm{coh}}_0 \mathbb{X}(2)$, corresponding to $(\mathcal{D}'^{\leq 0}, \mathcal{D}'^{\geq 0})$ not having finite length heart.

In fact, up to degree shift, the torsion pairs of the second type correspond to $I, II, III$ or $VI$, while the tilting torsion pairs correspond to $IV$ or $V$ in Table \ref{table 4}. Moreover, the tilting torsion pair IV is induced by the canonical tilting bundle $\mathcal{O}\oplus\mathcal{O}(\vec{x}_1)\oplus\mathcal{O}(\vec{c})$, and the tilting torsion pair V is induced by the tilting sheaf $\mathcal{O}(\vec{x}_{1})\oplus\mathcal{O}(\vec{x}_{1}+ \vec{c})\oplus S_{1,1}$.
\end{rem}

\bigskip

{\bf Acknowledgments.}
This work was supported by the National Natural Science Foundation of China (Nos. 12271448, 11971398), the Natural Science Foundation of Fujian Province (No. 2022J01034), Natural Science Foundation of Xiamen (No. 3502Z20227184) and the Fundamental Research Funds for Central Universities of
China (No. 20720220043).

\end{document}